\newtheorem{thm}{Theorem}[section]
\newtheorem{lemma}[thm]{Lemma}
\newtheorem{cor}[thm]{Corollary}
\newtheorem{fact}[thm]{Fact}
\newtheorem{factt}{Fact}
\newtheorem{claim}{Claim}[thm]
\newtheorem{mainthm}{Theorem}
\theoremstyle{definition}
\newtheorem{q}[thm]{Question}
\newtheorem{defn}[thm]{Definition}
\newtheorem{defi}[factt]{Definition}
\newtheorem{remark}[thm]{Remark}
\DeclareMathOperator{\CG}{CG}
\DeclareMathOperator{\ad}{AD}
\DeclareMathOperator{\cf}{cf}
\DeclareMathOperator{\ns}{NS}
\DeclareMathOperator{\im}{Im}
\DeclareMathOperator{\reg}{Reg}
\DeclareMathOperator{\nacc}{nacc}
\DeclareMathOperator{\acc}{acc}
\DeclareMathOperator{\dom}{dom}
\DeclareMathOperator{\otp}{otp}
\DeclareMathOperator{\ps}{Ps}
\DeclareMathOperator{\onto}{{\sf onto}}
\newcommand\axiomfont[1]{\textsf{\textup{#1}}}
\newcommand\zfc{\axiomfont{ZFC}}
\newcommand\gch{\axiomfont{GCH}}
\newcommand\sch{\axiomfont{SCH}}
\newcommand\s{\subseteq}
\newcommand\br{\blacktriangleright}
\renewcommand\mid{\mathrel{|}\allowbreak}
\renewcommand\restriction{\mathbin\upharpoonright}
\newenvironment{why}[1][Proof]{\proof[#1]\mbox{}}{\endproof}
\title[Ladder systems and cmc spaces]{Diamond on ladder systems and countably metacompact topological spaces}
\date{Preprint as of January 14, 2024. For the latest version, visit \textsf{http://p.assafrinot.com/63}.}
\author {Rodrigo Carvalho}
\address{Department of Mathematics, Bar-Ilan University, Ramat-Gan 5290002, Israel.}
\email{rodrigo.rey.carvalho@gmail.com}
\author{Tanmay Inamdar}
\address{Department of Mathematics, Ben-Gurion University of the Negev, P.O.B. 653, Be’er Sheva, 84105 Israel}
\email{tci.math@protonmail.com}
\author {Assaf Rinot}
\address{Department of Mathematics, Bar-Ilan University, Ramat-Gan 5290002, Israel.}
\urladdr{http://www.assafrinot.com}
\keywords{ZFC combinatorics, ladder system, middle diamond, countably metacompact, $\Delta$-space, $\Psi$-space.}
\subjclass[2010]{Primary 54G20. Secondary 03E05}
\begin{document}
\begin{abstract}
The property of countable metacompactness of a topological space gets its importance from Dowker's 1951 theorem that
the product of a normal space $X$ with the unit interval $[0,1]$ is again normal iff $X$ is countably metacompact.
In a recent paper, Leiderman and Szeptycki studied $\Delta$-spaces,
which are a subclass of the class of countably metacompact spaces.
They proved that a single Cohen real introduces a ladder system $ L$ over the first uncountable cardinal
for which the corresponding space $X_{ L}$ is not a $\Delta$-space, and asked whether there is a ZFC example of a ladder system $ L$ over some cardinal $\kappa$ for which $X_{ L}$ is not countably metacompact,
in particular, not a $\Delta$-space.
We prove that an affirmative answer holds for the cardinal $\kappa=\cf(\beth_{\omega+1})$.
Assuming $\beth_\omega=\aleph_\omega$, we get an example at a much lower cardinal, namely $\kappa=2^{2^{2^{\aleph_0}}}$,
and our ladder system $L$ is moreover $\omega$-bounded.
\end{abstract}

\maketitle

\section{Introduction}

Throughout, $\kappa$ denotes a regular uncountable cardinal.
A \emph{ladder system} over a stationary subset $S$ of $\kappa$ is a sequence ${\vec L}=\langle A_\delta\mid\delta\in S\rangle$ such that each $A_\delta$ is a cofinal subset of $\delta$.
It is \emph{$\xi$-bounded} iff $\otp(A_\delta)\le\xi$ for all $\delta\in S$.
The corresponding topological space $X_{\vec L}$ has underlying set $(\kappa\times\{0\})\cup(S\times\{1\})$ with all points in $(\kappa\times\{0\})$ being isolated
and, for every $\delta\in S$, the neighborhoods of $(\delta,1)$ consisting of sets of the form $(A\times\{0\})\cup\{(\delta,1)\}$ for some $A$ an end segment of $A_\delta$.

A topological space $X$ is a \emph{$\Delta$-space} (resp.~\emph{countably metacompact}) iff
for very decreasing sequence $\langle D_n\mid n<\omega\rangle$ of subsets of $X$ (resp.~closed subsets of $X$) with empty intersection,
there is a decreasing sequence $\langle U_n\mid n<\omega\rangle$ of open subsets of $X$ with empty intersection
such that $D_n\s U_n$ for all $n<\omega$.

It is well-known that the product of two normal topological spaces need not be normal,
but what about the product of a normal space $X$ and the unit interval $[0,1]$?
It is a classical theorem of Dowker \cite{dowker} that the product $X\times[0,1]$ is again normal iff $X$ is countably metacompact,
hence the importance of this notion.
The notion of a $\Delta$-space is due to Knight \cite{MR1196219}.

In a recent paper by Leiderman and Szeptycki \cite{leiderman2023deltaspaces}, a systematic study of $\Delta$-spaces is carried out, motivated by the $C_p$-theory of such spaces (see \cite[Theorem 2.1]{MR4214339}).
Section~5 of \cite{leiderman2023deltaspaces} is dedicated to the study of spaces of the form $X_{\vec L}$. It is proved that in $\zfc$
there is an $\omega$-bounded ladder system ${\vec L}$ over $\aleph_1$ for which $X_{\vec L}$ is countably metacompact,
that under Martin's axiom all $\omega$-bounded ladder systems ${\vec L}$ over $\aleph_1$ satisfy that $X_{\vec L}$ is countably metacompact,
and that in the forcing extension after adding a single Cohen real,
there exists an $\omega$-bounded ladder system ${\vec L}$ over $\aleph_1$ for which the space $X_{\vec L}$ is not a $\Delta$-space.
At the end of that section, Problem~5.11 asks whether there is a $\zfc$ example of a ladder system ${\vec L}$ over some cardinal $\kappa$ whose corresponding space $X_{\vec L}$ is not countably metacompact,
hence not a $\Delta$-space. We answer this question in the affirmative, as follows.
\begin{mainthm}\label{thma}
For $\kappa:=\cf(\beth_{\omega+1})$ there are co-boundedly many regular cardinals $\mu<\beth_\omega$ such that $E^\kappa_\mu:=\{\delta<\kappa\mid\cf(\delta)=\mu\}$
carries a $\mu$-bounded ladder system ${\vec L}$ such that  $X_{\vec L}$ is not countably metacompact.
\end{mainthm}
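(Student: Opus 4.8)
The plan is to split the proof into a soft topological reduction and a hard pcf-flavoured construction.

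\emph{Reduction.} I would first record exactly which closed sets to feed into the definition of countable metacompactness. Fix $S:=E^\kappa_\mu$ and a function $f\colon S\to\omega$, and put $T_n:=\{\delta\in S\mid f(\delta)>n\}$ and $D_n:=T_n\times\{1\}$. Each $D_n$ is closed (its complement contains every isolated point, and every $(\delta,1)$ with $\delta\notin T_n$ has a basic neighbourhood meeting no other non-isolated point), the sequence is decreasing, and $\bigcap_n D_n=\emptyset$ because $\bigcap_n T_n=\emptyset$. Now if $\langle U_n\mid n<\omega\rangle$ is any decreasing open expansion, then for $\delta\in T_n$ openness forces $U_n\supseteq(A_\delta\setminus\beta^n_\delta)\times\{0\}$ for some $\beta^n_\delta<\delta$, and we may take $\beta^n_\delta$ nondecreasing in $n$. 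Since $\bigcap_nT_n=\emptyset$ no point of $S\times\{1\}$ survives in $\bigcap_nU_n$, and a set of isolated points never forces a non-isolated point into an open set; hence the only possible survivor is an isolated point. Consequently $X_{\vec L}$ fails to be countably metacompact once $\vec L$ and $f$ satisfy the following property: for every regressive $g\colon S\to\kappa$ there is $\alpha<\kappa$ such that for every $n<\omega$ there is $\delta\in S$ with $f(\delta)>n$, $\alpha\in A_\delta$ and $g(\delta)\le\alpha$. (Given an expansion, apply this to the top markers $g(\delta):=\beta^{f(\delta)-1}_\delta$; the resulting $\alpha$ lies in $U_n$ for every $n$.) I stress at the outset that this property forces a single isolated $\alpha$ to sit in the end-segment $A_\delta\setminus g(\delta)$ of infinitely many ladders whose $f$-values tend to infinity, so the ladder system must be both highly overlapping and non-uniformizable.

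\emph{Construction.} To build such $\vec L$ and $f$ I would read off both from a pcf scale for $\kappa$. Fix an increasing sequence of regular cardinals $\langle\lambda_n\mid n<\omega\rangle$ cofinal in $\beth_\omega$ with $\mu<\lambda_0$, and a scale $\langle h_\xi\mid\xi<\kappa\rangle$ in $\prod_n\lambda_n$ that is $<^*$-increasing and cofinal, witnessing $\kappa=\cf(\beth_{\omega+1})$ as a true cofinality; passing to a subsequence I would assume all relevant points of $E^\kappa_\mu$ are good, so that the scale has an exact upper bound of the right cofinality there. Each $A_\delta$ is then chosen as a cofinal subset of $\delta$ of order type $\mu$ read along the scale at $\delta$ (so $\vec L$ is automatically $\mu$-bounded), and $f$ is chosen by spreading $\omega$ stationary pieces of $E^\kappa_\mu$ so that every relevant subset meets $\{f>n\}$ for all $n$. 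The engine for defeating all regressive $g$ is Shelah's middle diamond: a regressive $g$ is, locally at $\delta$, determined by the position of $g(\delta)$ inside the $\mu$-enumeration of $A_\delta$, hence by a colouring of $\kappa$ into $\mu$ colours; and at $\kappa=\cf(\beth_{\omega+1})$ middle diamond guesses every colouring into $\mu$ colours on a stationary set, because $\beth_\omega$ is a strong limit, so $2^\mu<\beth_\omega\le\beth_{\omega+1}$ with $\cf(\beth_{\omega+1})=\kappa$, putting $\mu$ below the middle-diamond threshold. This last point is precisely why the conclusion holds for co-boundedly many regular $\mu<\beth_\omega$: it holds for every regular $\mu$ small enough that the scale lives above $\mu$ and that middle diamond for $\mu$-colourings is available, and Shelah's theory makes this an end-segment of the regular cardinals below $\beth_\omega$.

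\emph{Main obstacle.} The delicate step is not the guessing itself but producing, for a given $g$, one isolated $\alpha$ that works simultaneously at all $\omega$ levels, i.e.\ that lies above $g(\delta)$ in infinitely many ladders with $f(\delta)\to\infty$. A single guess at one $\delta$ only yields a point above $g(\delta)$ in that one ladder, and distinct $\delta$ generically produce distinct points; to collapse these to a common $\alpha$ I would exploit the coherence of the scale at good points, arranging the ladders so that a prescribed scale-coordinate is realised as a common point in the tops of cofinally many ladders, and then let middle diamond select, above $g$, such a shared coordinate. Verifying that the scale can be set up to deliver a common survivor for all countably many levels at once, uniformly in $g$, is where the real work lies; the remaining bookkeeping—closedness of the $D_n$, the nondecreasing markers, and the reduction to top markers—is routine.
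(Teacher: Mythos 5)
There is a genuine gap, and you have located it yourself: the ``main obstacle'' paragraph concedes that you cannot produce the common survivor $\alpha$, and that is exactly the step your formulation of the reduction forces you to carry out. The underlying problem is that you have set up the combinatorial target the wrong way round. You ask the guessing principle to defeat the regressive marker function $g\colon S\to\kappa$, and you propose to feed $g$ to a middle diamond by coding ``the position of $g(\delta)$ inside the $\mu$-enumeration of $A_\delta$'' as a colouring of $\kappa$. But $g$ is one ordinal per $\delta\in S$; it is not a colouring of $\kappa$, and there is no function $c\colon\kappa\to\mu$ whose restriction to $A_\delta$ recovers $g(\delta)$ --- so the diamond, which guesses restrictions $c\restriction A_\delta$ of global colourings, has nothing to bite on. This type mismatch is why you are left needing a single isolated point lying in the tails of infinitely many ladders ``uniformly in $g$,'' a statement you do not prove.

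The fix is to run the reduction through the uniformization characterization (the paper's Fact~\ref{thm31}, from Leiderman--Szeptycki): $X_{\vec L}$ is countably metacompact iff for every $g\colon S\to\omega$ there is $h\colon\kappa\to\omega$ with $\sup\{\alpha\in A_\delta\mid h(\alpha)\le g(\delta)\}<\delta$ for all $\delta$. Unwinding your own condition shows it is equivalent to the negation of this (your $n_\alpha$'s assemble into such an $h$), so the contradiction should land at a single bad $\delta$, not at an exhibited survivor: if no isolated point lies in $\bigcap_nU_n$, then $h(\alpha):=\min\{n\mid(\alpha,0)\notin U_n\}$ is a total colouring of $\kappa$, and one bad $\delta$ already contradicts openness of $U_{f(\delta)-1}$ at $(\delta,1)$. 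The object to be guessed is therefore this $h$, and the level function $f\colon S\to\omega$ must be produced \emph{from} the diamond, via the weak-diamond form $\Phi(\vec L,\omega,\omega)$ applied to the functional $F(\bar h):=\min\{n\mid\sup\{\alpha\in A_\delta\mid\bar h(\alpha)=n\}=\delta\}$ --- which is well defined precisely because $\cf(\delta)=\mu>\aleph_0$, a point absent from your argument and the reason the theorem is stated for (co-boundedly many, hence uncountable) regular $\mu$. Once this is rearranged, your ``real work'' evaporates: no common survivor need be constructed, and Shelah's middle diamond at $\kappa=\cf(2^{\beth_\omega})$ (which you may cite, and which the paper obtains from the revised GCH and $I[\kappa]$ rather than from pcf scales and good points --- your scale apparatus is not needed) finishes the proof.
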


Theorem~\ref{thma} fits into a well-known programme of obtaining analogues in $\zfc$ of statements which are undecidable at small cardinals. Often times, the price is that these results concern higher cardinals and it suggests the fruitfulness of an asymptotic viewpoint to statements in infinite combinatorics.
So, here the Leiderman-Szeptycki consistency result for $\kappa=\aleph_1$ is obtained in $\zfc$
at $\kappa=\cf(2^\lambda)$, where $\lambda:=\sup\{2^{\aleph_0},2^{2^{\aleph_0}},2^{2^{2^{\aleph_0}}},\ldots\}$.
The proof builds heavily on Shelah's contributions to this programme,
where he previously showed that refined forms of Jensen's results for G\"odel's constructible universe \cite{MR0309729} hold asymptotically in any universe of set theory.
This includes refined forms of the $\gch$ \cite{Sh:460}, of the square principle \cite{Sh:420} and of the diamond principle \cite{Sh:775}.
These refined results often state that a desired phenomenon holds at all but to some indispensable small set of  `bad' cardinals,
and typically, these `bad' cardinals include $\aleph_0$ (see, for instance, \cite[Theorem~3.5]{MR2078366}, \cite[\S2]{Sh:829} and \cite[Lemma~8.13]{MR2768693}, the latter two highlighting the role played by $\aleph_1$-complete ideals having well-defined rank functions).
In our context, this raises the question whether a ladder system $\vec L$ as in Theorem~\ref{thma} may be obtained to concentrate at points of countable cofinality,
thereby ensuring that the corresponding space $X_{\vec L}$ be first countable.
The next two theorems provide sufficient conditions beyond $\zfc$ for an affirmative answer.

The first theorem yields a ladder system of interest from a weak arithmetic hypothesis,
the failure of which is consistent \cite{MR1087344}, but has a very high consistency strength.\footnote{Indeed, the failure asserts that $2^\lambda\ge\lambda^{+\omega+1}$ for every infinite cardinal $\lambda$,
so it in particular requires the singular cardinals hypothesis ($\sch$) to fail everywhere.}

\begin{mainthm}\label{thmb}
If there exists an infinite cardinal $\lambda$ such that
$\kappa:=2^{2^{2^{\lambda}}}$ is a finite successor of  $\lambda$,
then $E^\kappa_\omega$ carries an $\omega$-bounded ladder system $\vec L$ for which $X_{\vec L}$ is not countably metacompact.

In particular, if $2^{2^{2^{\aleph_0}}}<\aleph_\omega$, then the conclusion holds for $\kappa:=2^{2^{2^{\aleph_0}}}$.
\end{mainthm}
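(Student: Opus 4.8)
The plan is to reduce the statement to a $\diamondsuit$-type guessing principle on an $\omega$-bounded ladder system concentrated on $E^\kappa_\omega$, and then to extract that principle from Shelah's middle diamond \cite{Sh:775} using the arithmetic hypothesis. For the reduction, recall that in $X_{\vec L}$ the points of $\kappa\times\{0\}$ are isolated, while $(\delta,1)$ lies in the closure of a set $D$ iff $\{\alpha\in A_\delta\mid(\alpha,0)\in D\}$ is cofinal in $\delta$. Thus any $c\colon\kappa\to\omega$ induces a decreasing sequence of closed sets $D_n:=\{(\alpha,0)\mid c(\alpha)>n\}\cup\{(\delta,1)\mid\{\alpha\in A_\delta\mid c(\alpha)>n\}\text{ is cofinal in }\delta\}$, with $\bigcap_{n<\omega}D_n=\emptyset$ as soon as $\limsup_{\alpha\in A_\delta}c(\alpha)<\omega$ for every $\delta$. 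Unwinding the definition of an open expansion, $\langle D_n\mid n<\omega\rangle$ admits a decreasing open cover with empty intersection iff there is $g\colon\kappa\to\omega$ with $\liminf_{\alpha\in A_\delta}g(\alpha)\ge\limsup_{\alpha\in A_\delta}c(\alpha)$ for all $\delta\in E^\kappa_\omega$. Consequently, to witness that $X_{\vec L}$ is not countably metacompact it suffices to produce $\vec L$ and $c$ for which no such $g$ exists.

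This is where I would bring in guessing. Suppose we can build an $\omega$-bounded ladder system $\vec L=\langle A_\delta\mid\delta\in E^\kappa_\omega\rangle$, a sequence of guesses $\langle h_\delta\mid\delta\in E^\kappa_\omega\rangle$ with $h_\delta\colon A_\delta\to\omega$, and a single colouring $c\colon\kappa\to\omega$ such that (i) $\limsup_{\alpha\in A_\delta}c(\alpha)=\liminf_{\alpha\in A_\delta}h_\delta(\alpha)+1<\omega$ for every $\delta$, and (ii) for every $g\colon\kappa\to\omega$ the set $\{\delta\in E^\kappa_\omega\mid g\restriction A_\delta=h_\delta\}$ is stationary. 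Then for an arbitrary $g$, choosing any $\delta$ with $g\restriction A_\delta=h_\delta$ gives $\liminf_{\alpha\in A_\delta}g(\alpha)=\liminf_{\alpha\in A_\delta}h_\delta(\alpha)=\limsup_{\alpha\in A_\delta}c(\alpha)-1<\limsup_{\alpha\in A_\delta}c(\alpha)$, so no $g$ dominates and we are done. Clause (i) is bookkeeping: taking the ladders to have pairwise disjoint tails lets one define $c$ on each $A_\delta$ independently so as to realise the prescribed cofinal $\limsup$, and this disjointness can be arranged during the recursion that produces $\vec L$.

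The crux, and the step I expect to be hardest, is clause (ii): a diamond that guesses $\omega$-valued colourings exactly along an $\omega$-sized ladder, on a stationary subset of $E^\kappa_\omega$. Points of countable cofinality are precisely the ``bad'' case flagged in the introduction, where one cannot fall back on $\aleph_1$-complete, rank-carrying ideals, so ordinary diamond arguments break down. Here I would invoke the middle diamond technology of \cite{Sh:775}. Each guess $h_\delta$ ranges over ${}^{A_\delta}\omega$, a set of size $2^{\aleph_0}\le 2^\lambda<2^{2^{2^\lambda}}=\kappa$, so the space of guesses at each point is small relative to $\kappa$; and since $\kappa=2^{2^{2^\lambda}}$ is assumed to be a finite successor $\lambda^{+n}$ of $\lambda$, it is in particular a regular successor cardinal with a well-understood scale structure beneath it. The role of the three exponentials is to furnish exactly the arithmetic margin that the middle diamond theorem requires in order to output a stationary guessing sequence for countably many colours at cofinality $\omega$; verifying its hypotheses from ``$2^{2^{2^\lambda}}=\lambda^{+n}$'' is the technical heart of the argument, and is the analogue for $\mu=\omega$ of the guessing driving Theorem~\ref{thma}. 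Finally, the ``in particular'' clause is immediate: if $2^{2^{2^{\aleph_0}}}<\aleph_\omega$ then, taking $\lambda:=\aleph_0$, the cardinal $\kappa:=2^{2^{2^{\aleph_0}}}$ equals $\aleph_m$ for some $1\le m<\omega$, i.e.\ the finite successor $\aleph_0^{+m}$ of $\lambda$, so the hypothesis holds and the main clause applies.
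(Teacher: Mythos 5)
Your reduction of non--countable-metacompactness to a guessing statement is in the right spirit (it is essentially Fact~\ref{thm31}(3)), and you are right that the theorem ultimately comes from Shelah-style middle-diamond arithmetic. But the gap is precisely in the step you dismiss as bookkeeping. Your clause~(ii) is plain $\diamondsuit(\vec L,\omega)$, and that principle does hold here (it is Corollary~\ref{cor212}); the problem is that it does not suffice. For your argument to defeat a given $g:\kappa\rightarrow\omega$ you need $g$ to be guessed at some $\delta$ where the guessed restriction has \emph{finite} $\liminf$ along $A_\delta$. Since $A_\delta$ has order type $\omega$, a diamond sequence may well guess a given $f$ only at ordinals $\delta$ where $f\restriction A_\delta$ is finite-to-one, in which case $\liminf_{\alpha\in A_\delta}f(\alpha)=\omega$ and such a $\delta$ is useless; the paper makes exactly this point in the remark following Theorem~\ref{thm32}, where it is noted that a $\diamondsuit^n(\vec L,\theta)$-sequence can be arranged so that some wild $f$ is guessed only via bijective restrictions. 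So clause~(i) is not a property of the colouring $c$ that you can arrange after the fact by manipulating tails (and in any case one cannot make $\kappa$-many ladders on all of $E^\kappa_\omega$ have pairwise disjoint tails); it is an extra demand on the guessing sequence itself, and it is the actual technical heart of the theorem. This is also why the introduction flags cofinality $\omega$ as the hard case: the co-bounded-many-$\mu$ form of middle diamond (Fact~1) never delivers $\mu=\omega$, and for uncountable $\mu$ the finite-$\liminf$ issue disappears by pigeonhole (Lemma~\ref{midapp}), whereas at $\mu=\omega$ it does not.

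The paper's proof (Corollary~\ref{cor47}) therefore does not use $\diamondsuit(\vec L,\omega)$ at all. Instead it invokes Theorem~\ref{thm32}, which produces an $\omega$-bounded ladder system $\vec L=\langle A_\delta\mid\delta\in E^\kappa_\omega\rangle$ together with a map $g:E^\kappa_\omega\rightarrow\omega$ such that every $f:\kappa\rightarrow\omega$ is \emph{constant with value $g(\delta)$} on stationarily many $A_\delta$; constancy trivially gives $\sup\{\alpha\in A_\delta\mid f(\alpha)\le g(\delta)\}=\delta$, and Fact~\ref{thm31}(3) finishes (no auxiliary colouring $c$ is needed). Obtaining this Ramsey-strengthened guessing is where the three exponentials are genuinely spent: one sets $\chi:=\theta^+$ so that $\chi\rightarrow(\mu)^1_\theta$, $\sigma:=2^{<\chi}$, $\lambda:=2^\sigma$, $\kappa:=2^\lambda$, runs the wide-diamond/$\onto$ machinery relative to the ideal of Lemma~\ref{ontothird}, and uses the partition relation to ensure that the relevant restrictions are ``good'' (constant on a cofinal subset). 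The finite-successor hypothesis $\kappa<\lambda^{+\omega}$ enters through Fact~\ref{meetingfact} (giving $m(\varkappa,\mu)=\varkappa$ for all $\varkappa<\kappa$) in place of the revised GCH, which is what allows $\mu=\omega$. Without this constancy feature, your clauses (i) and (ii) cannot be simultaneously secured by anything you have cited, so the proposal as written does not close.
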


The second theorem yields a ladder system of interest from the existence of a particular type of a Souslin tree.

\begin{mainthm}\label{thmc} If there exists a (resp.~coherent) $\kappa$-Souslin tree,
then there exists a ladder system $\vec L$ over some stationary subset of $\kappa$ (resp.~over $E^\kappa_\omega$) for which $X_{\vec L}$ is not countably metacompact.
\end{mainthm}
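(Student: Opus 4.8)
The plan is to decouple the topology from the combinatorics. First I would record the following reduction, which is what makes spaces of the form $X_{\vec L}$ tractable: to witness that $X_{\vec L}$ is \emph{not} countably metacompact it suffices to produce, alongside the ladder system $\vec L=\langle A_\delta\mid\delta\in S\rangle$, a coloring $c\colon S\to\omega$ such that
\[
\forall h\colon\kappa\to\omega\ \exists\delta\in S\ \big(\{\alpha\in A_\delta\mid h(\alpha)\le c(\delta)\}\text{ is cofinal in }\delta\big).
\]
The point is that, since $S\times\{1\}$ is a closed discrete subspace, $D_n:=\{(\delta,1)\mid c(\delta)\ge n\}$ is a decreasing sequence of closed sets with empty intersection, and a separating sequence $\langle U_n\rangle$ of open sets is, up to the obvious translation $h(\alpha):=\min\{n\mid(\alpha,0)\notin U_n\}$, exactly a function $h$ for which the displayed condition fails at every $\delta$; the only subtlety, that a point of countable cofinality must actually leave $\bigcap_n U_n$, is handled by admitting $(\delta,1)$ into $U_n$ only while $c(\delta)\ge n$. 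Thus the whole theorem becomes: from a (coherent) $\kappa$-Souslin tree, build $\vec L$ and $c$ satisfying the boxed \emph{anti-uniformization} property, over a stationary set in general and over $E^\kappa_\omega$ in the coherent case.

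The conceptual engine is a $\diamondsuit$-style diagonalisation: if at $\delta$ we had access to the restriction $h\restriction\delta$, we would simply let $m_\delta$ be least such that $\{\alpha<\delta\mid h(\alpha)\le m_\delta\}$ is cofinal in $\delta$, choose $A_\delta$ inside this set, and put $c(\delta):=m_\delta$, so that $\delta$ is automatically ``bad'' for $h$. Two things must be supplied. Guessing: we need that for every $h$ the correct local data is used at \emph{some} $\delta\in S$ — which is far weaker than full $\diamondsuit(\kappa)$, as we only need one bad $\delta$ per $h$. And, at points of countable cofinality, the least $m_\delta$ above need not exist, since $h$ may tend to infinity along a cofinal $\omega$-sequence; this is precisely the obstruction flagged in the introduction, and it is why the unrestricted case yields only an arbitrary stationary set while concentrating on $E^\kappa_\omega$ requires the extra rigidity of a coherent tree.

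To supply the guessing I would work with a normalised, streamlined $\kappa$-Souslin tree $T$ whose underlying set is $\kappa$, whose levels are consecutive intervals, and for which a club $C$ of $\delta$ satisfies $T\restriction\delta=\delta$. For each $\delta$ in $C$ (intersected with $E^\kappa_\omega$ in the coherent case) I would fix a node $x_\delta$ at level $\delta$ and take $A_\delta$ to be the branch below $x_\delta$ — all of it in the general case (cofinal in $\delta$ by the interval labelling), and, in the coherent case, the image $\{x_\delta\restriction\delta_i\mid i<\omega\}$ of a fixed cofinal $\omega$-sequence $\langle\delta_i\rangle$, which is a cofinal subset of $\delta$ of order type $\omega$, so that $\vec L$ is $\omega$-bounded; off $C$ the ladders may be filled in arbitrarily. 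The verification of the boxed property then runs by contradiction: given a ``winner'' $h$ (one that defeats the property), the regressive map $\delta\mapsto$ (a bound witnessing $\{\alpha\in A_\delta\mid h(\alpha)\le c(\delta)\}$ is bounded) together with $c$ is pressed down via Fodor to a stationary $S''$ on which the threshold $c^{*}$, the bound $\beta^{*}$, and the node $r:=x_\delta\restriction\beta^{*}$ are all constant; above $r$ we are left with $\kappa$ many branches all of whose ladder nodes are coloured $>c^{*}$, and the goal is to assemble from this data a configuration forbidden in $T$.

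The hard part — and the step I expect to resist a one-line treatment — is exactly this last assembly: a naive pressing-down is not enough, since the set $\{\,y\mid h(y)>c^{*}\,\}$ can itself contain a $\kappa$-Souslin subtree. The design of $x_\delta$ and $c$ must therefore be coordinated with $T$ (by an interleaved bookkeeping against potential winners, exploiting the $\kappa$-completeness and $\kappa$-cc/sealing properties of the tree) so that any winner $h$ is forced either to thread a cofinal branch or to spread a colour class into a $\kappa$-sized antichain, contradicting Souslinity. In the coherent case the finite-difference structure is what lets the rigid behaviour seen along one branch $x_\delta$ be transported to the others, so that the $\omega$-bounded ladders over $E^\kappa_\omega$ still force such a forbidden configuration; this is where coherence does the work that uncountable cofinality did for free in the general case.
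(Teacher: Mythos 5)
Your reduction to the combinatorial statement is correct and is exactly the paper's Fact~4.2 (clause~(3) of the characterisation of countable metacompactness for $X_{\vec L}$), so that part is fine. The gap is in the second half: you never actually derive the anti-uniformization property from the Souslin tree, and you say so yourself (``the step I expect to resist a one-line treatment''). The diagonalisation you sketch is circular as stated --- $c(\delta)$ is defined from a threshold $m_\delta$ depending on the adversary $h$, which must be chosen \emph{after} $\vec L$ and $c$ are fixed --- and the repair you gesture at (``interleaved bookkeeping against potential winners'', a ``coordinated design'' of the $x_\delta$ and $c$) is precisely the content a proof would have to supply. A Souslin tree does not directly give a $\diamondsuit$-style guessing of functions $h\colon\kappa\to\omega$, and pressing down along branches of $T$ does not by itself produce a forbidden configuration, as you yourself observe.

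The idea you are missing is to change what is guessed: do not try to guess the threshold of $h$ at each $\delta$, but fix the colouring in advance by partitioning the domain. The paper invokes a result from an earlier paper of two of the authors, namely that a $\kappa$-Souslin tree yields $\clubsuit_{\ad}(\mathcal S,1,1)$ for a $\kappa$-sized pairwise disjoint family $\mathcal S$ of stationary sets, with domain $E^\kappa_\omega$ when the tree is regressive (in particular, coherent). It then splits the domain $S$ into $\omega$ many pieces $S_n$, each covering a member of $\mathcal S$ and hence each carrying the guessing property that the ladders over $S_n$ meet every cofinal $A\s\kappa$ cofinally at some $\delta\in S_n$, and sets $g\equiv n$ on $S_n$. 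For any $f\colon\kappa\to\omega$ some colour class $A:=f^{-1}\{n\}$ is cofinal, and one application of the guessing on $S_n$ yields $\delta$ with $\sup\{\alpha\in A_\delta\mid f(\alpha)=g(\delta)\}=\delta$; the almost-disjointness of the ladders gives Hausdorffness and regularity. This is a black-box argument once the $\clubsuit_{\ad}$ principle is available; without that principle (or an equivalent construction from the tree, which is genuinely nontrivial), your proposal does not close.
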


Note that in Theorems \ref{thma}, \ref{thmb}, and \ref{thmc}, the cardinal $\kappa$ was not defined in terms of the $\aleph$-hierarchy. This is not a coincidence, as a simple generalisation of \cite[Claim~1]{MR2099600} implies that for any ordinal $\alpha$, upon forcing Martin's axiom together with the continuum being greater than $\aleph_\alpha$, for every $\omega$-bounded ladder system $\vec L$ over a stationary subset of $E^\kappa_\omega$,  $X_{\vec L}$ is countably metacompact.

\medskip

Ultimately, our proofs of Theorems \ref{thma} and \ref{thmb} go through a diamond-type principle on ladder systems studied by Shelah under various names (\cite{Sh:775, Sh:829}): middle diamond, super black box, $\ps_1$.
We opt for the following nomenclature.
\begin{defi}\label{defdio} For a ladder system $\vec L=\langle A_\delta\mid\delta\in S\rangle$ over some stationary $S\s\kappa$
and a cardinal $\theta$,
$\diamondsuit(\vec L,\theta)$ asserts the existence of a sequence $\langle f_\delta\mid\delta\in S\rangle$ such that:
\begin{itemize}
\item for every $\delta\in S$, $f_\delta$ is a function from $A_\delta$ to $\theta$;
\item for every function $f:\kappa\rightarrow\theta$,
there are stationarily many $\delta\in S$ such that $f\restriction A_\delta=f_\delta$.
\end{itemize}
\end{defi}

Note that Jensen's diamond principle $\diamondsuit(S)$ is simply $\diamondsuit(\vec L,2)$ for the degenerate ladder system $\vec L=\langle \delta\mid\delta\in S\rangle$.
In \cite{Sh:775}, Shelah proved that $\diamondsuit(\vec L,\mu)$ holds in $\zfc$ for various ladder systems $\vec L$ and cardinals $\mu$.
A central case reads as follows.

\begin{factt}[Shelah]\label{sh775}
Suppose that $\Lambda\le\lambda$ is a pair of uncountable cardinals such that $\Lambda$ is a strong limit.
Denote $\kappa := \cf(2^\lambda)$.
Then, for co-boundedly many regular cardinals $\mu<\Lambda$,
there exists a $\mu$-bounded ladder system $\vec C=\langle C_\delta\mid \delta\in E^\kappa_\mu\rangle$ with each $C_\delta$ a club in $\delta$
such that $\diamondsuit(\vec C, \mu)$ holds.\footnote{A ladder system as above, i.e., consisting of sets which are closed subsets of their suprema, is called a \emph{$C$-sequence}.}
\end{factt}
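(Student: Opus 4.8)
The plan is to obtain $\vec C$ together with its guessing sequence $\langle f_\delta\mid\delta\in E^\kappa_\mu\rangle$ from two of Shelah's structural results: the \emph{revised $\gch$} of \cite{Sh:460}, which pins down the admissible values of $\mu$, and a filtration of a set of size $2^\lambda$ along its own cofinality $\kappa$, which drives the guessing. Note first that by K\"onig's theorem $\kappa=\cf(2^\lambda)>\lambda\ge\Lambda>\mu$ throughout, so $\mu^+<\kappa$ and the standard club-guessing apparatus is available on $E^\kappa_\mu$; moreover $2^\mu<\Lambda\le\lambda$ since $\Lambda$ is a strong limit, so every $f_\delta\in{}^{C_\delta}\mu$ will be a legitimate $\mu$-bounded object coded by a bounded subset of $\lambda$.

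\textbf{Choosing $\mu$.} First I would appeal to the revised $\gch$: as $\Lambda$ is an uncountable strong limit with $\Lambda\le\lambda$, there is $\sigma<\Lambda$ such that for every regular $\mu\in(\sigma,\Lambda)$ the revised power $\lambda^{[\mu]}$ equals $\lambda$. This is precisely the ``co-boundedly many $\mu$'' in the statement, and it is the only point at which strong-limitness of $\Lambda$ is used. I would fix such a $\mu$ for the rest of the argument.

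\textbf{The $C$-sequence and the guesses.} Next I would fix a large regular $\chi$, a well-ordering $\lhd$ of $H(\chi)$, and build an increasing continuous chain $\langle M_i\mid i<\kappa\rangle$ of elementary submodels of $(H(\chi),\in,\lhd)$ with $H(\Lambda)\cup\{\lambda,\mu,\lhd\}\s M_0$, with $|M_i|=\lambda_i$ for a fixed increasing sequence $\langle\lambda_i\mid i<\kappa\rangle$ of cardinals in $[\lambda,2^\lambda)$ cofinal in $2^\lambda$, with $\langle M_j\mid j\le i\rangle\in M_{i+1}$, and --- crucially using $\kappa=\cf(2^\lambda)$ --- with $\mathcal P(\lambda)\s\bigcup_{i<\kappa}M_i$. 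The ordinals $\delta_i:=M_i\cap\kappa$ form a club in $\kappa$, and for $\delta=\delta_i$ with $\cf(i)=\mu$ I would take $C_\delta$ to be a club in $\delta$ of order type $\mu$ drawn from $\{\delta_j\mid j<i\}$; interleaving a $\zfc$ club-guessing sequence on $E^\kappa_\mu$, I may further assume that for every club $E\s\kappa$ there are stationarily many $\delta$ whose $C_\delta$ is contained in $E$. The guess $f_\delta\in{}^{C_\delta}\mu$ I would then define by a Jensen-style minimal-counterexample recursion along $\lhd$: at $\delta=\delta_i$, if some function coded in $M_i$ threatens to defeat every earlier guess on a club, let $f_\delta$ be the trace on $C_\delta$ of the $\lhd$-least such threat, and otherwise let $f_\delta$ be trivial.

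\textbf{The verification, and the main obstacle.} The crux is to show that $\langle f_\delta\rangle$ guesses: given $f\colon\kappa\to\mu$ and a club $E\s\kappa$, one must produce $\delta\in E^\kappa_\mu$ with $C_\delta\s E$ and $f\restriction C_\delta=f_\delta$. Unlike in the classical $\diamondsuit$ argument, one cannot simply reflect $f$ into the chain: a given $f\colon\kappa\to\mu$ need not lie in $\bigcup_i M_i$, a set of size only $2^\lambda$, and each $M_i$ individually has size $\lambda_i<2^\lambda$, so no single model of the chain will contain $f$ together with enough of the construction. The resolution is to reflect only the \emph{local} data $f\restriction a$ for $\mu$-sized sets $a$ coded in the $M_i$'s: the equality $\lambda^{[\mu]}=\lambda$ bounds the amount of information needed to specify all such restrictions by $\lambda$, so that the $\kappa$-cofinal filtration $\langle M_i\rangle$, the elementarity $\langle M_j\mid j\le i\rangle\in M_{i+1}$, and club guessing together force the canonical recursion to have selected exactly $f\restriction C_\delta$ at stationarily many $\delta$ with $C_\delta\s E$. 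Turning this last sentence into a proof --- propagating the $\lambda$-bounded control of local types into genuine stationary guessing along the $C$-sequence --- is where essentially all the difficulty of \cite{Sh:775} resides, and it is the step I expect to be the true obstacle; by comparison, the cardinal-arithmetic bookkeeping of the preceding steps is routine.
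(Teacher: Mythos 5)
Your proposal sets up plausible scaffolding but stops exactly where the theorem is actually proved: you acknowledge that the verification of guessing is where all the difficulty resides, and you do not supply it, so as it stands this is not a proof. Moreover, the mechanism you propose for that step --- a Jensen-style minimal-counterexample recursion along an elementary chain $\langle M_i\mid i<\kappa\rangle$ --- is precisely what cannot work here, for the reason you yourself identify: a function $f:\kappa\to\mu$ need not belong to $\bigcup_i M_i$, and at stage $\delta$ there are up to $2^\lambda$ many pairwise incompatible candidates for $f\restriction C_\delta$, so no canonical recursion can prefer the right one. The paper's proof uses no elementary submodels; it resolves the difficulty by a two-step reduction absent from your outline. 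First (Lemma~\ref{countinglemma}) one settles for a \emph{wide} diamond $\diamondsuit^*(\vec C,2^\lambda,2^\lambda)$: each $\delta$ carries a \emph{set} $\mathcal P_\delta$ of candidates, namely all $f\in{}^{C_\delta}h(\delta)$ whose initial segments along $\nacc(C_\delta)$ appear among the first $h(\delta)$ terms of a fixed enumeration; catching $f\restriction C_\delta$ club-often is then immediate from closure, with no reflection of $f$ needed, and the real content is the counting argument showing $|\mathcal P_\delta|<2^\lambda$ via an almost-disjoint family of branches and $m(\varkappa,\mu)<2^\lambda$. This is also where the weak coherence of the $C$-sequence, obtained from the approachability ideal $I[\kappa]$ rather than from a submodel chain, is genuinely needed. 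Second (Lemmas~\ref{lemma114}, \ref{lemma115} and~\ref{l210}) one uniformizes the wide guess down to a single $f_\delta$: an $\onto(\{\lambda\},[2^\lambda]^{\le\lambda},\theta)$ colouring, built from $m(\lambda,\theta)=\lambda$, converts $\diamondsuit(\vec C,2^\lambda,2^\lambda)$ into the weak diamond $\Phi(\vec C,2^\lambda,\theta)$ with $\theta>2^\mu$ colours, and $\Phi(\vec C,\mu,2^\mu)$ in turn yields $\diamondsuit(\vec C,\mu)$ by decoding. None of this selection machinery appears in your proposal, and it is exactly the part you flag as the obstacle.

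A smaller but real issue is your use of the revised $\gch$. You fix one $\mu$ with $\lambda^{[\mu]}=\lambda$, but the proof needs two distinct instances: the choice of $\mu$ must guarantee $\varkappa^{[\mu]}<2^\lambda$ (hence $m(\varkappa,\mu)<2^\lambda$) for \emph{every} $\varkappa<2^\lambda$ --- this is the uniformization over unboundedly many $\varkappa$ in Claim~\ref{c261}, using $\cf(2^\lambda)>\Lambda$ --- and, separately, a regular $\theta\in(2^\mu,\Lambda)$ with $\lambda^{[\theta]}=\lambda$ and $2^\theta\le\lambda$ for the $\onto$ colouring. Conflating these into a single appeal to $\lambda^{[\mu]}=\lambda$ loses the bound on the width of $\mathcal P_\delta$, which is the quantity the whole argument turns on.
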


That Fact~\ref{sh775} should have applications in set-theoretic topology
was anticipated ever since \cite{Sh:775} was written,
and yet Theorem~\ref{thma} is the first application.

At the beginning of this paper, we shall give an accessible proof of Fact~\ref{sh775}.
Our motivation for doing so is twofold.
First, this will pave the way for the proof of Theorem~\ref{thmb}.
Second, ladder systems are a rich source of examples and counterexamples in set-theoretic topology (see for example \cite{MR728725}),
and weak diamonds have proved to be useful in studying abstract elementary classes (see for example \cite{MR2532039}),
so we hope that this paper will help popularise this result of Shelah and its variations.
In particular, we expect the next theorem to find further applications.
\begin{mainthm}\label{thmd} Suppose that $\aleph_\omega$ is a strong limit.
For every positive integer $n$,
for all infinite cardinals $\mu\le\theta<\aleph_\omega$,
there are a cardinal $\kappa<\aleph_\omega$,
a $\mu$-bounded ladder system $\vec L=\langle A_\delta\mid\delta\in E^\kappa_\mu\rangle$
and a map $g:\kappa\rightarrow\theta$
such that for every function $f:[\kappa]^n\rightarrow\theta$, there are stationarily many $\delta\in E^\kappa_\mu$ such that
$f``[A_\delta]^n=\{g(\delta)\}$.
\end{mainthm}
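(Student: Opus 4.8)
The plan is to read the statement as a constrained middle diamond—one whose guesses are forced to be \emph{constant} $n$-ary colourings—and to produce it by the same $\cf(2^\lambda)$-machinery that drives Fact~\ref{sh775} and Theorem~\ref{thmb}, now with a large alphabet. The hypothesis that $\aleph_\omega$ is a strong limit is used in the form $\beth_k(\theta)<\aleph_\omega$ for all $\theta<\aleph_\omega$ and $k<\omega$; this simultaneously keeps $\kappa$ below $\aleph_\omega$ and supplies Erd\H{o}s--Rado room. Given $n,\mu,\theta$, I would fix $\varpi:=\beth_{n-1}(\theta)^+<\aleph_\omega$ and an alphabet $\theta^*<\aleph_\omega$ large enough to code every colouring $[\rho]^{n-1}\to\theta$ with $\rho<\kappa$, and then select $\kappa<\aleph_\omega$ and the regular $\mu\le\theta$ by the finite-successor form of Fact~\ref{sh775} underlying Theorem~\ref{thmb}, obtaining a $\mu$-bounded $C$-sequence $\vec C=\langle C_\delta\mid\delta\in E^\kappa_\mu\rangle$ carrying $\diamondsuit(\vec C,\theta^*)$ in the sense of Definition~\ref{defdio}.

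To pass from unary to $n$-ary data I would use the top-element coding: associate to $f:[\kappa]^n\to\theta$ the function $F_f:\kappa\to\theta^*$ sending $\alpha$ to a code of $s\mapsto f(s\cup\{\alpha\})$ on $[\alpha]^{n-1}$, so that $F_f\restriction C_\delta$ and $C_\delta$ recover $f\restriction[C_\delta]^n$; $\diamondsuit(\vec C,\theta^*)$ then returns, for each $f$, stationarily many $\delta$ at which the guessed word decodes to $f\restriction[C_\delta]^n$. The ladder $\vec L$ and the map $g$ must be committed in advance, so I would define them by recursion along $\kappa=\cf(2^\lambda)$: marching through a cofinal-in-$2^\lambda$ enumeration of the potential local colourings, I assign to each $\delta$ a cofinal $A_\delta\s\delta$ of order type $\mu$ and a colour $g(\delta)$ that is homogeneous for the colouring handed to $\delta$, harvesting that homogeneity not inside the thin set $C_\delta$ but from the $\varpi$-wide ensemble of colourings via Erd\H{o}s--Rado. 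The design goal is that every actual $f$ agree, on a stationary set of $\delta$, with one of the committed configurations, so that there $f``[A_\delta]^n=\{g(\delta)\}$.

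Granting such a commitment the verification is immediate, so I expect the commitment itself to be the crux, and for a sharp reason tied to the inequality $\mu\le\theta$. One cannot manufacture a cofinal homogeneous $A_\delta$ by thinning a single colouring on a thin set: a $\theta$-colouring of a set of cofinality $\mu\le\theta$ may have every colour class bounded, leaving no cofinal homogeneous subset whatsoever. Homogeneity must therefore be extracted from the full $2^\lambda$-sized ensemble of colourings, where the relevant partition relation holds with room to spare—this is exactly where the threshold $\varpi=\beth_{n-1}(\theta)^+$, and hence the number of exponentials (governed by $n$, and specialising to the triple exponential of Theorem~\ref{thmb} for small $n$), is spent. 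The remaining technical burden is to run this Erd\H{o}s--Rado harvesting coherently along the $\kappa=\cf(2^\lambda)$ many coordinates, matching each committed colour to the prescribed $g(\delta)$, so that the large-alphabet diamond of Fact~\ref{sh775} still meets stationarily many correct $\delta$ after the homogenisation, all while $\varpi$, $\theta^*$ and $2^{<\kappa}$ remain below the strong limit $\aleph_\omega$.
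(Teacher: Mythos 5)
Your proposal correctly isolates two genuine ingredients of the proof: the Erd\H{o}s--Rado threshold $\chi:=(\beth_{n-1}(\theta))^+$ with $\chi\rightarrow(\mu)^n_\theta$, and the obstruction that a $\theta$-colouring of a set of order type $\mu\le\theta$ need not admit any cofinal homogeneous subset. But the architecture you build around them has a gap at exactly the point you flag as ``the crux,'' and the first step is already unavailable. For the top-element coding you describe, the alphabet $\theta^*$ must enumerate all colourings $s\mapsto f(s\cup\{\alpha\})$ on $[\alpha]^{n-1}$ for $\alpha<\kappa$, which forces $\theta^*\ge 2^{<\kappa}\ge\kappa$; no variant of Fact~\ref{sh775} in Section~\ref{sec2} produces a narrow diamond $\diamondsuit(\vec C,\theta^*)$ with that many colours (the pipeline through Lemmas \ref{lemma114}, \ref{lemma115} and \ref{l210} caps the number of colours of the narrow diamond well below $\lambda$, since the $\onto$ parameter $\theta$ must satisfy $2^\theta\le\lambda$ and Lemma~\ref{l210} further requires $\mu^{|\xi|}$ colours in $\Phi$). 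The actual proof of Theorem~\ref{thm32} never passes through a narrow diamond at all: it takes $\kappa=\Lambda^+$ with $\Lambda^\sigma=\Lambda$, so that the family $\mathcal P_\delta$ of \emph{all} maps from tails of $[C_\delta]^n$ into $\delta$ already has size $<\kappa$, and the guessing is done directly on this wide family via the ideal-based $\onto$ principle of Lemma~\ref{ontothird}.

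The deeper problem is the quantifier order in your ``commitment'' step. You propose to fix $\delta\in E^\kappa_\mu$ and then assign a cofinal $A_\delta\s\delta$ homogeneous for the colouring handed to $\delta$, ``harvesting homogeneity from the ensemble of colourings''; but no amount of consulting an ensemble produces a cofinal subset of a point of cofinality $\mu$ on which a $\theta$-colouring is constant, since homogeneity must ultimately be extracted from ordinals below $\delta$, and $\mu\not\rightarrow(\mu)^n_\theta$. The paper inverts the quantifiers: it builds (Claim~\ref{claim351}) a coherent $\chi$-bounded $C$-sequence attached to points $\rho$ of cofinality $\chi$, with $\nacc(C_\rho)$ guessing $D\cap T$; Erd\H{o}s--Rado is applied to each coded colouring $f_\alpha$ \emph{inside} $\nacc(C_\rho\setminus\epsilon)$, which has order type $\chi$, yielding a homogeneous set $H_\alpha$ of order type $\mu$, and the target point is \emph{defined} as $\delta_\alpha:=\sup(H_\alpha)\in\acc(C_\rho)\cap E^\kappa_\mu$ --- the good $\delta$ is an output of the homogenisation, not an input. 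A $\sigma^+$-complete ideal then stabilises a single $\delta$ across $I^+$-many of the $\lambda$ simultaneously coded colourings, and the $\onto$ colouring pre-commits, via $g_\delta$ and a final pigeonhole over $\alpha<\lambda$, which cofinal order-type-$\mu$ subset $B_{\delta,i}\s\nacc(C_\delta)$ and which colour $\tau_{\delta,i}$ to declare as $A_\delta$ and $g(\delta)$ in advance. These three devices --- the $\chi$-bounded coherent club-guessing sequence, the ``$\delta$ equals the supremum of the homogeneous set'' trick, and the ideal/$\onto$-based selection of the committed pair $(A_\delta,g(\delta))$ --- are the substance of the proof and are absent from your outline.
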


\subsection{Organisation of this paper}
In Section~\ref{sec2} we include a proof of Fact~\ref{sh775} and some variants. This section is written at a slower pace hoping to introduce readers to the basic construction of diamonds on ladders systems.
The reader we have in mind here is a topologist or a model-theorist who is not necessarily familiar with Shelah's revised $\gch$ and the approachability ideal.
While the results here are due to Shelah or can be extracted from \cite{Sh:775}, some of the proofs make use of ideas and concepts from recent papers of the authors of this paper.

In Section~\ref{sec3} we prepare the ground for Theorem~\ref{thmb},
and we prove Theorem~\ref{thmd} which is of independent interest. Here, we shall assume the reader is comfortable with the content of Section~\ref{sec2}.

Section~\ref{sec1} is focused on topological applications of diamonds on ladder systems and our other main results. In particular, the proof of Theorems \ref{thma}, \ref{thmb} and \ref{thmc} will be found there.
\subsection{Notation and conventions}\label{conventions}
The set of all infinite regular cardinals below $\kappa$ is denoted by $\reg(\kappa)$.
For a set $X$, we write $[X]^{\kappa}$ for the collection of all subsets of $X$ of size $\kappa$. The collections $[X]^{\le\kappa}$ and $[X]^{<\kappa}$ are defined similarly.
In the specific case when $X$ is a set of ordinals and $n$ is an integer $\ge2$, we will identify $[X]^n$ with the set of ordered tuples $(\alpha_1,\ldots,\alpha_n)$ where $\alpha_1<\cdots<\alpha_n$ are all from $X$.
For a set of ordinals $A$, we write $\acc(A) := \{\alpha\in A \mid \sup(A \cap \alpha) = \alpha > 0\}$
and $\nacc(A) := A \setminus \acc(A)$.
For cardinals $\theta$ and $\mu$, $\theta^{+\mu}$ denotes the $\mu^{\text{th}}$ cardinal after $\theta$: so if $\theta = \aleph_\alpha$, then $\theta^{+\mu}= \aleph_{\alpha+\mu}$.
The map $\alpha\mapsto\beth_\alpha$ is defined by recursion on the class of ordinals, setting 	$\beth_0:=\aleph_0$, $\beth_{\alpha+1}:=2^{\beth_\alpha}$ and $\beth_\alpha:=\bigcup_{\beta<\alpha}\beth_\beta$ for every infinite limit ordinal $\alpha$.

\section{Diamonds on ladder systems}\label{sec2}
\subsection{Motivation} Diamonds on ladder systems have already found deep applications in algebra \cite{Sh:898,Sh:1028},
but in view of the overall limited number of applications thus far,
we would like to further motivate this concept by recalling an application to model theory and mentioning a natural strengthening of it that is still an open problem.

To start, note that Definition~\ref{defdio} concerns itself with guessing one-dimensional colourings $f: \kappa \rightarrow \theta$,
but in the same way one may also wish to guess higher-dimensional colourings $f$ with $\dom(f)=[\kappa]^2$ or even $\dom(f)={}^{<\omega}\kappa$.
Here is a concrete variation:
\begin{defn}\label{defdiinfnite} For a ladder system $\vec L=\langle A_\delta\mid\delta\in S\rangle$ over some stationary $S\s\kappa$
and a cardinal $\theta$,
$\diamondsuit^{<\omega}(\vec L,\theta)$ asserts the existence of a sequence $\langle f_\delta\mid\delta\in S\rangle$ such that:
\begin{itemize}
\item for every $\delta\in S$, $f_\delta$ is a function from ${}^{<\omega}A_\delta$ to $\theta$;
\item for every function $f:{}^{< \omega}\kappa\rightarrow\theta$,
there are stationarily many $\delta\in S$ such that $f\restriction {}^{<\omega}A_\delta=f_\delta$.
\end{itemize}
\end{defn}
One of the reasons for the interest in higher-dimensional variations of Definition~\ref{defdio} is provided by the following application.
\begin{fact}[Shelah, {\cite[Theorem 0.1]{Sh:775}}] \label{substructuremd}
Suppose that $\vec L=\langle A_\delta\mid\delta\in S\rangle$ is a ladder system over some stationary $S\s\kappa$
and $\tau$ is an infinite cardinal such that
$\diamondsuit^{<\omega}(\vec L,2^\tau)$ holds.

Then, for every relational language $\mathcal L$ of size at most $\tau$, there is a sequence $\langle M_\delta \mid \delta \in S\rangle$ of $\mathcal L$-structures such that for every $\mathcal L$-structure $M$ with carrier set $\kappa$, for stationarily-many $\delta \in S$, $M_\delta$ is a substructure of $M$ with carrier set $A_\delta$.
\end{fact}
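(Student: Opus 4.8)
The plan is to reduce the model-theoretic conclusion to the combinatorial principle $\diamondsuit^{<\omega}(\vec L,2^\tau)$ by a coding argument that crucially exploits the relationality of $\mathcal L$. First I would fix an enumeration $\langle R_i\mid i<\tau\rangle$ of the relation symbols of $\mathcal L$, allowing repetitions in case $|\mathcal L|<\tau$, and write $n_i$ for the arity of $R_i$. Set $\theta:=2^\tau$ and identify $\theta$ with the set ${}^\tau 2$ of functions from $\tau$ to $2$. The point of passing to $2^\tau$ is that a single $\theta$-valued colouring can simultaneously record the membership information of all $\tau$-many relations, so that the one-dimensional guessing supplied by $\diamondsuit^{<\omega}(\vec L,\theta)$ is already enough to capture an entire structure.

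Next I would set up a dictionary between colourings and structures. To an arbitrary $\mathcal L$-structure $M$ with carrier $\kappa$, associate the colouring $f_M:{}^{<\omega}\kappa\rightarrow\theta$ by declaring, for each finite tuple $\bar b$ and each $i<\tau$, that $f_M(\bar b)(i)=1$ iff $\bar b\in R_i^M$; in particular $f_M(\bar b)(i)=0$ whenever the length of $\bar b$ differs from $n_i$. Conversely, to any $A\s\kappa$ and any $g:{}^{<\omega}A\rightarrow\theta$, associate the $\mathcal L$-structure $N_g$ with carrier $A$ defined by $R_i^{N_g}:=\{\bar b\in{}^{n_i}A\mid g(\bar b)(i)=1\}$. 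This is a legitimate $\mathcal L$-structure precisely because $\mathcal L$ has no function symbols or constants, so that any subset of the carrier together with any assignment of relations yields a structure.

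The crucial compatibility observation is that, for any $M$ and any $\delta\in S$, writing $g:=f_M\restriction{}^{<\omega}A_\delta$, the structure $N_g$ is exactly the substructure of $M$ with carrier $A_\delta$. Indeed, for each $i$ we get $R_i^{N_g}=\{\bar b\in{}^{n_i}A_\delta\mid\bar b\in R_i^M\}=R_i^M\cap{}^{n_i}A_\delta$, which is the interpretation of $R_i$ in the induced substructure $M\restriction A_\delta$. This step again uses relationality: induced substructures of relational structures are obtained simply by restricting each relation, with no closure condition to verify.

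Finally I would invoke $\diamondsuit^{<\omega}(\vec L,\theta)$ to obtain a guessing sequence $\langle f_\delta\mid\delta\in S\rangle$ with each $f_\delta:{}^{<\omega}A_\delta\rightarrow\theta$, and set $M_\delta:=N_{f_\delta}$. Given any $\mathcal L$-structure $M$ on $\kappa$, applying the principle to the colouring $f_M$ yields stationarily many $\delta\in S$ with $f_M\restriction{}^{<\omega}A_\delta=f_\delta$; for each such $\delta$ the compatibility observation gives $M_\delta=N_{f_\delta}=N_{f_M\restriction{}^{<\omega}A_\delta}=M\restriction A_\delta$, which is the desired substructure. The argument is essentially pure bookkeeping, so there is no genuine obstacle; the only points to watch are that the encoding must bundle all $\tau$ relations into the single coordinate $2^\tau$ so that one application of $\diamondsuit^{<\omega}$ does the whole job, and that relationality is exactly what makes the decoding produce honest induced substructures rather than merely abstract $\mathcal L$-structures on the carrier $A_\delta$.
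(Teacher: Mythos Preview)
Your proposal is correct and follows essentially the same approach as the paper: both encode an $\mathcal L$-structure on $\kappa$ as a single colouring ${}^{<\omega}\kappa\rightarrow{}^\tau 2$ by bundling the truth values of all $\tau$ relations into one coordinate, invoke $\diamondsuit^{<\omega}(\vec L,2^\tau)$ once, and decode each local guess $f_\delta$ into a structure $M_\delta$ on $A_\delta$. Your write-up is slightly more explicit about the compatibility check and the role of relationality, but the underlying argument is identical.
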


The application is rather straightforward.
Let $\mathcal L$ be a relational language with relational symbols $\langle R_i \mid i< \tau\rangle$. The hypothesis of $\diamondsuit^{<\omega}(\vec L,2^\tau)$ provides us with a sequence $\langle f_\delta\mid\delta\in S\rangle$ where for every $\delta \in S$, $f_\delta: {}^{< \omega}A_\delta \rightarrow {}^\tau2$, and
such that for every function $f:{}^{< \omega}\kappa \rightarrow {}^\tau 2$, the set $\{\delta \in S\mid f_\delta=f\restriction {}^{< \omega}A_\delta\}$ is stationary.

For every $\delta \in S$, let $M_\delta$ be the structure with carrier set $A_\delta$ and relations given as follows: for a finite tuple $\langle \alpha_0,\ldots,\alpha_{n-1}\rangle \in {}^{< \omega}A_\delta$
and an index $i<\tau$,
$$M_\delta \models R_i(\langle \alpha_0,\ldots,\alpha_{n-1}\rangle) \iff f_\delta(\langle \alpha_0,\ldots,\alpha_{n-1}\rangle)(i) = 1 \ \&\ \mathrm{arity}(R_i) = n.$$

Now given an $\mathcal L$-structure $M= \langle \kappa, R_i \rangle_{i< \tau}$, define a function $f:{}^{< \omega}\kappa \rightarrow \mathcal {}^\tau2$ by letting
$$f(\langle \alpha_0,\ldots,\alpha_{n-1}\rangle)(i) = 1 \iff M\models R_i(\langle \alpha_0,\ldots,\alpha_{n-1}\rangle) \ \&\ \mathrm{arity}(R_i) = n..$$
Then it is clear that $M_\delta$ is a substructure of $M$ whenever $f_\delta = f\restriction {}^{< \omega}A_\delta$.
\begin{remark}
As indicated by Shelah in \cite[\S 0]{Sh:775},
obtaining diamonds on ladder systems which would allow for strengthening of Fact~\ref{substructuremd}
by requiring the $M_\delta$'s to be \emph{elementary} substructures of $M$ is one of the key open problems in this area.
\end{remark}

As for the matter of when $\diamondsuit^{<\omega}(\vec L,\theta)$ or other higher-dimensional variants hold, we leave to the interested reader the task of translating the arguments from the upcoming subsection to their purpose with the assurance that no extra ingenuity is required.
As the differences are minor we have chosen to focus on the most transparent case.

\subsection{Results}\label{subsec21}

In this subsection, we reproduce some results from \cite{Sh:775} with the goal of proving Fact~\ref{sh775}
and laying the groundwork for Section~\ref{sec3}.

We start by considering two (one-dimensional) generalisations of $\diamondsuit(\vec L,\theta)$ and a generalisation of the Devlin-Shelah weak diamond principle $\Phi$ \cite{MR469756}.
\begin{defn}
Suppose that $\vec L=\langle A_\delta\mid\delta\in S\rangle$
is a ladder system over some stationary $S\s\kappa$,
and that $\mu,\theta$ are cardinals greater than $1$.
\begin{itemize}
\item $\diamondsuit^*(\vec L,\mu,\theta)$ asserts the existence of a sequence $\langle\mathcal P_\delta\mid\delta\in S\rangle$ such that:
\begin{itemize}
\item for every $\delta\in S$, $|\mathcal P_\delta|<\mu$;
\item for every function $f:\kappa\rightarrow\theta$,
there are club many $\delta\in S$ such that $f\restriction A_\delta\in\mathcal P_\delta$.
\end{itemize}
\item $\diamondsuit(\vec L,\mu,\theta)$ asserts the existence of a sequence $\langle\mathcal P_\delta\mid\delta\in S\rangle$ such that:
\begin{itemize}
\item for every $\delta\in S$, $|\mathcal P_\delta|<\mu$;
\item for every function $f:\kappa\rightarrow\theta$,
there are stationarily many $\delta\in S$ such that $f\restriction A_\delta\in\mathcal P_\delta$.
\end{itemize}
\item $\Phi(\vec L,\mu,\theta)$ asserts that for every function $F:(\bigcup_{\delta\in S}{}^{A_\delta}\mu)\rightarrow\theta$,
there exists a function $g:S\rightarrow\theta$ such that, for every function $f:\kappa\rightarrow\mu$,
there are stationarily many $\delta\in S$ such that $F(f\restriction A_\delta)= g(\delta)$.
\end{itemize}
\end{defn}

We encourage the reader to determine the monotonicity properties of the above principles; another easy exercise is to verify that for every ladder system $\vec L$ over a subset of $\kappa$ and every cardinal $\mu$, $\diamondsuit^*(\vec L, \mu^\kappa, \mu)$ holds.

Note that for $\kappa$  a successor cardinal, the principle $\diamondsuit^*(S)$ is simply $\diamondsuit^*(\vec L,\kappa,2)$ for the degenerate ladder system $\vec L=\langle \delta\mid\delta\in S\rangle$.
Also note that $\diamondsuit^*(\vec L,\mu,\theta)\implies \diamondsuit(\vec L,\mu,\theta)$ and $\diamondsuit(\vec L,\theta)\iff\diamondsuit(\vec L,2,\theta)$.
Less immediate from these two observations, but clear after expanding the definitions is that $\diamondsuit(\vec L,\mu)\implies\Phi(\vec L,\mu,\theta)$ for any cardinal $\theta$. This implication admits a converse, as follows.

\begin{lemma}\label{l210} Suppose that
$\vec L=\langle A_\delta\mid\delta\in S\rangle$  is a $\xi$-bounded ladder system over some stationary $S\s\kappa$.
If $\Phi(\vec L,\mu,\mu^{|\xi|})$ holds, then so does $\diamondsuit( \vec L, \mu)$.
\end{lemma}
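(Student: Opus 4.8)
The plan is to use the $\xi$-boundedness to encode, for each $\delta\in S$, every function in ${}^{A_\delta}\mu$ by a single colour in $\theta:=\mu^{|\xi|}$, and then to let $\Phi(\vec L,\mu,\theta)$ select such a colour at each $\delta$ in a way that is correct on a stationary set.

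First I would observe that the pieces ${}^{A_\delta}\mu$, as $\delta$ ranges over $S$, are pairwise disjoint: since each $A_\delta$ is cofinal in $\delta$ we have $\sup(A_\delta)=\delta$, so distinct ladders have distinct domains. This lets me define a single function $F\colon\left(\bigcup_{\delta\in S}{}^{A_\delta}\mu\right)\to\theta$ by prescribing it separately on each ${}^{A_\delta}\mu$. Because $\vec L$ is $\xi$-bounded, $|A_\delta|\le|\xi|$, whence $|{}^{A_\delta}\mu|\le\mu^{|\xi|}=\theta$; I can therefore choose $F\restriction{}^{A_\delta}\mu$ to be injective for every $\delta\in S$.

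Next I would apply $\Phi(\vec L,\mu,\theta)$ to this $F$, obtaining $g\colon S\to\theta$ such that for every $f\colon\kappa\to\mu$ the set of $\delta\in S$ with $F(f\restriction A_\delta)=g(\delta)$ is stationary. Using the injectivity of $F\restriction{}^{A_\delta}\mu$, I define $f_\delta\in{}^{A_\delta}\mu$ by taking $f_\delta$ to be the unique $h\in{}^{A_\delta}\mu$ with $F(h)=g(\delta)$ whenever $g(\delta)\in\im(F\restriction{}^{A_\delta}\mu)$, and letting $f_\delta$ be arbitrary (say, constantly $0$) otherwise. I then claim that $\langle f_\delta\mid\delta\in S\rangle$ witnesses $\diamondsuit(\vec L,\mu)$: given any $f\colon\kappa\to\mu$, for each of the stationarily many $\delta$ with $F(f\restriction A_\delta)=g(\delta)$, the colour $g(\delta)$ lies in $\im(F\restriction{}^{A_\delta}\mu)$ and is realised there by $f\restriction A_\delta$, so injectivity forces $f_\delta=f\restriction A_\delta$, as required.

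There is no serious obstacle here beyond bookkeeping; the only two points requiring care are the disjointness of the domains $A_\delta$ (ensuring $F$ is unambiguously defined on the union) and the cardinal computation $|{}^{A_\delta}\mu|\le\theta$ that makes the injective encoding possible, both of which are immediate from the hypotheses.
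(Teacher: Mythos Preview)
Your proof is correct and follows essentially the same approach as the paper: encode each element of ${}^{A_\delta}\mu$ by a colour in $\theta=\mu^{|\xi|}$, apply $\Phi(\vec L,\mu,\theta)$, then decode. The only cosmetic difference is that the paper fixes injections $\psi_\delta:A_\delta\to\xi$ and a single enumeration $\langle h_\tau\mid\tau<\theta\rangle$ of ${}^\xi\mu$, defining $F(\bar f)=\tau$ so that $h_\tau\circ\psi_\delta=\bar f$ and then $f_\delta:=h_{g(\delta)}\circ\psi_\delta$; this uniform decoding avoids your case split on whether $g(\delta)\in\im(F\restriction{}^{A_\delta}\mu)$, but the substance is identical.
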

\begin{proof} Denote $\theta:=\mu^{|\xi|}$.
Let $\vec h = \langle h_\tau \mid \tau< \theta\rangle$ be some enumeration of  ${}^\xi\mu$.
For every $\delta \in S$, fix an injection $\psi_\delta: A_\delta\rightarrow\xi$.
Fix a function $F:(\bigcup_{\delta \in S}{}^{A_\delta}\mu)\rightarrow \theta$ such that for all $\delta \in S$ and $\bar f: A_\delta \rightarrow \mu$,
$$(F(\bar f) = \tau)\implies(h_\tau\circ\psi_\delta=\bar f).$$
Now, assuming that $\Phi(\vec L, \mu, \theta)$ holds, we may fix a function  $g: S \rightarrow \theta$ such that for every function $f:\kappa\rightarrow\mu$,
the set $\{\delta\in S\mid F(f\restriction A_\delta)= g(\delta)\}$ is stationary in $\kappa$.

For every $\delta\in S$, let $f_\delta:=h_{g(\delta)}\circ \psi_\delta$.
We claim that $\langle f_\delta\mid\delta\in S\rangle$ witnesses that $\diamondsuit(\vec L,\mu)$ holds.
Indeed, given $f: \kappa \rightarrow \mu$, consider the stationary set $S':=\{\delta\in S\mid F(f\restriction A_\delta)= g(\delta)\}$.
For every $\delta\in S'$, it is the case that $$f_\delta=h_{F(f\restriction A_\delta)}\circ \psi_\delta=f\restriction A_\delta,$$
as sought.
\end{proof}

We can at this stage describe the structure of the proof of Fact~\ref{sh775} given below.
To start, in Lemma~\ref{countinglemma} we establish an instance of the principle $\diamondsuit^*(\vec L, \ldots)$.
The caveat here is that the second parameter, the width of the diamond sequence, will be rather large.
Using this very wide diamond and an instance of a colouring principle which is established in Lemma~\ref{lemma114}, we will then in Lemma~\ref{lemma115} derive an instance of the principle $\Phi(\vec L, \mu, \theta)$.
Crucially for us here, the parameter $\theta$, the number of colours, will be large.
Finally, we will use Lemma~\ref{l210} to obtain a narrow diamond sequence on the ladder system.
The details are in Corollary~\ref{cor28}.

The upcoming proof of Fact~\ref{sh775} will make multiple uses of Shelah's revised GCH theorem \cite{Sh:460} that was briefly mentioned in the paper's introduction.
To state it, we shall need the following definition.
\begin{defn} For cardinals $\theta\le\lambda$:
\begin{itemize}
\item $\lambda^{[\theta]}$ stands for the least size of a subfamily $\mathcal A\s[\lambda]^{\le\theta}$
satisfying that every element of $[\lambda]^\theta$ is the union of less than $\theta$ many sets from $\mathcal A$;
\item $m(\lambda,\theta)$ stands for the least size of a subfamily $\mathcal A\s[\lambda]^{\theta}$
satisfying that for every $b\in [\lambda]^\theta$, there is an $a\in\mathcal A$ with $|a\cap b|=\theta$.
\end{itemize}
\end{defn}

Note that for $\theta$ a regular cardinal, $m(\lambda,\theta)\le\lambda^{[\theta]}$.

\begin{fact}[Shelah's RGCH, \cite{Sh:460}]\label{rgch}
For every pair $\Lambda\le\lambda$ of uncountable cardinals such that $\Lambda$ is a strong limit,
for co-boundedly many $\theta\in\reg(\Lambda)$, $\lambda^{[\theta]}=\lambda$.
\end{fact}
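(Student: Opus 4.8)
The plan is to separate the trivial lower bound from the real content. Since any family witnessing $\lambda^{[\theta]}$ must at least cover all singletons, one always has $\lambda^{[\theta]}\ge\lambda$, so the whole task is the upper bound: to produce, for all but boundedly many $\theta\in\reg(\Lambda)$, a family $\mathcal A\s[\lambda]^{\le\theta}$ of size $\lambda$ such that every $b\in[\lambda]^\theta$ is the union of fewer than $\theta$ members of $\mathcal A$. The first move is to recast the invariant as a covering number: for $\theta$ regular one checks that $\lambda^{[\theta]}$ coincides with $\operatorname{cov}(\lambda,\theta^+,\theta^+,\theta)$, the least size of a subfamily of $[\lambda]^{\le\theta}$ covering every $\theta$-sized subset of $\lambda$ by $<\theta$ of its members. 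Covering numbers of exactly this shape are the objects Shelah's PCF theory was designed to compute, so this translation is what puts the statement into a tractable form.

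I would then argue by induction on $\lambda\ge\Lambda$, with $\Lambda$ fixed, proving that the bad set $\{\theta\in\reg(\Lambda)\mid\lambda^{[\theta]}>\lambda\}$ is bounded below $\Lambda$. The strong-limit hypothesis on $\Lambda$ is used in two ways. First, it yields $2^{<\Lambda}=\Lambda$, which disposes of the regime $\theta<\cf(\Lambda)$ outright (there $[\lambda]^{\le\theta}$-bookkeeping stays of size $\lambda$) and keeps all auxiliary index sets small. Second, writing $\Lambda=\aleph_\delta$ with $|\delta|<\Lambda$, it guarantees that the set $\reg\cap[\theta,\Lambda)$ is \emph{progressive} (its size is below its minimum) for every $\theta\in\reg(\Lambda)$ above $|\delta|$, i.e.\ for co-boundedly many $\theta$; progressiveness is precisely the standing hypothesis under which the PCF structure theory applies without obstruction. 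At the inductive step one reduces covering $[\lambda]^{\le\theta}$ (for $\lambda$ a successor, or singular) to covering problems at strictly smaller cardinals, glued together by a single computation of $\cf\bigl(\prod\mathfrak a/\mathcal I\bigr)$ over a progressive set $\mathfrak a$ of regulars below $\lambda$.

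The heart of the matter is that final PCF computation. One represents the covering number as $\max\operatorname{pcf}(\mathfrak a)$ for a suitable progressive $\mathfrak a\s\reg\cap\lambda$ and then invokes the three pillars of the structure theory — the existence of generators $\langle B_\chi\mid\chi\in\operatorname{pcf}(\mathfrak a)\rangle$, the no-hole/compactness theorem, and the Localization theorem — to show that $\max\operatorname{pcf}(\mathfrak a)\le\lambda$ once $\theta$ is taken above a bound $<\Lambda$ depending only on $\lambda$. Feeding this back through the induction collapses $\operatorname{cov}(\lambda,\theta^+,\theta^+,\theta)$ to $\lambda$ for all but boundedly many $\theta\in\reg(\Lambda)$, which is exactly the assertion.

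The main obstacle is this PCF core. Proving that covering numbers are computed by $\operatorname{pcf}$, and that the relevant $\operatorname{pcf}$ sets stabilise at or below $\lambda$, requires the full apparatus of generators, transitive generators, and the localization machinery, which cannot be compressed into a short argument and is genuinely Shelah's deepest contribution. Accordingly, the honest structure of the proof is to treat the PCF structure theory as established machinery, and to regard the covering-number reduction in the first paragraph together with the strong-limit-driven induction of the second as the work specific to this statement; a fully self-contained development belongs to \cite{Sh:460}, which is why it is reasonable to quote the result here as a Fact.
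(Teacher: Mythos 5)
The paper does not prove this statement at all: it is imported as a black-box Fact from \cite{Sh:460}, which is exactly the conclusion you reach in your final paragraph, so in that sense your treatment and the paper's coincide. Your outline of the known proof --- the reduction of $\lambda^{[\theta]}$ to a covering number, the induction on $\lambda\ge\Lambda$ showing the bad set of $\theta$'s is bounded, and the PCF core resting on generators and the Localization theorem --- is a fair high-level description of Shelah's argument, and correctly identifies that this machinery cannot be compressed into the present paper. One inaccuracy worth flagging: your claim that $\Lambda=\aleph_\delta$ with $|\delta|<\Lambda$, and hence that $\reg\cap[\theta,\Lambda)$ is progressive for co-boundedly many $\theta$, fails whenever the strong limit $\Lambda$ is a fixed point of the aleph function (then there are $\Lambda$ many regular cardinals in every tail); in Shelah's proof the progressive sets $\mathfrak a$ are chosen locally within the induction, with $|\mathfrak a|<\min(\mathfrak a)$ arranged by hand, rather than being obtained as tails of $\reg(\Lambda)$. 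This does not affect the overall shape of your sketch, since the statement is in any case being quoted rather than proved.
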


As a warm up, we prove the following lemma that may be extracted from the proof of \cite[Claim~1.11]{Sh:775}.
It concerns the principle $\onto(\ldots)$ that was recently introduced in the paper \cite{paper47} by the second and third authors.

\begin{lemma}\label{lemma114} Suppose that $\theta,\lambda$ are infinite cardinals such that $2^\theta\le m(\lambda,\theta)=\lambda$.

Then $\onto(\{\lambda\},[2^\lambda]^{\le\lambda},\theta)$ holds, i.e., there is a colouring $c:\lambda\times 2^\lambda\rightarrow\theta$
such that, for every $B\in[2^\lambda]^{\lambda^+}$,\footnote{This is not a typo. The second parameter of the principle $\onto$ is the ideal $J=[2^\lambda]^{\le\lambda}$, and the quantification here is over all sets $B$ that are $J$-positive, hence, the focus on $[2^\lambda]^{\lambda^+}$.}
there exists an $\alpha<\lambda$ such that $c[\{\alpha\}\times B]=\theta$.

\end{lemma}
\begin{proof} Let $\mathcal A$ be a witness for $m(\lambda,\theta)=\lambda$. The next claim is of independent interest.
It may be proved using elementary submodels, but we give a more elementary (for the reason of avoiding elementary submodels) proof due to Ido Feldman.
\begin{claim} \label{invclaim}For every $\mathcal H\s {}^\lambda2$ of size $\lambda^+$, there exists an $a\in\mathcal A$
such that the set $\{h \restriction a \mid h \in \mathcal H\}$ has size at least $\theta$.
\end{claim}
\begin{why}[Proof (Feldman)]  For two distinct functions $g,h\in {}^\lambda2$,
denote
$$\Delta(g,h):=\min\{\delta<\lambda\mid g(\delta)\neq h(\delta)\}.$$
Now, let a family $\mathcal H\s {}^\lambda2$ of size $\lambda^+$ be given.

$\br$ If there exists a function $g:\lambda\rightarrow2$ such that $D(g):=\{ \Delta(g,h)\mid h\in\mathcal H\setminus\{g\}\}$ has size greater than or equal to $\theta$,
then pick $a\in\mathcal A$ such that $|a\cap D(g)|=\theta$,
and for each $\delta\in a\cap D(g)$, pick some $h_\delta\in\mathcal H$ such that $\Delta(g,h_\delta)=\delta$.
Then $\delta\mapsto h_\delta\restriction a$ is injective over $a\cap D(g)$.

$\br$ Otherwise, for every $g:\lambda\rightarrow2$, let $\pi_g:\otp(D(g))\rightarrow D(g)$ be the increasing enumeration of $D(g)$,
so that $\bar g:=g\circ \pi_g$ is an element of ${}^{<\theta}2$.
As $2^{<\theta}\le2^\theta\le\lambda$, we may find $g\neq h$ in $\mathcal H$ such that $\bar g=\bar h$.
Consider $\delta:=\Delta(g,h)$. Then $\delta\in D(g)\cap D(h)$.
In addition, since $g\restriction\delta=h\restriction\delta$, $D(g)\cap\delta=D(h)\cap\delta$.
In particular, for $\xi:=\otp(D(g)\cap\delta)$, we get that $\pi_g(\xi)=\delta=\pi_h(\xi)$
and hence $g(\delta)=\bar g(\xi)=\bar h(\xi)=h(\delta)$, contradicting the definition of $\delta$.
So this case does not exist.
\end{why}

For each $a \in\mathcal A$, let $\mathcal G^a$ be the collection of all functions $g:{}^a\theta \rightarrow \theta$ such that
$$|\{ f\in{}^a\theta \mid g(f)\neq 0\}| \leq \theta.$$
Clearly, $|\mathcal G^a|=2^\theta\le\lambda$.
For each $g\in\mathcal G^a$, we lift $g$ to a function $\hat g:{}^\lambda\theta\rightarrow\theta$ by letting
$$\hat g(h):= g(h\restriction a).$$

Now, let $\langle g_\alpha \mid \alpha< \lambda\rangle$ be an injective enumeration of
$\{\hat g\mid a \in \mathcal A,\, g \in\mathcal G^a\}$,
and let $\langle h_\beta\mid \beta<2^\lambda\rangle$ be an injective enumeration of ${}^\lambda2$.
Define a colouring $c:\lambda\times 2^\lambda\rightarrow\theta$ via $c(\alpha,\beta):=g_\alpha(h_\beta)$.
To see that $c$ as as sought, let $B\in[2^\lambda]^{\lambda^+}$.
By Claim~\ref{invclaim}, pick $a\in\mathcal A$ such that $\{ h_\beta\restriction a\mid \beta\in B\}$ has size at least $\theta$.
Pick $B'\s B$ of ordertype $\theta$ on which $\beta\mapsto h_\beta\restriction a$ is injective.
It follows that we may define a function $g:{}^a\theta\rightarrow\theta$ in $\mathcal G^a$ via
$$g(f):=\begin{cases}\otp(B'\cap\beta),&\text{if }\beta \in B' \text{ and }f= h_\beta\restriction a;\\
0,&\text{otherwise}.\end{cases}$$

Pick $\alpha<\lambda$ such that $\hat g=g_\alpha$. Then, $c[\{\alpha\}\times B']=\theta$.
\end{proof}

Our next step is proving the following lemma that is easily extracted from the beginning of the proof of \cite[Claim~1.10]{Sh:775}.
It uses two fine approximations in $\zfc$ of G\"odel's constructible universe: the revised $\gch$ theorem and the approachability ideal $I[\kappa]$.

\begin{lemma} \label{countinglemma}
Suppose that $\Lambda\le\lambda$ is a pair of uncountable cardinals such that $\Lambda$ is a strong limit.
Denote $\kappa := \cf(2^\lambda)$.
Then, for co-boundedly many $\mu \in \reg(\Lambda)$, there is a $\mu$-bounded $C$-sequence $\vec C$ over some stationary $S\s E^\kappa_\mu$ such that $\diamondsuit^*(\vec C, 2^\lambda, 2^\lambda)$ holds.
\end{lemma}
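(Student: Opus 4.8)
The plan is to combine three ingredients for which the statement is tailored: Shelah's revised GCH (Fact~\ref{rgch}) to select a good $\mu$, the approachability ideal $I[\kappa]$ to manufacture the $C$-sequence, and the fact that $\mu<\cf(2^\lambda)=\kappa$ to keep every relevant restriction of bounded range. First I would fix the regular cardinals $\mu$ for which the conclusion is claimed. Applying Fact~\ref{rgch} to the pair $\Lambda\le\lambda$ yields $\lambda^{[\mu]}=\lambda$ for co-boundedly many $\mu\in\reg(\Lambda)$; I restrict attention to these $\mu$, recording two consequences of $\Lambda$ being a strong limit above $\mu$: $2^\mu<\Lambda\le\lambda$, and, by König's theorem, $\mu^+\le2^\mu<\Lambda\le\lambda<\cf(2^\lambda)=\kappa$. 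In particular $\mu^+<\kappa$, so by the basic theory of the approachability ideal there is a stationary $S\s E^\kappa_\mu$ with $S\in I[\kappa]$; fix a witnessing sequence $\langle a_\beta\mid\beta<\kappa\rangle$. For each $\delta\in S$ I extract a cofinal subset of $\delta$ of order type $\mu$ all of whose proper initial segments lie in $\{a_\beta\mid\beta<\delta\}$, and close it off to a club $C_\delta\s\delta$ of order type $\mu$; the sequence $\vec C=\langle C_\delta\mid\delta\in S\rangle$ is the desired $\mu$-bounded $C$-sequence, and what I will use is that $C_\delta$ is the increasing union of its proper initial segments, each coded below $\delta$.

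Next I build the guessing family. Identifying the colour set $2^\lambda$ with ${}^\lambda2$ and fixing a cofinal sequence $\langle\chi_i\mid i<\kappa\rangle$ of cardinals in $2^\lambda$, I think of $f\restriction C_\delta$, for $f\colon\kappa\to2^\lambda$, as assembled—through the coherence of $\vec C$—from its restrictions to the proper initial segments of $C_\delta$. The family $\mathcal P_\delta$ is then defined not as the set of \emph{all} functions $C_\delta\to2^\lambda$, but as the collection of restrictions that can be assembled coherently along $C_\delta$ out of a fixed reservoir of ``pieces'' built from a witness $\mathcal A$ to $\lambda^{[\mu]}=\lambda$; threading $\mathcal A$ through the construction is what keeps $|\mathcal P_\delta|$ strictly below $2^\lambda$ (I expect the bound $|\mathcal P_\delta|\le\lambda$).

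For the guessing property I would argue as in a $\diamondsuit^*$-verification. Given $f\colon\kappa\to2^\lambda$, let $D$ be the club of $\delta<\kappa$ closed under the map sending $\alpha$ to the least $i$ with $f(\alpha)<\chi_i$ and under the relevant assembling functions. For $\delta\in S\cap D$ the range of $f\restriction C_\delta$ is bounded below $\chi_\delta:=\sup_{i<\delta}\chi_i$ (this is where $\mu<\cf(2^\lambda)$ enters), and the coherence of $\vec C$ together with the closure of $\delta$ forces every coordinate-piece of $f\restriction C_\delta$ into the reservoir, so that $f\restriction C_\delta\in\mathcal P_\delta$. Thus $\{\delta\in S\mid f\restriction C_\delta\in\mathcal P_\delta\}\supseteq S\cap D$ contains a club, as $\diamondsuit^*$ requires.

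The hard part is precisely the width bound $|\mathcal P_\delta|<2^\lambda$, and it is the only place where the revised GCH really does the work. The naive family of all restrictions is useless, since $|{}^{C_\delta}2^\lambda|=(2^\lambda)^\mu=2^\lambda$; and merely bounding the range does not help, because there are already $2^\lambda$ functions $C_\delta\to\chi_\delta$ in general. That the conclusion is asserted for \emph{co-boundedly} many $\mu$ (rather than for an initial segment) is itself a warning that one cannot get by with counting ranges: $\mu\mapsto\lambda^\mu$ is monotone, so any approach needing $\lambda^\mu<2^\lambda$ could only ever work on an initial segment of $\reg(\Lambda)$. Instead, $\lambda^{[\mu]}=\lambda$ must be used exactly as $\gch$ is used in the classical proof of $\diamondsuit^*_{\nu^+}$ under $2^\nu=\nu^+$: to produce a \emph{structured} reservoir of size $\lambda$ from which every $f$'s trace on $C_\delta$ can be coherently reconstructed on a club, even though those traces exhaust all $2^\lambda$ possibilities at each single $\delta$. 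Making the bookkeeping of this reservoir-plus-coherence simultaneously deliver $|\mathcal P_\delta|\le\lambda$ and club-capturing is, I expect, the crux of the argument.
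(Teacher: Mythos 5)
Your skeleton matches the paper's: RGCH to select $\mu$, the approachability ideal to produce a weakly coherent $\mu$-bounded $C$-sequence over a stationary $S\s E^\kappa_\mu$, a cofinal map $h:\kappa\to2^\lambda$ to bound ranges on a club, and a definition of $\mathcal P_\delta$ as those $f:C_\delta\to h(\delta)$ all of whose traces on the initial segments $C_\delta\cap\beta$, $\beta\in\nacc(C_\delta)$, come from a fixed reservoir of size $<2^\lambda$. But the step you explicitly defer --- the width bound --- is the entire content of the lemma, and the ingredient you propose for it is the wrong instance of RGCH. Applying Fact~\ref{rgch} only to the pair $\Lambda\le\lambda$ gives $\lambda^{[\mu]}=\lambda$, whereas the reservoir relevant at $\delta$ consists of roughly $|h(\delta)|$ many pieces, and $|h(\delta)|$ ranges over cardinals cofinal in $2^\lambda$, hence far above $\lambda$. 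What is actually needed is a single $\mu$ with $m(\varkappa,\mu)<2^\lambda$ \emph{simultaneously for every} $\varkappa<2^\lambda$; the paper's Claim~\ref{c261} gets this by applying RGCH to each pair $\Lambda\le\varkappa$ for $\varkappa\in[\Lambda,2^\lambda)$, using $\cf(2^\lambda)>\Lambda$ to extract one threshold $\epsilon<\Lambda$ that works for an unbounded set of $\varkappa$, and then invoking monotonicity of $\varkappa\mapsto\varkappa^{[\mu]}$. Without this uniformization your reservoir has no reason to control $\mathcal P_\delta$ at those $\delta$ where $h(\delta)>\lambda$.

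Moreover, your expected bound $|\mathcal P_\delta|\le\lambda$ is not what comes out and cannot hold in general: if $2^\lambda$ is singular, so that $\kappa<2^\lambda$, then already the $2^\lambda$ many constant functions $f:\kappa\to2^\lambda$, each of which must be captured at some $\delta$, would give $2^\lambda$ many pairs $(f,\delta)$ with $f\restriction C_\delta\in\mathcal P_\delta$, against the at most $\kappa\cdot\lambda<2^\lambda$ such pairs available when every $\mathcal P_\delta$ has size $\le\lambda$. The correct bound is merely $|\mathcal P_\delta|<2^\lambda$, and the mechanism is not that each trace is assembled from few pieces but an almost-disjointness count: for distinct $f,f'\in\mathcal P_\delta$, the chains $b_f=\{f\restriction(C_\delta\cap\beta)\mid\beta\in\nacc(C_\delta)\}$ and $b_{f'}$ are $\mu$-sized subsets of the set $Q_\delta$ of relevant pieces (a set of some size $\varkappa<2^\lambda$) satisfying $|b_f\cap b_{f'}|<\mu$, so a witness to $m(\varkappa,\mu)<2^\lambda$ bounds the number of such chains, and hence $|\mathcal P_\delta|$, below $2^\lambda$. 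This almost-disjointness argument, together with the uniform control of $m(\varkappa,\mu)$ for all $\varkappa<2^\lambda$, is the missing content; it is precisely where the meeting numbers and the ``co-boundedly many $\mu$'' quantifier do their work.
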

\begin{proof}
We start with the following claim which guides our choice of $\mu$.
\begin{claim}\label{c261}
There is a co-bounded set of $\mu \in \reg(\Lambda)$ such that for every cardinal $\varkappa < 2^\lambda$, $\varkappa^{[\mu]} < 2^\lambda$.
\end{claim}
\begin{why}
By Fact~\ref{rgch}, for every $\varkappa\in[\Lambda,2^\lambda)$, there is a cardinal $\epsilon_\varkappa < \Lambda$ such that for every $\mu\in\reg(\Lambda)\setminus \epsilon_\varkappa$,  $\varkappa^{[\mu]}=\varkappa$. Now, as $\cf(2^\lambda)>\lambda\ge\Lambda$,
it follows that there is $\Gamma$ an unbounded subset of cardinals in $2^\lambda$ and a cardinal $\epsilon< \Lambda$ such that for every $\varkappa \in \Gamma$, $\epsilon_\varkappa < \epsilon$. In particular, for every $\mu \in \reg(\Lambda) \setminus \epsilon$, for every $\varkappa \in \Gamma$, $\varkappa^{[\mu]} = \varkappa < 2^\lambda$. This, combined with the observation that for any cardinals $\varkappa_0< \varkappa_1$ and cardinal $\mu$, $\varkappa_0^{[\mu]} \leq \varkappa_1^{[\mu]}$, verifies the claim.
\end{why}

Let $\mu$ be any cardinal in the co-bounded subset of $\reg(\Lambda)$ given by the claim.
Hereafter, all we shall need to assume about $\mu$ is that it is a regular cardinal smaller than $\lambda$ and $m(\varkappa,\mu)<2^\lambda$ for all $\varkappa<2^\lambda$.
As $\mu^+\le\lambda<\kappa$, by \cite[Claim~1.2 and Lemma~1.4]{Sh:420}, there exists a stationary $S\s E^\kappa_\mu$ that lies in $I[\kappa]$.
By possibly intersecting $S$ with some club, this means that there exists a $\mu$-bounded $C$-sequence $\vec C=\langle C_\delta\mid\delta\in S\rangle$
satisfying the following weak coherence property: for every pair $\gamma<\delta$ of ordinals from $S$,
for all $\beta\in\nacc(C_{\gamma})\cap\nacc(C_\delta)$, $C_{\gamma}\cap\beta=C_\delta\cap\beta$.
We shall prove that $\diamondsuit^*(\vec C, 2^\lambda, 2^\lambda)$ holds.

To this end, we fix the following objects.
\begin{enumerate}[(i)]
\item Let $h: \kappa \rightarrow 2^\lambda$ be increasing and cofinal.
\item Let $\mathcal C:= \{C_\delta \cap \beta \mid \delta \in S, \beta\in \nacc(C_\delta)\}$, so that $|\mathcal C| =\kappa$ and it consists of sets of size less than $\mu$.
\item Let $T:= \{f \mid \exists C \in \mathcal C\, [f \in {}^C(2^\lambda)]\}$. For each $C \in \mathcal C$, since $|C|<\mu<\lambda$,
it is the case that $2^\lambda \leq |{}^C(2^\lambda)| \leq (2^\lambda)^{|C|} = 2^\lambda$. As $|\mathcal C| = \kappa$, we conclude that $|T| =2^\lambda$.
\item Let $\langle f_i \mid i< 2^\lambda\rangle$ be an enumeration of $T$.
\item For $\delta< \kappa$, denote $T_{< \delta}:= \{f_i \mid i < h(\delta)\}$, so that $|T_{<\delta}|<2^\lambda$.
\end{enumerate}

For every $\delta \in S$, let
$$\mathcal P_\delta:= \{f \in {}^{C_\delta}h(\delta) \mid \forall \beta \in \nacc(C_\delta)\, [f \restriction (C_\delta \cap \beta) \in T_{< \delta}]\}.$$
We shall show that the sequence $\langle \mathcal P_\delta \mid \delta \in S\rangle$ is a witness for $\diamondsuit^*(\vec C, 2^\lambda, 2^\lambda)$.
We begin by estimating its width.
Implicit in the upcoming proof are the `tree powers' from the hypotheses of \cite[Claim 1.10]{Sh:775}.

\begin{claim}\label{treeclaim}
Let $\delta \in S$. Then $|\mathcal P_\delta| < 2^\lambda$.
\end{claim}
\begin{why}
Consider the set $Q_\delta:= \{g \in T_{<\delta} \mid \exists \beta \in \nacc(C_\delta)\,[g \in {}^{C_\delta \cap \beta}h(\delta)]\}$.
For every $f\in\mathcal P_\delta$,
$b_f:=\{q \in Q_\delta\mid q \s f\}$ is nothing but $\{ f\restriction\beta\mid \beta\in\nacc(C_\delta)\}$.
So from $\otp(C_\delta)=\cf(\delta)=\mu$,
we infer that $(b_f,{\s})$
is order-isomorphic to $(\mu,{\in})$, satisfying that $\bigcup b=f$ for every $b\in[b_f]^\mu$.
In particular, $|b_f\cap b_{f'}|<\mu$ for all $f\neq f'$ from $\mathcal P_\delta$.

Set $\varkappa:=|Q_\delta|$. As $\varkappa\le|T_{<\delta}|<2^\lambda$, the choice of $\mu$ ensures that $m(\varkappa,\mu)<2^\lambda$.
In particular, we may fix a subfamily $\mathcal A_\delta\s[Q_\delta]^\mu$ of size less than $2^\lambda$ such that, for every $f\in\mathcal P_\delta$, there exists $a_f\in\mathcal A_\delta$ with $|a_f\cap b_f|=\mu$. Then $f\mapsto a_f$ forms an injection from $\mathcal P_\delta$ to $\mathcal A_\delta$,
so that $|\mathcal P_\delta|<2^\lambda$.
\end{why}

We are left with verifying that $\langle \mathcal P_\delta \mid \delta \in S\rangle$ has the required guessing property. So let $f: \kappa\rightarrow{}2^\lambda$.
Let $D\s \kappa$ be a club such that $\delta \in D$ implies that
\begin{enumerate}[(i)]
\item for every $\beta< \delta$, $f(\beta) < h(\delta)$;
\item for every $\beta< \delta$, for every $\delta'\in S$ such that $\beta \in \nacc(C_{\delta'})$, we have $f \restriction (C_{\delta'}\cap \beta) \in T_{< \delta}$.
\end{enumerate}
Here we use the weak coherence property of $\vec C$ to ensure that the requirement in (ii) can indeed be satisfied.
Now suppose that $\delta \in D$.
Then for every $\beta \in \nacc(C_\delta)$, $f\restriction (C_\delta \cap \beta) \in T_{< \delta}$ and $\im(f \restriction (C_\delta \cap \beta)) \s h(\delta)$. So indeed $f \restriction C_\delta \in \mathcal P_\delta$.
\end{proof}

The last step is proving the next lemma that we extracted from the end of the proof of \cite[Claim~1.10]{Sh:775}.
The new ingredient here is the use of the principle $\onto(\ldots)$.
It generalizes Kunen's lemma \cite[Theorem~7.14]{kunen} that $\diamondsuit^-$ implies $\diamondsuit$ which amounts to the case $\omega_1=\kappa=\lambda^+=2^\lambda=\theta$, using \cite[Lemma~8.3(1)]{paper47}.

\begin{lemma}\label{lemma115} Suppose that:
\begin{itemize}
\item $\vec L=\langle A_\delta\mid\delta\in S\rangle$ is a ladder system over some stationary $S\s\kappa$;
\item $\diamondsuit(\vec L,2^\lambda,2^\lambda)$ holds with $\lambda<\kappa$;
\item $\onto(\{\lambda\},[2^\lambda]^{\le\lambda},\theta)$ holds.
\end{itemize}
Then $\Phi(\vec L,2^\lambda,\theta)$ holds.
\end{lemma}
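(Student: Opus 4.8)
The plan is to regard the desired $g$ as a \emph{predictor} of the colours $F(f\restriction A_\delta)$ and to manufacture it by compressing the exact-but-wide guesses delivered by $\diamondsuit(\vec L,2^\lambda,2^\lambda)$ down to the $\theta$ colours of $F$ by means of the colouring supplied by $\onto(\{\lambda\},[2^\lambda]^{\le\lambda},\theta)$. So I would first fix a sequence $\langle\mathcal P_\delta\mid\delta\in S\rangle$ witnessing $\diamondsuit(\vec L,2^\lambda,2^\lambda)$ (thus $|\mathcal P_\delta|<2^\lambda$, and every $f:\kappa\rightarrow 2^\lambda$ has $f\restriction A_\delta\in\mathcal P_\delta$ for stationarily many $\delta$), together with a colouring $c:\lambda\times 2^\lambda\rightarrow\theta$ as in Lemma~\ref{lemma114}. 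Since each $|A_\delta|\le\lambda$, we have $|\bigcup_{\delta\in S}{}^{A_\delta}(2^\lambda)|=2^\lambda$, so I may fix a single injection $\jmath:\bigcup_{\delta\in S}{}^{A_\delta}(2^\lambda)\hookrightarrow 2^\lambda$. Writing $z^f_\delta:=\jmath(f\restriction A_\delta)$, the point of $\jmath$ is that for a fixed $f$ the codes $B_f:=\{z^f_\delta\mid\delta\in S\}$ are pairwise distinct, whence $|B_f|=\kappa\ge\lambda^+$ and $B_f$ is $[2^\lambda]^{\le\lambda}$-positive; thus $\onto$ applies to it and yields a row index $\alpha_f<\lambda$ with $c[\{\alpha_f\}\times B_f]=\theta$.

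For the definition of $g$ I would enrich the guessing by identifying $2^\lambda$ with $2^\lambda\times\lambda$, so that a guessed function decomposes as a pair $(f,a)$ with $a:\kappa\rightarrow\lambda$ a \emph{row pointer}, and then define $g(\delta)$ to be the $c$-colour obtained by feeding a pointer value into the first coordinate of $c$ and a $\jmath$-code of the guessed restriction into the second. The verification would then run as follows: given an arbitrary target $f:\kappa\rightarrow 2^\lambda$, invoke $\onto$ to obtain $\alpha_f$ as above, feed $f$ the constant pointer $a\equiv\alpha_f$, note that the enriched target $(f,a)$ is guessed on a stationary set of $\delta$'s, and aim to match $g(\delta)$ with $c(\alpha_f,z^f_\delta)$ there; the surjectivity $c[\{\alpha_f\}\times B_f]=\theta$ is what one would leverage to force agreement with $F(f\restriction A_\delta)$ at stationarily many of these $\delta$. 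This is the natural generalisation of the Kunen-style passage from a narrow-width guess to a single prediction that underlies $\diamondsuit^-\Rightarrow\diamondsuit$.

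The crux, and the step I expect to be the main obstacle, is precisely the single-valuedness of $g(\delta)$ against the \emph{wide} family $\mathcal P_\delta$: a single colour $g(\delta)$ must reproduce $F(f\restriction A_\delta)$ for the correct, but a priori unidentified, member of $\mathcal P_\delta$, and there is no a priori relation between the arbitrary given $F$ and the fixed onto-colouring $c$. This is exactly where $\onto$ is indispensable, its guarantee that some row of $c$ is onto $\theta$ on the $\lambda^+$-sized code set being the mechanism that can make one colour correct stationarily often. The genuinely delicate point is \emph{aligning} this surjectivity with $F$: a naive global alignment fails, since one cannot in general locate a single row of $c$ whose fibres accommodate the value-distribution $\tau\mapsto|F^{-1}(\tau)\cap\mathcal P_\delta|$ of an arbitrary $F$, so I expect the coding of the restrictions to have to be interwoven with $F$ (choosing $\jmath$, and the row pointers, in tandem with the fibre structure of $c$) rather than chosen once and for all. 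A secondary but routine point is upgrading mere surjectivity — which yields each colour only once — to a \emph{stationary} matching set; this I would handle by applying $\onto$ not to $B_f$ but to its trace on an arbitrary club, each such trace still being $[2^\lambda]^{\le\lambda}$-positive.
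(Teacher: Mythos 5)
Your proposal assembles the right ingredients and, to your credit, pinpoints exactly where it is incomplete: the ``alignment'' of the fixed colouring $c$ with the arbitrary $F$ across the wide family $\mathcal P_\delta$. But that alignment is not a secondary obstacle to be smoothed over --- it is the entire content of the lemma --- and the mechanism you propose for it does not work. Applying $\onto$ to the code set $B_f=\{\jmath(f\restriction A_\delta)\mid\delta\in S\}$ buys you only that some row $\alpha_f$ of $c$ is onto $\theta$ over $B_f$; this says nothing about \emph{where} on $B_f$ the particular colour $F(f\restriction A_\delta)$ is attained, so even with the row-pointer enrichment there is no reason for $c(\alpha_f,\jmath(f\restriction A_\delta))$ to agree with $F(f\restriction A_\delta)$ at a single $\delta$, let alone stationarily many. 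Moreover your $g(\delta)$ is never actually defined: $\mathcal P_\delta$ has $<2^\lambda$ members and nothing singles out ``the guessed restriction'' whose $\jmath$-code is to be fed into the second coordinate of $c$.

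The paper uses $\onto$ in the opposite direction, and this is the missing idea. Fix a bijection $\psi:{}^\lambda(2^\lambda)\leftrightarrow2^\lambda$, so that each $\eta\in\mathcal P_\delta$ decodes into $\lambda$ many slices $\eta^\alpha:=\psi_\alpha\circ\eta$ ($\alpha<\lambda$), and record $h_\eta:=\langle F(\eta^\alpha)\mid\alpha<\lambda\rangle\in{}^\lambda\theta$. For each $\eta$ the set of columns avoiding $h_\eta$, namely $B_\eta:=\{\beta<2^\lambda\mid\forall\alpha<\lambda\,[c(\alpha,\beta)\neq h_\eta(\alpha)]\}$, has size at most $\lambda$: were it of size $\lambda^+$, $\onto$ would yield a row $\alpha$ with $c[\{\alpha\}\times B_\eta]=\theta\ni h_\eta(\alpha)$, a contradiction. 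Since $|\mathcal P_\delta|<2^\lambda$, a single column $\beta_\delta$ lies outside every $B_\eta$, so $g_\delta:=c(\cdot,\beta_\delta)$ meets every $h_\eta$ at some row. The final step is a diagonalization over the row index: if for every $\alpha<\lambda$ the candidate $g_\alpha(\delta):=c(\alpha,\beta_\delta)$ failed, one would pick counterexamples $f_\alpha$ with clubs $D_\alpha$, merge them into the single function $\eta(\gamma):=\psi(\langle f_i(\gamma)\mid i<\lambda\rangle)$, and apply the diamond once to $\eta$ to reach a contradiction. Your row-pointer device is a shadow of this last diagonalization, but without the ``avoidance sets are small'' use of $\onto$ and the $\psi$-merging of $\lambda$ many functions into one, the argument cannot be completed along the lines you describe.
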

\begin{proof} Fix a bijection $\psi:{}^\lambda(2^\lambda)\leftrightarrow2^\lambda$.
For every $\alpha<\lambda$, define a map $\psi_\alpha:2^\lambda\rightarrow2^\lambda$ via
$$\psi_\alpha(\xi):=\psi^{-1}(\xi)(\alpha).$$
The point is that for every function $\sigma:\lambda\rightarrow2^\lambda$ and every $\alpha<\lambda$, $$\psi_\alpha(\psi(\sigma))=\sigma(\alpha).$$

For every $x\s\kappa$, for every map $\eta:x\rightarrow2^\lambda$, for every $\alpha<\lambda$, we let $\eta^\alpha:=\psi_\alpha\circ\eta$, so $\eta_\alpha: x \rightarrow 2^\lambda$ as well.

Now, let $F:(\bigcup_{\delta\in S}{}^{A_\delta}2^\lambda)\rightarrow\theta$ be given. Let $\langle\mathcal P_\delta\mid\delta\in S\rangle$ be a witness for $\diamondsuit(\vec L,2^\lambda,2^\lambda)$.
Without loss of generality, we may assume that for every $\delta\in S$, each element of $\mathcal P_\delta$ is a function from $A_\delta$ to $2^\lambda$.
In particular, for all $\delta\in S$, $\eta\in\mathcal P_\delta$, and $\alpha<\lambda$, $\eta^\alpha$ is a map from $A_\delta$ to $2^\lambda$
so that $F(\eta^\alpha)$ is a well-defined ordinal
less than $\theta$.
In other words, for all $\delta\in S$ and $\eta\in\mathcal P_\delta$,
$$h_\eta:=\langle F(\eta^\alpha)\mid \alpha<\lambda\rangle$$
is a map from $\lambda$ to $\theta$.
\begin{claim} Let $\delta\in S$. There exists a function $g_\delta:\lambda\rightarrow\theta$ such that,
for every $\eta\in\mathcal P_\delta$,
there exists an $\alpha<\lambda$ with $h_\eta(\alpha)= g_\delta(\alpha)$.
\end{claim}
\begin{why} Fix a witness $c:\lambda\times2^\lambda\rightarrow\theta$ to $\onto(\{\lambda\},[2^\lambda]^{\le\lambda},\theta)$.
Notice that for every $\eta\in\mathcal P_\delta$,
the set $B_\eta:=\{\beta<2^\lambda\mid \forall\alpha<\lambda\,[h_\eta(\alpha)\neq c(\alpha,\beta)]\}$ has size no more than $\lambda$,
since otherwise, by the choice of $c$, we may pick an $\alpha<\lambda$ such that $c[\{\alpha\}\times B_\eta]=\theta$,
and in particular, $h_\eta(\alpha) \in c[\{\alpha\}\times B_\eta]$.
Now, as $|\mathcal P_\delta|<2^\lambda$, it follows that we may pick $\beta\in 2^\lambda\setminus\bigcup_{\eta\in\mathcal P_\delta}B_\eta$.
Define $g_\delta:\lambda\rightarrow\theta$ via $g_\delta(\alpha):=c(\alpha,\beta)$. Then $g_\delta$ is as sought.
\end{why}
Switching the roles of $\delta$ and $\alpha$ in the preceding claim,
we may fix a sequence $\langle g_\alpha:S\rightarrow\theta\mid \alpha<\lambda\rangle$ such that,
for every $\delta\in S$, for every $\eta\in\mathcal P_\delta$,
there exists an $\alpha<\lambda$ with $h_\eta(\alpha)= g_\alpha(\delta)$.
\begin{claim}  There exists an $\alpha<\lambda$ such that for every function $f:\kappa\rightarrow2^\lambda$, the following set is stationary in $\kappa$:
$$\{\delta\in S\mid F(f\restriction A_\delta)=g_\alpha(\delta)\}.$$
\end{claim}
\begin{why} Suppose not. Then, for every $\alpha<\lambda$, we may fix a function $f_\alpha:\kappa\rightarrow2^\lambda$ and a club $D_\alpha\s\kappa$ disjoint from
$\{\delta\in S\mid F(f_\alpha\restriction A_\delta)=g_\alpha(\delta)\}$. Define a map $\eta:\kappa\rightarrow2^\lambda$ via:
$$\eta(\gamma):=\psi(\langle f_i(\gamma)\mid i<\lambda\rangle).$$

Now, as $\langle\mathcal P_\delta\mid\delta\in S\rangle$ witnesses $\diamondsuit(\vec L,2^\lambda,2^\lambda)$,
we may pick $\delta\in\bigcap_{\alpha<\lambda}D_\alpha\cap S$
such that $\bar\eta:=\eta\restriction A_\delta$ is in $\mathcal P_\delta$.
For every $\alpha<\lambda$, recall that $\bar\eta^\alpha$ is defined as $\psi_\alpha\circ\bar\eta$, so that, for every $\gamma\in A_\delta$,
$$\bar\eta^\alpha(\gamma)=\psi_\alpha(\psi(\langle f_i(\gamma)\mid i<\lambda\rangle))=f_\alpha(\gamma).$$
That is, for every $\alpha<\lambda$, $\bar\eta^\alpha=f_\alpha\restriction A_\delta$.
As $\bar\eta\in\mathcal P_\delta$, we may pick some $\alpha<\lambda$ such that $h_{\bar\eta}(\alpha)=g_\alpha(\delta)$.
But, by the definition of $h_{\bar\eta}$, $$h_{\bar\eta}(\alpha)=F(\bar\eta^\alpha).$$ Altogether,
$$F(f_\alpha\restriction A_\delta)=F(\bar\eta^\alpha)=h_{\bar\eta}(\alpha)=g_\alpha(\delta),$$
contradicting the fact that $\delta\in D_\alpha\cap S$.
\end{why}
This completes the proof.
\end{proof}
\begin{remark} Straight-forward adjustments to the preceding proof establish the following.
Suppose that $\diamondsuit(\vec L,\mu,2^\lambda)$  holds for a given ladder system $\vec L$ over a stationary subset of $\kappa$,
and a given cardinal $\lambda<\kappa$.
For every cardinal $\theta$, if $\onto(\{\lambda\},J,\theta)$ holds for some $\mu$-complete proper ideal $J$ over $\mu$,
then so does $\Phi(\vec L,2^\lambda,\theta)$.
For a list of sufficient conditions for $\onto(\{\lambda\},J,\theta)$ to hold, see the appendix of \cite{paper53}.

On another front, note that the principles $\diamondsuit(\vec L, \mu, \theta)$ and $\Phi(\vec L, \mu, \theta)$ can be strengthened by adding an extra parameter $I$, an ideal on $\kappa$ extending $\ns_\kappa \restriction S$. In each case, the set of good guesses $\delta$ is now required to be a set in $I^+$ instead of merely a stationary subset of $S$. We leave to the interested reader to verify that most of the results in this section hold for these strengthenings for $I$ any $\lambda^+$-complete ideal on $\kappa$ extending $\ns_\kappa \restriction S$. The only change that needs to be made is that the second hypothesis of Lemma~\ref{lemma115} will now require $\diamondsuit^*(\vec L, 2^\lambda, 2^\lambda)$ instead of $\diamondsuit(\vec L, 2^\lambda, 2^\lambda)$,
which is what Lemma~\ref{lemma114} produces anyway.
\end{remark}
We are now in a condition to prove Fact~\ref{sh775}.

\begin{cor}\label{cor28}
Suppose that $\Lambda\le\lambda$ is a pair of uncountable cardinals such that $\Lambda$ is a strong limit.
Denote $\kappa := \cf(2^\lambda)$.
Then, for co-boundedly many regular cardinals $\mu<\Lambda$,
there exists a $\mu$-bounded $C$-sequence $\vec C$
over a stationary $S\s E^\kappa_\mu$ such that $\diamondsuit(\vec C\restriction S', \mu)$ holds for every stationary $S'\s S$.
\end{cor}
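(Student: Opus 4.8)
The plan is to push the wide diamond produced by Lemma~\ref{countinglemma} through the pipeline Lemma~\ref{lemma114} $\to$ Lemma~\ref{lemma115} $\to$ Lemma~\ref{l210}, while crucially exploiting that the guessing in Lemma~\ref{countinglemma} happens on \emph{club}-many points. Concretely, I would first invoke Lemma~\ref{countinglemma} to fix, for each $\mu$ in a co-bounded subset of $\reg(\Lambda)$, a $\mu$-bounded $C$-sequence $\vec C=\langle C_\delta\mid\delta\in S\rangle$ over a stationary $S\s E^\kappa_\mu$ witnessing $\diamondsuit^*(\vec C,2^\lambda,2^\lambda)$. The key observation is that the starred principle is inherited by stationary subsets: if $S'\s S$ is stationary, then the restricted sequence $\langle\mathcal P_\delta\mid\delta\in S'\rangle$ still witnesses $\diamondsuit^*(\vec C\restriction S',2^\lambda,2^\lambda)$, because for each $f:\kappa\rightarrow2^\lambda$ the set of $\delta\in S$ failing to guess is non-stationary, and hence so is its intersection with $S'$. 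It therefore suffices to show, for a fixed such $\mu$, that $\diamondsuit^*(\vec C\restriction S',2^\lambda,2^\lambda)$ implies $\diamondsuit(\vec C\restriction S',\mu)$, as this then applies uniformly to every stationary $S'\s S$ and yields the strengthened conclusion of the corollary.

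To set up this implication I would first pin down the number of colours. Using Fact~\ref{rgch} together with $2^\mu<\Lambda$ (as $\Lambda$ is a strong limit), fix a regular cardinal $\theta$ with $2^\mu\le\theta<\Lambda$ and $\lambda^{[\theta]}=\lambda$. Since $\theta$ is regular, $m(\lambda,\theta)\le\lambda^{[\theta]}=\lambda$; the reverse inequality holds because any subfamily of $[\lambda]^\theta$ of size $<\lambda$ has union of size $<\lambda$ (using $\theta<\lambda$), leaving room for a $b\in[\lambda]^\theta$ disjoint from every member, so $m(\lambda,\theta)=\lambda$. Moreover $2^\theta<\Lambda\le\lambda$. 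Thus the hypotheses of Lemma~\ref{lemma114} are met and yield $\onto(\{\lambda\},[2^\lambda]^{\le\lambda},\theta)$. Now fix any stationary $S'\s S$: from $\diamondsuit^*(\vec C\restriction S',2^\lambda,2^\lambda)$ I obtain $\diamondsuit(\vec C\restriction S',2^\lambda,2^\lambda)$, and since $\lambda<\kappa=\cf(2^\lambda)$ by K\"onig's theorem, Lemma~\ref{lemma115} applied to the ladder system $\vec C\restriction S'$ delivers $\Phi(\vec C\restriction S',2^\lambda,\theta)$.

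The last step deflates the parameters. Invoking the two monotonicity properties of $\Phi$ — the principle weakens both as the base of the guessed functions shrinks and as the number of colours shrinks — I would pass from $\Phi(\vec C\restriction S',2^\lambda,\theta)$ to $\Phi(\vec C\restriction S',\mu,2^\mu)$, which is legitimate since $\mu\le2^\lambda$ and $2^\mu\le\theta$. As $\vec C\restriction S'$ is $\mu$-bounded and $\mu^{|\mu|}=\mu^\mu=2^\mu$, Lemma~\ref{l210} then converts this into $\diamondsuit(\vec C\restriction S',\mu)$, completing the argument.

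I expect the two genuinely delicate points to be the following. The first is the recognition that Lemma~\ref{countinglemma} must be used in its \emph{starred} form: only because its guessing set is co-non-stationary does the conclusion survive passage to an arbitrary stationary $S'\s S$, which is precisely the strengthening the corollary demands over a bare $\diamondsuit(\vec C,\mu)$. The second is the careful bookkeeping of the three numerical parameters across Lemmas~\ref{lemma114}, \ref{lemma115} and \ref{l210}: Lemma~\ref{lemma115} is forced to output $2^\lambda$ as the base and whatever colour count $\theta$ the $\onto$-principle supplies, whereas Lemma~\ref{l210} requires base $\mu$ and \emph{exactly} $2^\mu$ colours. Reconciling these is what compels the detour of first inflating the colour count to a regular $\theta\ge2^\mu$ so that Fact~\ref{rgch} becomes applicable, and only afterwards collapsing it back down via monotonicity. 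Everything else reduces to routine verifications that $\lambda<\kappa$ and that the relevant cardinals lie below the strong limit $\Lambda$.
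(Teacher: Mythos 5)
Your proposal is correct and follows essentially the same route as the paper's proof: Lemma~\ref{countinglemma} gives $\diamondsuit^*(\vec C,2^\lambda,2^\lambda)$, which restricts to any stationary $S'\s S$; Fact~\ref{rgch} supplies a regular $\theta\in[2^\mu,\Lambda)$ with $m(\lambda,\theta)=\lambda$ feeding Lemma~\ref{lemma114}; then Lemma~\ref{lemma115} yields $\Phi(\vec C\restriction S',2^\lambda,\theta)$, which by monotonicity gives $\Phi(\vec C\restriction S',\mu,2^\mu)$, and Lemma~\ref{l210} converts this to $\diamondsuit(\vec C\restriction S',\mu)$. The only differences are that you spell out details the paper leaves implicit (the lower bound $m(\lambda,\theta)\ge\lambda$ and the inheritance of the starred principle by stationary subsets), all of which are accurate.
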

\begin{proof}  By Lemma~\ref{countinglemma}, for co-boundedly many $\mu \in \reg(\Lambda)$, there is a $\mu$-bounded $C$-sequence $\vec C$ over some stationary $S\s E^\kappa_\mu$ such that $\diamondsuit^*(\vec C, 2^\lambda, 2^\lambda)$ holds.
In particular, $\diamondsuit(\vec C\restriction S', 2^\lambda, 2^\lambda)$ holds for any given stationary $S'\s S$.
By Fact~\ref{rgch}, we may fix $\theta\in\reg(\Lambda)$ above $2^\mu$ such that $\lambda^{[\theta]}=\lambda$.
As $\Lambda$ is a strong limit, $2^\theta<\Lambda\le\lambda$.
Thus, by Lemma~\ref{lemma114}, $\onto(\{\lambda\},[2^\lambda]^{\le\lambda},\theta)$ holds.
Now, let $S'\s S$ be stationary. By Lemma~\ref{lemma115}, $\Phi(\vec C\restriction S',2^\lambda,\theta)$ holds.
In particular, $\Phi(\vec C\restriction S',\mu,2^\mu)$ holds.
Then, by Lemma~\ref{l210}, $\diamondsuit(\vec C\restriction S', \mu)$ holds.
\end{proof}

Before concluding this section, we would like to briefly describe an additional configuration which provide narrow diamonds over a ladder system. Above, we used Shelah's beautiful revised GCH theorem, Fact~\ref{rgch}, to obtain instances of cardinals $\theta \leq \lambda$ such that $m(\lambda, \theta) = \lambda$. The following folklore fact provides other instances (see the proof of \cite[Lemma~5.12]{MR1086455}).
\begin{fact}\label{meetingfact}
For all infinite cardinals $\theta\le\lambda < \theta^{+\cf(\theta)}$,
$m(\lambda, \theta) = \lambda$ holds.
\end{fact}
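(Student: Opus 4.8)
The plan is to prove the two inequalities separately, and to note at the outset that the substance lies entirely in the upper bound. For the lower bound, assume $\theta<\lambda$ and let $\mathcal A\s[\lambda]^\theta$ have size $<\lambda$. Then $|\bigcup\mathcal A|\le\max(|\mathcal A|,\theta)<\lambda$, so I can choose $b\in[\lambda]^\theta$ disjoint from $\bigcup\mathcal A$, whence $|a\cap b|=0$ for every $a\in\mathcal A$; thus no family of size $<\lambda$ witnesses the defining property of $m(\lambda,\theta)$, giving $m(\lambda,\theta)\ge\lambda$. (In the degenerate case $\lambda=\theta$ one has $m(\theta,\theta)=1$, witnessed by $\{\theta\}$, so the asserted \emph{equality} should be read with $\theta<\lambda$; the upper bound proved below holds throughout the stated range.)

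The core is the bound $m(\lambda,\theta)\le\lambda$, which I would establish by induction on $\lambda$ ranging over $[\theta,\theta^{+\cf(\theta)})$, equivalently by induction on the ordinal $\xi<\cf(\theta)$ for which $\lambda=\theta^{+\xi}$. The base case $\xi=0$ is immediate, since $\mathcal A:=\{\theta\}$ works. The common engine of the two remaining cases is a single reduction: given $b\in[\lambda]^\theta$, I locate a subset $b'\s b$ with $|b'|=\theta$ that is contained in a set of cardinality $<\lambda$, and then transport, along a bijection, an inductively-supplied witness living on a smaller cardinal. Concretely, in the successor case $\lambda=\theta^{+(\xi+1)}$ (so $\lambda$ is a regular cardinal $>\theta$), every $b\in[\lambda]^\theta$ is bounded, hence contained in some $\gamma<\lambda$ with $\theta\le|\gamma|<\lambda$; fixing for each such $\gamma$ a bijection $e_\gamma\colon|\gamma|\to\gamma$ and setting $\mathcal A_\gamma:=\{e_\gamma[a]\mid a\in\mathcal A_{|\gamma|}\}$ for an inductive witness $\mathcal A_{|\gamma|}$ on the cardinal $|\gamma|<\lambda$, the union $\mathcal A:=\bigcup_{\gamma}\mathcal A_\gamma$ is a union of $\lambda$ sets each of size $<\lambda$, hence of size $\le\lambda$, and it meets every $b$ by applying the inductive property of $\mathcal A_{|\gamma|}$ to $e_\gamma^{-1}[b]$.

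The main obstacle is the limit case $\lambda=\theta^{+\xi}$ with $\xi$ a limit ordinal, since then $\lambda$ is singular and a $\theta$-sized $b$ may be \emph{unbounded} in $\lambda$, defeating the bounded-support reduction. The key is an arithmetic observation: $\cf(\lambda)=\cf(\xi)\le|\xi|<\cf(\theta)$. Hence, fixing an increasing cofinal sequence $\langle\delta_i\mid i<\cf(\lambda)\rangle$ in $\lambda$ and partitioning $b$ into the $<\cf(\theta)$ many pieces $b\cap[\delta_i,\delta_{i+1})$, a pigeonhole argument forces at least one piece $b'$ to have size $\theta$ --- a set of size $\theta$ cannot be split into fewer than $\cf(\theta)$ pieces each of size $<\theta$. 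This $b'$ lives in an interval of cardinality $|\delta_{i+1}|<\lambda$ (a cardinal of the form $\theta^{+\eta}$ with $\eta<\xi<\cf(\theta)$, so the inductive hypothesis applies), and the same transport-of-a-witness trick applies on each interval; taking the union over the $\cf(\lambda)<\lambda$ intervals keeps the total size $\le\cf(\lambda)\cdot\lambda=\lambda$. Combining the three cases completes the induction, yielding $m(\lambda,\theta)\le\lambda$ throughout, and hence $m(\lambda,\theta)=\lambda$ whenever $\theta<\lambda$.
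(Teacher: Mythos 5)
Your proof is correct; note that the paper does not prove this Fact at all---it is quoted as folklore with a pointer to the proof of \cite[Lemma~5.12]{MR1086455}---and your induction on $\xi<\cf(\theta)$ for $\lambda=\theta^{+\xi}$ (bounded-trace reduction at successors, and $\cf(\lambda)=\cf(\xi)<\cf(\theta)$ plus pigeonhole at limits) is exactly the standard argument behind that reference. Your caveat that the stated equality degenerates at $\lambda=\theta$ (where $m(\theta,\theta)=1$) is also correct and harmless, since the paper only ever uses the upper bound $m(\varkappa,\theta)\le\varkappa$.
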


Using Fact~\ref{meetingfact} we can trace through the proofs of this section to obtain the following theorem. The reader may first consider Corollary~\ref{cor212} below
which deals with the simplest case of the theorem, where $\mu:= \aleph_0$, in which case $\mu^{+\mu}= \aleph_\omega$.
\begin{thm}\label{thm211}
Suppose that $\mu$ is an infinite regular cardinal, and $\lambda$ is a cardinal such that
$\mu<2^\mu<2^{2^\mu}\le\lambda<2^\lambda<\mu^{+\mu}$.
Then $\kappa:= 2^\lambda$ is a regular cardinal admitting a $\mu$-bounded $C$-sequence $\vec C$
over a stationary $S\s E^\kappa_\mu$ such that $\diamondsuit(\vec C\restriction S', \mu)$ holds for every stationary $S'\s S$.
\end{thm}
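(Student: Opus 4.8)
The plan is to retrace the proof of Corollary~\ref{cor28}, which assembles Lemmas~\ref{countinglemma}, \ref{lemma114}, \ref{lemma115} and~\ref{l210}, while substituting Fact~\ref{meetingfact} for each invocation of the revised GCH (Fact~\ref{rgch}). The arithmetic window $2^\lambda<\mu^{+\mu}$ is precisely what makes the hypothesis $\theta\le\lambda<\theta^{+\cf(\theta)}$ of Fact~\ref{meetingfact} available for the values of $\theta$ we need, so the substitution goes through without any strong-limit assumption.

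First I would check that $\kappa:=2^\lambda$ is regular, so that $\cf(2^\lambda)=2^\lambda=\kappa$ and the lemmas of this section (stated for $\kappa=\cf(2^\lambda)$) apply verbatim. Since $\mu<2^\lambda<\mu^{+\mu}$ and $\mu$ is a cardinal (hence a limit ordinal, so $\mu^{+\mu}=\sup_{\alpha<\mu}\mu^{+\alpha}$), we have $2^\lambda=\mu^{+\alpha}$ for a unique $0<\alpha<\mu$. Were $\alpha$ a limit ordinal we would get $\cf(2^\lambda)=\cf(\alpha)\le\alpha<\mu\le\lambda$, contradicting König's theorem, which gives $\cf(2^\lambda)>\lambda$; hence $\alpha$ is a successor ordinal and $2^\lambda$ is a successor cardinal, thus regular.

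Next I would feed the arithmetic into the meeting numbers. For the conclusion of Lemma~\ref{countinglemma} I only need (as its proof explicitly records) that $\mu$ is regular, $\mu<\lambda$, and $m(\varkappa,\mu)<2^\lambda$ for every $\varkappa<2^\lambda$: the first two are immediate from $\mu<2^\mu<2^{2^\mu}\le\lambda$, and for the third, any $\varkappa$ with $\mu\le\varkappa<2^\lambda<\mu^{+\mu}=\mu^{+\cf(\mu)}$ satisfies the hypothesis of Fact~\ref{meetingfact}, yielding $m(\varkappa,\mu)=\varkappa<2^\lambda$. This produces a $\mu$-bounded $C$-sequence $\vec C$ over a stationary $S\s E^\kappa_\mu$ with $\diamondsuit^*(\vec C,2^\lambda,2^\lambda)$, hence $\diamondsuit(\vec C\restriction S',2^\lambda,2^\lambda)$ for every stationary $S'\s S$. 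For Lemma~\ref{lemma114} I would take $\theta:=2^\mu$: then $2^\theta=2^{2^\mu}\le\lambda$, and since $\cf(2^\mu)>\mu$ by König we get $(2^\mu)^{+\cf(2^\mu)}\ge\mu^{+\cf(2^\mu)}\ge\mu^{+\mu}>\lambda$, so Fact~\ref{meetingfact} gives $m(\lambda,2^\mu)=\lambda$. Thus $2^\theta\le m(\lambda,\theta)=\lambda$, and Lemma~\ref{lemma114} yields $\onto(\{\lambda\},[2^\lambda]^{\le\lambda},2^\mu)$.

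With these two inputs in place, the rest is automatic: for any stationary $S'\s S$, Lemma~\ref{lemma115} (using $\lambda<\kappa$) gives $\Phi(\vec C\restriction S',2^\lambda,2^\mu)$, whence $\Phi(\vec C\restriction S',\mu,2^\mu)$ by monotonicity, and since $\vec C\restriction S'$ is $\mu$-bounded with $\mu^\mu=2^\mu$, Lemma~\ref{l210} delivers $\diamondsuit(\vec C\restriction S',\mu)$. I expect the only genuine work to be the two verifications above — the regularity of $2^\lambda$ and, crucially, that the single choice $\theta=2^\mu$ simultaneously meets $2^{2^\mu}\le\lambda$ and keeps $\lambda$ inside the interval $[2^\mu,(2^\mu)^{+\cf(2^\mu)})$ on which Fact~\ref{meetingfact} applies; everything downstream is a mechanical transcription of Corollary~\ref{cor28}.
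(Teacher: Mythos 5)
Your proposal is correct and follows essentially the same route as the paper's own proof: establish regularity of $2^\lambda$ via König, feed the window $2^\lambda<\mu^{+\mu}$ into Fact~\ref{meetingfact} to get the hypothesis extracted after Claim~\ref{c261} (yielding $\diamondsuit^*(\vec C,2^\lambda,2^\lambda)$), take $\theta:=2^\mu$ in Lemma~\ref{lemma114}, and finish with Lemmas~\ref{lemma115} and~\ref{l210}. Your verification that $\lambda<(2^\mu)^{+\cf(2^\mu)}$ is in fact slightly more careful than the paper's, which only records $\lambda<\theta^{+\theta}$.
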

\begin{proof}  Let $\alpha<\mu$ be such that $\kappa=\mu^{+\alpha}$. If $\kappa$ were to be singular,
then $\cf(2^\lambda)=\cf(\alpha)<\mu<\lambda$, contradicting Konig's lemma.
So $\kappa$ is regular.
Next, as $2^\lambda<\mu^{+\mu}$, for every cardinal $\varkappa\in[\mu,2^\lambda)$, it is the case that $\mu\le\varkappa<\mu^{+\mu}$,
and so Fact~\ref{meetingfact} implies that $m(\varkappa, \mu) = \varkappa<2^\lambda$.
As made clear right after Claim~\ref{c261},
we then get a $\mu$-bounded $C$-sequence $\vec C$ over some stationary $S\s E^\kappa_\mu$ such that $\diamondsuit^*(\vec C, 2^\lambda, 2^\lambda)$ holds.
In particular, $\diamondsuit(\vec C\restriction S', 2^\lambda, 2^\lambda)$ holds for any given stationary $S'\s S$.
Now let $\theta:= 2^\mu$.
As $\mu<\theta<\lambda<\mu^{+\mu}$, it is the case that $\theta<\lambda<\theta^{+\theta}$,
so Fact~\ref{meetingfact} implies that $m(\lambda,\theta)=\lambda$.
Then, by Lemma~\ref{lemma114}, $\onto(\{\lambda\},[2^\lambda]^{\le\lambda},\theta)$ holds.
Then, by Lemma~\ref{lemma115}, $\Phi(\vec C\restriction S',2^\lambda,\theta)$ holds.
In particular, $\Phi(\vec C,\mu,2^\mu)$ holds.
Then, by Lemma~\ref{l210}, $\diamondsuit( \vec C\restriction S', \mu)$ holds.
\end{proof}

\begin{cor}\label{cor212}
For every $n\in[3,\omega)$ such that $\beth_n<\aleph_\omega$,
there is an $\omega$-bounded $C$-sequence $\vec C$ over $E^{\beth_n}_\omega$ such that $\diamondsuit(\vec C, \omega)$ holds.
In particular, if $\kappa:=2^{2^{2^{\aleph_0}}}$ is smaller than $\aleph_\omega$,
then there is an $\omega$-bounded $C$-sequence $\vec C$ over $E^\kappa_\omega$ such that $\diamondsuit(\vec C, \omega)$ holds.
\qed
\end{cor}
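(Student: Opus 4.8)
The plan is to read the statement off Theorem~\ref{thm211} by instantiating $\mu:=\aleph_0$. With this choice $2^\mu=\beth_1$ and $2^{2^\mu}=\beth_2$, while $\mu^{+\mu}=\aleph_0^{+\aleph_0}=\aleph_\omega$, so the entire chain of hypotheses of Theorem~\ref{thm211} is expressible in the $\beth$-hierarchy. Given $n\in[3,\omega)$ with $\beth_n<\aleph_\omega$, I would set $\lambda:=\beth_{n-1}$, so that $2^\lambda=\beth_n$. It then remains only to verify the required inequalities $\mu<2^\mu<2^{2^\mu}\le\lambda<2^\lambda<\mu^{+\mu}$, which here read $\aleph_0<\beth_1<\beth_2\le\beth_{n-1}<\beth_n<\aleph_\omega$. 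All but two of these are immediate from Cantor's theorem; the step $\beth_2\le\beth_{n-1}$ holds precisely because $n\ge3$, and $\beth_n<\aleph_\omega$ is the hypothesis of the corollary.

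Theorem~\ref{thm211} then yields that $\kappa:=2^\lambda=\beth_n$ is regular and carries an $\omega$-bounded $C$-sequence $\vec C$ over some stationary $S\s E^{\beth_n}_\omega$ with $\diamondsuit(\vec C\restriction S',\omega)$ for every stationary $S'\s S$; taking $S'=S$ gives $\diamondsuit(\vec C,\omega)$ with $\vec C$ indexed by $S$. The only discrepancy with the assertion of the corollary is that the latter asks for a $C$-sequence over all of $E^{\beth_n}_\omega$, not merely over a stationary subset. I would close this gap by extending $\vec C$: for each $\delta\in E^{\beth_n}_\omega\setminus S$ pick an arbitrary increasing cofinal $\omega$-sequence $C_\delta$ in $\delta$ (which is automatically closed in $\delta$ and of order type $\omega$, so the extended $\vec C$ remains an $\omega$-bounded $C$-sequence), and extend the diamond sequence by letting $f_\delta$ be any function from $C_\delta$ to $\omega$. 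Since, for every $f:\beth_n\to\omega$, the guessing already succeeds on a stationary subset of $S$, it succeeds, a fortiori, on a stationary subset of $E^{\beth_n}_\omega$, so $\diamondsuit(\vec C,\omega)$ holds over $E^{\beth_n}_\omega$.

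I do not expect a genuine obstacle here: the argument is just bookkeeping in the $\beth$-hierarchy together with a trivial extension of the ladder system off $S$, which is exactly why the statement is recorded with its proof omitted. The final in-particular clause is simply the instance $n=3$, since $2^{2^{2^{\aleph_0}}}=2^{2^{\beth_1}}=2^{\beth_2}=\beth_3$.
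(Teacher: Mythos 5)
Your proposal is correct and is exactly the intended argument: the paper records this corollary with its proof omitted precisely because it is the instance $\mu:=\aleph_0$, $\lambda:=\beth_{n-1}$ of Theorem~\ref{thm211}, as the text preceding that theorem indicates. Your verification of the arithmetic hypotheses, the identification $2^{2^{2^{\aleph_0}}}=\beth_3$, and the routine extension of the $C$-sequence from the stationary set $S$ to all of $E^{\beth_n}_\omega$ (which preserves stationary guessing) are all as the authors intend.
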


\begin{remark}\label{steppingupremark}
With a bit more work, one can show that if $\aleph_\omega$ is a strong limit, then for every uncountable cardinal $\mu<\aleph_\omega$,
there is a finite set $\Theta\s\aleph_\omega$ such that for every cardinal $\lambda$ with $\cf(\lambda)\notin\Theta$,
there exists a $\mu$-bounded $C$-sequence $\vec C$ over $E^{\lambda^+}_\mu$
such that $\diamondsuit(\vec C, \mu)$ holds.
This is obtained by developing stepping up methods which allow for transferring diamonds on ladder systems from smaller cardinals to larger cardinals.
\end{remark}
Having discussed the methods from the preceding remark and its limitations with Jing Zhang, the following question emerged:
\begin{q}[Zhang] \label{jingquestion}Suppose that $\aleph_\omega$ is a strong limit and $\square_{\aleph_\omega}$ holds.
Does there exist a cardinal $\mu<\aleph_\omega$,
and a $\mu$-bounded ladder system $\vec L$ over a \emph{nonreflecting} stationary subset of $E^{\aleph_{\omega+1}}_\mu$
such that $\diamondsuit(\vec L, \mu)$ holds?
\end{q}

\section{Club guessing with diamonds}\label{sec3}
The main result of this section concerns the following $n$-dimensional version of Definition~\ref{defdio}.
The main technical result, Theorem~\ref{thm32}, additionally incorporates club guessing into the ladder system on which diamond holds.

\begin{defn}\label{defdioN} For a ladder system $\vec L=\langle A_\delta\mid\delta\in S\rangle$ over some stationary $S\s\kappa$
a cardinal $\theta$ and a positive integer $n$,
$\diamondsuit^{n}(\vec L,\theta)$ asserts the existence of a sequence $\langle f_\delta\mid\delta\in S\rangle$ such that:
\begin{itemize}
\item for every $\delta\in S$, $f_\delta$ is a function from $[A_\delta]^n$ to $\theta$;
\item for every function $f:[\kappa]^n\rightarrow\theta$,
there are stationarily many $\delta\in S$ such that $f\restriction [A_\delta]^n=f_\delta$.
\end{itemize}
\end{defn}
\begin{remark}
We may also consider the variation of $\diamondsuit^{n}(\vec L,\theta)$ where we require above that for $\delta \in S$, $f_\delta:{}^{n}A_\delta \rightarrow \theta$ and these serve to guess a function $f:{}^n\kappa \rightarrow \theta$. In case $\theta$ is infinite however the two are easily seen to be equivalent.
\end{remark}

The main corollary to the results of this section is the following, which proves Theorem~\ref{thmd}.

\begin{cor} \label{cor33} Suppose that $\aleph_\omega$ is a strong limit.
For every positive integer $n$,
for all infinite cardinals $\mu\le\theta<\aleph_\omega$,
there are a cardinal $\kappa<\aleph_\omega$,
and a $\mu$-bounded ladder system $\vec L$ over $E^\kappa_\mu$ such that $\diamondsuit^n(\vec L,\theta)$ holds
and is moreover witnessed by a sequence $\langle f_\delta\mid\delta\in S\rangle$ consisting of constant maps.
\end{cor}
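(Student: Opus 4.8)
I read the conclusion through Definition~\ref{defdioN}: it asks for a cardinal $\kappa<\aleph_\omega$, a $\mu$-bounded ladder system $\vec L=\langle A_\delta\mid\delta\in E^\kappa_\mu\rangle$ and a map $g:\kappa\to\theta$ so that every $f:[\kappa]^n\to\theta$ is constantly $g(\delta)$ on $[A_\delta]^n$ for stationarily many $\delta$; here $\mu$ is necessarily regular, as otherwise $E^\kappa_\mu=\emptyset$. The plan is to run the one-dimensional pipeline of Subsection~\ref{subsec21} in $n$ dimensions so as to obtain an \emph{exact} guess of $f\restriction[A_\delta]^n$, and then to superimpose a partition-calculus step turning exact guessing into \emph{constant} guessing.

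First I fix the arithmetic. Since $\aleph_\omega$ is a strong limit, every finitely iterated exponential of a cardinal below $\aleph_\omega$ is again below $\aleph_\omega$; in particular I may fix a cardinal $\rho<\aleph_\omega$ with $\cf(\rho)=\mu$ that is partitioned into $\mu$ consecutive blocks each of order type above the Erd\H{o}s--Rado threshold $\beth_{n-1}(\theta^{<\mu})^+$, and then pick $\lambda<\aleph_\omega$ above $\max\{2^{2^\mu},\theta,\rho\}$. As $\mu^{+\mu}\ge\aleph_\omega>2^\lambda$ for every infinite $\mu$, the chain $\mu<2^\mu<2^{2^\mu}\le\lambda<2^\lambda<\mu^{+\mu}$ holds, so the construction underlying Theorem~\ref{thm211} applies: $\kappa:=2^\lambda$ is regular and below $\aleph_\omega$, and Lemma~\ref{countinglemma} together with Fact~\ref{meetingfact} furnishes a $C$-sequence over a stationary $S\s E^\kappa_\mu$ carrying the \emph{wide} diamond $\diamondsuit^*(\vec D,2^\lambda,2^\lambda)$. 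I would take this $\vec D=\langle D_\delta\mid\delta\in S\rangle$ to be of order type $\rho$ rather than $\mu$; the point is that, as $\rho<\aleph_\omega\le\rho^{+\cf(\rho)}$, Fact~\ref{meetingfact} still yields $m(\varkappa,|\rho|)<2^\lambda$ for all $\varkappa<2^\lambda$, so the width estimate of Claim~\ref{treeclaim} survives verbatim.

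Next I would obtain an exact $n$-dimensional guess on $\vec D$. Coding $f:[\kappa]^n\to\theta$ into the one-dimensional map $\gamma\mapsto f\restriction[D_\gamma]^n$ (legitimately, since $\theta^{|\rho|}\le 2^\lambda$) and reassembling $f\restriction[D_\delta]^n$ from its restrictions to $[D_\delta\cap\beta]^n$, $\beta\in\nacc(D_\delta)$, through the weak coherence of $\vec D$ exactly as in Lemma~\ref{countinglemma}, turns $\diamondsuit^*(\vec D,2^\lambda,2^\lambda)$ into a wide $n$-dimensional diamond guessing $f\restriction[D_\delta]^n$ among $<2^\lambda$ options. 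Running the $n$-dimensional, $\theta$-coloured analogues of the $\onto/\Phi$ reduction (Lemmas~\ref{lemma114},~\ref{lemma115} and~\ref{l210})---whose transcription to dimension $n$ requires, as the section's preamble warns, no new ideas---narrows the width to $1$, producing an exact $\diamondsuit^n(\vec D,\theta)$: a sequence $\langle c_\delta:[D_\delta]^n\to\theta\rangle$ with $f\restriction[D_\delta]^n=c_\delta$ for stationarily many $\delta$.

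It remains to pass from exact to constant, and this is the crux. For each $\delta\in S$ I would, \emph{in advance}, use the Erd\H{o}s--Rado theorem to extract from $c_\delta$ a set $A_\delta\s D_\delta$ that is cofinal in $\delta$, of order type $\mu$, and monochromatic for $c_\delta$; letting $g(\delta)$ be that colour and extending $\vec L:=\langle A_\delta\rangle$ arbitrarily over $E^\kappa_\mu\setminus S$, any $f$ is then constantly $g(\delta)$ on $[A_\delta]^n$ at each of the stationarily many $\delta$ where $f\restriction[D_\delta]^n=c_\delta$. The obstacle is exactly this extraction: because $\mu\not\to(\mu)^n_\theta$, a colouring of $[D_\delta]^n$ need not be constant on any cofinal subset of order type $\mu$ if $D_\delta$ itself has order type $\mu$---which is why $D_\delta$ was fattened to order type $\rho$ and cut into $\mu$ Erd\H{o}s--Rado-large cofinal blocks. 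One builds $A_\delta$ recursively, selecting one point from cofinally many blocks while preserving end-homogeneity; since over an initial piece of size $<\mu$ there are at most $\theta^{<\mu}$ end-types, and each block has order type exceeding $\theta^{<\mu}$, a point of the prescribed type is always available arbitrarily high. Securing this block decomposition cofinally in $\delta$, compatibly with the guessing, is precisely the club-guessing strengthening recorded in Theorem~\ref{thm32}; the strong-limit hypothesis enters only to keep the thresholds $\beth_{n-1}(\theta^{<\mu})^+$, $\theta^{|\rho|}$ and $2^\lambda$ below $\aleph_\omega$, so that $\kappa$ lands there.
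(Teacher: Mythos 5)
The paper's own proof of this corollary is a one-line appeal to Theorem~\ref{thm32} with $\Omega:=\omega$ (and $T:=\kappa$): since $\aleph_\omega$ is a strong limit, $\kappa:=\beth_{n+2}(\theta)<\aleph_\omega=\Omega^{+\omega}$, and that theorem already delivers constant-valued local guesses. Your proposal instead tries to rebuild the mechanism, and it breaks at exactly the step you call the crux. You propose to fix, \emph{in advance}, for each $\delta$ and each guessed $c_\delta:[D_\delta]^n\rightarrow\theta$, a cofinal monochromatic $A_\delta\s D_\delta$ of order type $\mu$, where $D_\delta$ has order type $\rho$ and is cut into $\mu$ blocks of Erd\H{o}s--Rado size. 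This extraction is impossible in general: since the number of blocks is $\mu\le\theta$, the colouring that assigns to a point (for $n=1$; to the least coordinate of a tuple, for $n>1$) the index of its block is a legitimate $\theta$-colouring in which every colour class is bounded in $\delta$, so it admits no cofinal monochromatic subset at all. Fattening the blocks only guarantees monochromatic sets of order type $\mu$ \emph{inside a single block}, hence bounded in $\delta$. The end-homogeneous recursion does not repair this: even after securing end-homogeneity along a cofinal set of order type $\mu$, the final step is a partition of a set of cofinality $\mu$ into $\theta^{<\mu}\ge\mu$ classes (end-types, and then colours), none of which need be cofinal; "a point of the prescribed type arbitrarily high" is exactly what pigeonhole fails to provide when types outnumber the cofinality. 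This failure mode is flagged explicitly in the paper's unnumbered remark following Remark~\ref{remark38}: an exact $\diamondsuit^n$-sequence can be arranged so that the guessed restriction is a \emph{bijection}, so exact guessing followed by local Ramsey extraction cannot yield constant guessing.

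Theorem~\ref{thm32} circumvents this by never extracting from the candidate $c_\delta$. There, the Erd\H{o}s--Rado theorem is applied to the \emph{true} adversary colouring $f_\alpha$ along $\nacc(C_\rho\setminus\epsilon)$, a set of order type $\chi=(\beth_{n-1}(\theta))^+$ attached to a point $\rho$ of cofinality $\chi$, where $\chi\rightarrow(\mu)^n_\theta$ genuinely holds; the point $\delta:=\sup(H_\alpha)$ at which the guess occurs is determined a posteriori by where the monochromatic set lands, and what the diamond machinery at $\delta$ guesses is not $f\restriction[C_\delta]^n$ but an index $i<\sigma$ into a pre-enumerated list of pairs $(\tau,B)$ with $B$ a cofinal subset of $\nacc(C_\delta)$ of order type $\mu$; an $I$-positivity argument then finds a single $\delta$ serving an $I^+$-set of adversaries simultaneously. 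Your closing citation of Theorem~\ref{thm32} as merely supplying the block decomposition inverts the logic: that theorem is the entire result, and its proof does not pass through the in-advance extraction your argument requires. (A secondary issue: Lemma~\ref{countinglemma} produces, via $I[\kappa]$, coherent ladders of order type $\mu$ at points of $E^\kappa_\mu$, so your order-type-$\rho$ modification of it is also unjustified as stated; the paper obtains its long coherent ladders from a separate club-guessing construction, Claim~\ref{claim351}.)
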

\begin{proof}
By Theorem~\ref{thm32} below, taking $\Omega:=\omega$.
\end{proof}

Towards the proof of Theorem~\ref{thm32}, we shall need the following strong variation of Lemma~\ref{lemma114}.
\begin{lemma}\label{ontothird} Suppose that $\theta,\lambda$ are infinite cardinals such that $\lambda^\theta=\lambda$.
Then there is a proper $\theta^+$-complete ideal $I$ over $\lambda$ such that $\onto(I^+,[2^\lambda]^{<\theta},\theta)$ holds.
That is, there is a colouring $c:\lambda\times 2^\lambda\rightarrow\theta$
satisfying that for all $A\in I^+$ and $B\in[2^\lambda]^\theta$,
there exists an $\alpha\in A$ such that $c[\{\alpha\}\times B]=\theta$.
\end{lemma}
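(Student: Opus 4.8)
The goal is to produce a colouring $c:\lambda\times2^\lambda\rightarrow\theta$ whose ``onto'' property is now indexed by two ideals: the target set $A$ ranges over the $I$-positive sets for a suitable $\theta^+$-complete ideal $I$ on $\lambda$, while the witness set $B$ is only required to have size $\theta$ (so the controlling ideal on the $2^\lambda$-side is $[2^\lambda]^{<\theta}$ rather than $[2^\lambda]^{\le\lambda}$). Compared to Lemma~\ref{lemma114}, the hypothesis has been strengthened from $2^\theta\le m(\lambda,\theta)=\lambda$ to the cleaner $\lambda^\theta=\lambda$, and this is exactly what lets us both build the ideal $I$ and run a diagonalisation over the smaller sets $B\in[2^\lambda]^\theta$.

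The plan is to first fix the ideal. Using $\lambda^\theta=\lambda$, I would take $I$ to be the $\theta^+$-complete (normal-type) ideal on $\lambda$ generated in a standard way so that $I$-positive sets are large enough to diagonalise against any family of $\theta$-sized ``guesses'' while $\lambda^\theta=\lambda$ keeps the number of relevant approximating objects at exactly $\lambda$; the bound control mirrors the role played by $\mathcal A$ witnessing $m(\lambda,\theta)=\lambda$ in Lemma~\ref{lemma114}, except that here $\lambda^\theta=\lambda$ makes the whole space $[\lambda]^{<\theta}$ (or $^{<\theta}\lambda$) available directly, of size $\lambda$. The key structural step is an analogue of Claim~\ref{invclaim}: for every $\mathcal H\s{}^\lambda 2$ of size $\theta^+$ (or more precisely, for every family indexed by $B\in[2^\lambda]^\theta$ of the corresponding $h_\beta\in{}^\lambda2$), I need an index set---now an $I$-positive set of coordinates $\alpha<\lambda$ rather than a single member of $\mathcal A$---on which the restrictions $\{h_\beta\restriction(\cdot)\}$ separate $\theta$-many of the functions. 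The $\Delta$-function argument of Feldman should adapt: in the first case one finds $I$-many coordinates of disagreement, and in the second (using $2^{<\theta}\le\lambda$, which follows from $\lambda^\theta=\lambda$) one derives a contradiction exactly as before.

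Concretely, I would enumerate $^\lambda2$ as $\langle h_\beta\mid\beta<2^\lambda\rangle$ and, for each ``support function'' $g$ drawn from a family $\mathcal G$ of size $\lambda$ (the functions $^{(\cdot)}\theta\to\theta$ with support of size $\le\theta$, lifted to $^\lambda\theta\to\theta$, as in Lemma~\ref{lemma114}), attach a coordinate $\alpha<\lambda$ via an injective enumeration $\langle g_\alpha\mid\alpha<\lambda\rangle$; then set $c(\alpha,\beta):=g_\alpha(h_\beta)$. Given $A\in I^+$ and $B\in[2^\lambda]^\theta$, the separation step produces an $I$-positive subset $A'\s A$ of coordinates and a subset $B'\s B$ of size $\theta$ on which $\beta\mapsto h_\beta\restriction A'$ is injective; I would then define a single $g\in\mathcal G$ reading off $\otp(B'\cap\beta)$, locate its coordinate $\alpha\in A'\s A$, and conclude $c[\{\alpha\}\times B']=\theta$.

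The main obstacle is the combinatorial separation lemma, i.e.\ pinning down the right definition of $I$ so that the two cases of the Feldman-style dichotomy both land inside $I^+$ and so that $I$ is genuinely $\theta^+$-complete and proper. In particular, one must ensure that the ``disagreement coordinate'' set obtained in the first case is $I$-positive (not merely nonempty) and that the second case---where one would use $2^{<\theta}\le\lambda$ to collide two functions and contradict minimality of the split point---still closes when the witness family has size only $\theta$ rather than $\lambda^+$. Getting the completeness of $I$ to interact correctly with the $\theta$-sized $B$ (so that $[2^\lambda]^{<\theta}$ is the correct ideal on the right) is where the bookkeeping, and the precise choice of $I$, will have to be done carefully; everything downstream is a transcription of the proof of Lemma~\ref{lemma114}.
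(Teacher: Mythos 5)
Your proposal is a plan rather than a proof, and the plan leaves unresolved exactly the part that constitutes the new content of this lemma: the construction of the ideal $I$. You say you would take $I$ to be ``the $\theta^+$-complete ideal generated in a standard way so that $I$-positive sets are large enough to diagonalise,'' but this is circular --- the statement to be proved is precisely that such a proper $\theta^+$-complete ideal exists, and properness is the crux. Concretely, if you define $c(\alpha,\beta):=g_\alpha(h_\beta)$ and let $I$ be generated by the sets $\{\alpha<\lambda\mid c[\{\alpha\}\times B]\neq\theta\}$ for $B\in[2^\lambda]^\theta$ (the only reasonable choice), then $\theta^+$-completeness is free but properness requires: for every $\theta$-sized family $\langle B_i\mid i<\theta\rangle$ of $\theta$-sized sets there is a \emph{single} $\alpha$ with $c[\{\alpha\}\times B_i]=\theta$ for all $i$ simultaneously. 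Nothing in your outline addresses this simultaneity, and it does not follow from handling one $B$ at a time. A secondary issue: the Feldman dichotomy of Claim~\ref{invclaim} does not transfer as you describe, because its second case needs the family to have size exceeding $2^{<\theta}$, whereas here $|B|=\theta$; (for $\theta$-sized families a far simpler separation works --- under $\lambda^\theta=\lambda$ one can take $\mathcal A=[\lambda]^{\le\theta}$ and use the set of pairwise splitting coordinates --- but you did not observe this, and it still would not give you properness of $I$.)

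For comparison, the paper's proof avoids the whole separation-plus-small-support-functions machinery. From $\lambda^\theta=\lambda$ it invokes the Engelking--Kar{\l}owicz theorem to get functions $f_\alpha:2^\lambda\rightarrow\lambda$, $\alpha<\lambda$, such that every partial function $g:x\rightarrow\lambda$ with $|x|=\theta$ extends to some $f_\alpha$. Given $\theta$ many sets $B_i\in[2^\lambda]^\theta$, one shrinks them to pairwise disjoint $B_i^*$ and prescribes a single partial function on $\bigcup_i B_i^*$ (of size $\theta$) forcing $f_\alpha[B_i]=\theta$ for all $i$ at once; this yields properness of the ideal $I:=\{A\s\lambda\mid\exists\mathcal B\in[[2^\lambda]^\theta]^\theta\ \forall\alpha\in A\ \exists B\in\mathcal B\ [f_\alpha[B]\neq\theta]\}$, and the colouring is just $c(\alpha,\beta):=f_\alpha(\beta)$ when this value is below $\theta$. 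You would need to either adopt this route or supply the simultaneous-surjectivity argument for your $g_\alpha$'s; as written, the proposal does not close.
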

\begin{proof} As $\lambda^\theta=\lambda$,
by the Engelking-Karlowicz theorem,  \cite{MR196693},
we may fix a sequence $\vec f=\langle f_\alpha\mid \alpha<\lambda\rangle$
of functions from $2^\lambda$ to $\lambda$
such that for every function $g:x\rightarrow\lambda$ with $x\in[2^\lambda]^\theta$, there exists an $\alpha<\lambda$ such that $g\s f_\alpha$. Let us identify a useful feature of $\vec f$.
\begin{claim}\label{claimidealproper}
For every sequence $\langle B_i \mid i< \theta\rangle$ of elements of $[2^\lambda]^\theta$, for some $\alpha< \lambda$, for every $i< \theta$, $f_\alpha[B_i] = \theta$.
\end{claim}
\begin{why}
Let $\langle B_i \mid i< \theta\rangle$ a sequence of elements of $[2^\lambda]^\theta$ be given. First, let $\langle B^*_i \mid i< \theta\rangle$ be a sequence of pairwise disjoint sets such that for each $i< \theta$, $B^*_i \in [B_i]^\theta$. Then let $x:= \bigcup_{i< \theta}B^*_i$, and let $g:x \rightarrow \lambda$ be such that for each $\beta \in x$, $g(\beta) = \xi$ iff for some $i< \theta$, $\beta \in B^*_i$ and $\otp(B^*_i \cap \beta) = \xi$. Then if $\alpha < \lambda$ is such that $g \s f_\alpha$, then for every $i< \theta$ we have that $f_\alpha[B_i] = \theta$.
\end{why}
Now let $I$ denote the collection of all $A\s\lambda$ for which there exists $\mathcal B\in[[2^\lambda]^\theta]^\theta$
such that, for every $\alpha\in A$, for some $B\in\mathcal B$ it is the case that $f_\alpha[B]\neq\theta$.
It is clear that $I$ is a $\theta^+$-complete ideal over $\lambda$ and it is proper by Claim~\ref{claimidealproper}.
Now pick $c:\lambda\times2^\lambda\rightarrow\theta$ such that $c(\alpha,\beta)=f_\alpha(\beta)$ whenever $f_\alpha(\beta)<\theta$.
Then $c$ and $I$ are as sought.
\end{proof}

The next theorem deals with getting a witness for $\diamondsuit^n(\vec L,\theta)$ from Definition~\ref{defdioN} with several additional features. First,
the local functions $\langle f_\delta \mid \delta \in S\rangle$ witnessing $\diamondsuit^n(\vec L,\theta)$ are the simplest possible: they are constant maps --- see the function $g$ in Clause~(1) below.
Second, the sequence $\langle S_j\mid j<\kappa\rangle$ of Clause~(2) shows that we have $\kappa$-many disjoint stationary sets each of which carries the desired diamond.
This is motivated by results such as \cite[Theorem~A.1]{paper54} that uses a guessing principle with $\mu$ many pairwise disjoint active parts to construct $2^\mu$ many pairwise nonhomeomorphic Dowker spaces.
Unlike the usual diamond and some of its variants (see for example \cite[Theorem~3.7]{paper23} and \cite[Lemma~3.19]{paper40}) that abstractly admit a partition into $\kappa$ many active parts,
here we do not know of such a partition theorem, hence the explicit inclusion of Clause~(2).
One possible explanation for the lack of a partition theorem is that for cardinals $\mu<\theta<\kappa$,
the collection corresponding to the failure of diamond on $\mu$-bounded ladder systems for $\theta$-colourings does not form a $\kappa$-complete ideal which prevents the use of standard non-saturation results such as Ulam's theorem and its generalisations \cite{paper47}.

The third feature of the next theorem is motivated by the study of \emph{relative club guessing}
and can be most concisely expressed as $\CG_\mu(E^\kappa_\mu,T,\kappa)$ in the sense of \cite[Definition~2.2]{paper46}.
To make our explanation self-contained, notice that the $\delta \in S_j$ below not only guesses the global function $f$ on the set $[B_\delta]^n$, but additionally, given a club $D\s \kappa$ we ensure that $\delta$ simultaneously guesses the club $D$ \emph{relative to }$T$, that is, $B_\delta \s D \cap T$. For more on the utility of this feature, we refer the reader to \cite{paper46}.

When reading the next theorem for the first time, the reader may want to ease on themselves by assuming that $n=1$ and $\Omega=\mu=\theta=\omega$, so that $\kappa=2^{2^{2^{\aleph_0}}}$.

\begin{thm}\label{thm32} Suppose that $n$ is a positive integer,
$\Omega,\mu\le\theta$ are infinite cardinals with $\mu$ regular,
$\kappa:=\beth_{n+2}(\theta)$ is smaller than $\Omega^{+\omega}$,
and $T\s\kappa$ is stationary.
Then there are:
\begin{enumerate}
\item a map $g:E^\kappa_\mu\rightarrow\theta$,
\item a partition $\langle S_j\mid j<\kappa\rangle$ of $E^\kappa_\mu$ into stationary sets, and
\item a $\mu$-bounded ladder system $\vec L=\langle B_\delta\mid\delta\in E^\kappa_\mu\rangle$,
\end{enumerate}
such that for every club $D\s\kappa$, for every function $f:[\kappa]^n\rightarrow\theta$, for every $j<\kappa$, there is a $\delta\in S_j$ such that $B_\delta\s D\cap T$ and
$f``[B_\delta]^n=\{g(\delta)\}$.
\end{thm}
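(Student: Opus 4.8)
The plan is to run, in the $n$-dimensional setting of Definition~\ref{defdioN}, the same four-stage pipeline developed in Section~\ref{sec2} --- a wide $\diamondsuit^*$ from a counting argument, an $\onto$-colouring, a $\Phi$-transfer, and a final narrowing --- and then to graft onto it three extra features: constant local maps, a $\kappa$-fold stationary partition, and relative club guessing. Throughout I set $\lambda:=\beth_{n+1}(\theta)$, so that $\kappa=2^\lambda$, and record the two arithmetic facts that drive the construction. First, since $\Omega\le\theta<\kappa<\Omega^{+\omega}$, the cardinal $\kappa$ is a finite successor of $\Omega$, hence regular. Second, $\lambda^\theta=(2^{\beth_n(\theta)})^\theta=2^{\beth_n(\theta)}=\lambda$, which is exactly the hypothesis of Lemma~\ref{ontothird}.

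First I would build the combinatorial skeleton. As in Lemma~\ref{countinglemma}, using $\mu^+\le\lambda<\kappa$ and \cite{Sh:420}, I fix a $\mu$-bounded, weakly coherent $C$-sequence $\vec C=\langle C_\delta\mid\delta\in S\rangle$ over a stationary $S\s E^\kappa_\mu$ lying in the approachability ideal $I[\kappa]$; the finite-successor hypothesis together with Fact~\ref{meetingfact} keeps the meeting numbers $m(\varkappa,\mu)$ below $2^\lambda$ for the relevant $\varkappa<2^\lambda$ (just as in the proof of Theorem~\ref{thm211}), which is all the counting argument needs, so $\diamondsuit^*(\vec C,2^\lambda,2^\lambda)$ holds. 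Into this same construction I would fold the two structural demands: arrange that the ladders concentrate on $T$ and guess clubs relative to $T$ --- i.e.\ $\CG_\mu(E^\kappa_\mu,T,\kappa)$ in the sense of \cite{paper46}, so that $B_\delta\s D\cap T$ is attainable for every club $D$ --- and, since no abstract partition theorem for this kind of diamond is available, install the splitting $\langle S_j\mid j<\kappa\rangle$ into $\kappa$ pairwise disjoint stationary pieces by explicit bookkeeping, distributing the $\kappa=2^\lambda$ coordinates of the wide diamond among the pieces.

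Next, $n$-dimensionality and the colour count. The role played by Lemma~\ref{lemma114} in Section~\ref{sec2} is now taken by Lemma~\ref{ontothird}: from $\lambda^\theta=\lambda$ it produces a $\theta^+$-complete proper ideal $I$ on $\lambda$ with $\onto(I^+,[2^\lambda]^{<\theta},\theta)$, i.e.\ a colouring $c:\lambda\times\kappa\to\theta$ that is surjective on every rectangle $\{\alpha\}\times B$ indexed by an $I$-positive set of $\alpha$'s. Feeding this $I$-positive $\onto$-colouring into the $I$-positive strengthening of Lemma~\ref{lemma115} (the variant described in the Remark following its proof, which is what $\diamondsuit^*$ supplies anyway) yields $\Phi(\vec C\restriction S',2^\lambda,\theta)$ for every stationary $S'\s S$, and Lemma~\ref{l210} then narrows this to an instance of $\diamondsuit^n$ on $\vec C\restriction S'$. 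As promised in Section~\ref{sec2}, the passage from one to $n$ dimensions costs only bookkeeping over $[C_\delta]^n$ in place of $C_\delta$.

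Finally --- and this is where I expect the real work to lie --- I would upgrade \emph{guessing of restrictions} to \emph{guessing of a constant value}. The engine is Erd\H os--Rado: the choice $\kappa=\beth_{n+2}(\theta)$ gives $\beth_{n-1}(\theta)^+\to(\theta^+)^n_\theta$, so any $\theta$-colouring of $[\kappa]^n$ is richly supplied with homogeneous sets. The statement to prove is a \emph{global persistence} lemma: for every $f:[\kappa]^n\to\theta$, on a club of $\delta\in E^\kappa_\mu$ the restriction $f\restriction[C_\delta]^n$ admits a homogeneous subset that is \emph{cofinal} in $\delta$ of order type $\mu$ --- the case $n=1$ being the elementary remark that some colour class $f^{-1}(c)$ is unbounded in $\kappa$ and hence cofinal in every $\delta\in\acc^+(f^{-1}(c))$, the general case following by an Erd\H os--Rado argument run inside an elementary chain. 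One then reads off from the single local function delivered by $\diamondsuit^n$, for each $\delta$, a cofinal homogeneous $B_\delta\s C_\delta$ of order type $\mu$ together with its colour $g(\delta)$; on the stationary set where the guess is correct and the ladder already lies in $D\cap T$, the equality $f``[B_\delta]^n=\{g(\delta)\}$ drops out. The main obstacle is to make this extraction coexist with everything before it \emph{inside one ladder system}: the Erd\H os--Rado thinning wants freedom to prune the ladder, while relative club guessing and the rigid $\diamondsuit^n$-guessing constrain it, and the cofinal-homogeneous-set phenomenon must be arranged to persist on a set meeting every $S_j$ and every club $D$ simultaneously. Reconciling these --- in effect interleaving the Ramsey extraction with the approachability and club-guessing construction of $\vec C$, all keyed to the same reflecting submodels --- is the crux, and is exactly where the largeness $\kappa=\beth_{n+2}(\theta)$ is indispensable.
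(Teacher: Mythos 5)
Your proposal correctly identifies most of the ingredients --- Lemma~\ref{ontothird}, the packaging of $\lambda$-many functions into a single function via a bijection ${}^\lambda\theta\leftrightarrow\kappa$, the diagonalization against a small family $\mathcal P_\delta$, and the fact that Erd\H{o}s--Rado is the engine behind constancy --- but the step you yourself flag as the crux is resolved in the paper by a mechanism you do not have, and the substitute you offer is false. Your ``global persistence lemma'' fails already for $n=1$: since $\theta\ge\mu$, a function $f:\kappa\rightarrow\theta$ can be injective on a ladder of order type $\mu$, so no colour class is cofinal in $\delta$ along that ladder; and even when some colour class $f^{-1}(c)$ is unbounded in $\kappa$, nothing forces it to meet a \emph{prescribed} ladder $C_\delta$ cofinally. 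Consequently the plan of starting from a $\mu$-bounded $C$-sequence in $I[\kappa]$ (as in Lemma~\ref{countinglemma}) and then pruning it cannot work, and in fact the paper's proof of this theorem uses no $\diamondsuit^*$/counting stage at all: the family $\mathcal P_\delta$ is simply $\bigcup_{\epsilon\in C_\delta}{}^{[C_\delta\setminus\epsilon]^n}\delta$, whose size is controlled by Hausdorff's lemma.

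The actual mechanism is this. Take $\chi:=(\beth_{n-1}(\theta))^+$, so that $\chi\rightarrow(\mu)^n_\theta$, and build (Claim~\ref{claim351}, in the style of Shelah's club-guessing arguments) a \emph{coherent $\chi$-bounded} $C$-sequence $\langle C_\rho\mid\rho\in R\rangle$ on a stationary subset of $E^\kappa_{\le\chi}$ such that for every club $D$ and every $\varepsilon$ there is $\rho\in R\cap E^\kappa_\chi$ with $\nacc(C_\rho)\s D\cap T$ and $\min(C_\rho)\ge\varepsilon$. The domain of the eventual ladder system is $\bigcup_\rho\acc(C_\rho)\cap E^\kappa_\mu$, and the partition $\langle S_j\mid j<\kappa\rangle$ falls out of the values $\min(C_\rho)$ together with coherence --- not from distributing diamond coordinates. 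Erd\H{o}s--Rado is applied to each counterexample $f_\alpha$ on the order-type-$\chi$ set $\nacc(C_\rho\setminus\epsilon)$, producing a homogeneous set $H_\alpha$ of order type $\mu$ that is cofinal in \emph{its own} supremum $\delta_\alpha\in\acc(C_\rho)\cap E^\kappa_\mu$; a $\sigma^+$-complete ideal, where $\sigma:=2^{<\chi}=\beth_n(\theta)\ge|C_\rho|$, then collects an $I^+$-set of $\alpha$'s at a single $\delta$. Your parameters for Lemma~\ref{ontothird} ($\theta^+$-completeness and $\theta$ colours, from $\lambda^\theta=\lambda$) are too weak both for that collection step and for indexing the $\sigma$-many candidate pairs $(\tau,B)$ with $B$ a cofinal subset of $\nacc(C_\delta)$ of order type $\mu$; the final $B_\delta$ is such a subset selected by the guessing apparatus, not $C_\delta$ itself.
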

\begin{proof}
By the Erd\H{o}s-Rado theorem,
the cardinal $\chi:=(\beth_{n-1}(\theta))^+$ satisfies $\chi\rightarrow(\mu)^n_\theta$.
Put $\sigma:=2^{<\chi}$ and
$\lambda:=2^\sigma$. Consequently, $\sigma = \beth_n(\theta)$, $\lambda = \beth_{n+1}(\theta)$, and $\kappa = \beth_{n+2}(\theta) = 2^\lambda$. Altogether,
$$\max\{\Omega, \mu\}\le\theta<\chi\le\sigma<\lambda=\lambda^\sigma<\kappa<\Omega^{+\omega}.$$
Fix an $m<\omega$ such that $\kappa=\lambda^{+m+1}$ and let $\Lambda:=\lambda^{+m}$, so that $\lambda \leq \Lambda$.
Note that Hausdorff's lemma implies that $\Lambda^\sigma=\Lambda<\kappa$.
In addition, since $\lambda^\sigma=\lambda$, Lemma~\ref{ontothird}
provides us with a $\sigma^+$-complete proper ideal $I$ over $\lambda$ such that $\onto(I^+,[\kappa]^{<\sigma},\sigma)$ holds.

\begin{claim}\label{claim351} There exists a $\chi$-bounded $C$-sequence $\langle C_\rho\mid \rho\in R\rangle$ such that:
\begin{itemize}
\item $R\s \acc(\kappa)\cap E^\kappa_{\le\chi}$ is stationary;
\item for every $\rho\in R$, for every $\delta\in\acc(C_\rho)$, $\delta\in R$ and $C_\delta=C_\rho\cap\delta$;
\item for every club $D\s\kappa$, for every $\varepsilon<\kappa$,
there exists a $\rho\in R\cap E^\kappa_{\chi}$ such that $\nacc(C_\rho)\s D\cap T$ and $\min(C_\rho)\ge\varepsilon$.
\end{itemize}
\end{claim}
\begin{why} The proof is similar to that of \cite[Lemma~2]{Sh:237e}.
As $\kappa=\Lambda^+$ and $\Lambda^\chi=\Lambda$,
for every $\rho\in \acc(\kappa)\cap E^\kappa_{\le\chi}$ we may let $\langle C_{\rho,j}\mid j<\Lambda\rangle$ be an enumeration of all clubs in $\rho$ of order-type no more than $\chi$.
In addition, using $\Lambda^\chi=\Lambda$, by the Engelking-Karlowicz theorem,
we may fix a sequence $\vec f=\langle f_i\mid i<\Lambda\rangle$
of functions from $\kappa$ to $\Lambda$
such that for every function $g:x\rightarrow\Lambda$ with $x\in[\kappa]^\chi$, there exists an $i<\Lambda$ such that $g\s f_i$.
Denote $C_\rho^i:=C_{\rho,f_i(\rho)}$.
Clearly, for every $i<\Lambda$, $\vec C^i:=\langle C_\rho^i\mid \rho\in\acc(\kappa)\cap E^\kappa_{\le\chi}\rangle$ is a $\chi$-bounded $C$-sequence.

We claim that there exists an $i<\Lambda$ such that for every club $D\s\kappa$, for every $\varepsilon<\kappa$,
there exists a $\rho\in R\cap E^\kappa_{\chi}$ such that:
\begin{enumerate}
\item $\nacc(C_\rho^i)\s D\cap T$;
\item $\min(C^i_\rho)\ge\varepsilon$
\item for every $\delta\in\acc(C^i_\rho)$, $C^i_\delta=C^i_\rho\cap\delta$.
\end{enumerate}
Indeed, otherwise, for each $i<\Lambda$, we may fix a club $D_i\s\kappa$ and some $\varepsilon_i<\kappa$ such that for every $\rho\in R\cap E^\kappa_\chi$,
either (1) fails for $D_i$ or (2) fails for $\varepsilon_i$ or (3) fails.
Let $D:=\bigcap_{i<\Lambda}D_i$ and $\varepsilon:=\sup_{i<\Lambda}\varepsilon_i$.
As $T$ is stationary, we may now fix some $\rho\in E^\kappa_\chi$ above $\varepsilon$ such that $D\cap T$ is cofinal in $\rho$.
Fix a club $C$ in $\rho$ of order-type $\chi$ such that $\nacc(C)\s D\cap T$ and $\min(C)=\varepsilon$.
Pick a function $g:\acc(C)\cup\{\rho\}\rightarrow\Lambda$ such that $g(\delta)=j$ implies $C\cap\delta=C_{\delta,j}$.
Pick an $i<\Lambda$ such that $g\s f_i$. This implies that
$$C^i_\rho = C_{\rho, f_i(\rho)} = C_{\rho, g(\rho)} = C,$$
and for $\delta \in \acc(C)$,
$$C^i_\delta = C_{\delta, f_i(\delta)} = C_{\delta, g(\delta)} = C\cap \delta.$$
Then we arrive at the following contradiction:
\begin{enumerate}[(i)]
\item $\nacc(C_\rho^i)= \nacc(C)\s D\cap T\s D_i\cap T$;
\item $\min(C^i_\rho)=\min(C)=\varepsilon\ge\varepsilon_i$;
\item for every $\delta\in\acc(C^i_\rho)$, $C^i_\delta=C\cap\delta=C^i_\rho\cap\delta$.
\end{enumerate}

Thus, pick $i<\Lambda$ such that for every club $D\s\kappa$, for every $\varepsilon<\kappa$,
there exists a $\rho\in R\cap E^\kappa_{\chi}$ such that (1)--(3) holds.
Set
$$R:=\{ \rho\in\acc(\kappa)\cap E^\kappa_{\le\chi}\mid \forall\delta\in\acc(C^i_\rho)\,[C^i_\rho\cap\delta=C^i_\delta]\}.$$
Then $\langle C^i_\rho\mid \rho\in R\rangle$ is as sought.
\end{why}

Let $\langle C_\rho\mid \rho\in R\rangle$ be given by the claim.
It follows that the set
$$\mathcal E:=\{ \varepsilon<\kappa\mid \forall D\s\kappa\text{ club }\exists \rho\in R\cap E^\kappa_{\chi}\,[\nacc(C_\rho)\s^* D\cap T\ \&\ \min(C_\rho)=\varepsilon]\}$$
is cofinal in $\kappa$. For every $j<\kappa$, let $\mathcal E(j)$ denote the unique $\varepsilon\in\mathcal E$ to satisfy $\otp(\mathcal E\cap\varepsilon)=j$,
and then let
\begin{itemize}
\item $R_j:=\{\rho\in R\cap E^\kappa_\chi\mid \min(C_\rho)=\mathcal E(j)\}$, and
\item $S_j:=\bigcup\{ \acc(C_\rho)\mid \rho\in R_j\}\cap E^\kappa_\mu$.
\end{itemize}

Clearly, $\langle S_j\mid j<\kappa\rangle$ is a partition into stationary sets of some subset $S$ of $E^\kappa_\mu$, since for $j < j'< \kappa$ we have that $\mathcal E(j) \neq \mathcal E(j')$ and by the coherence property of $\langle C_\rho\mid \rho\in R\rangle$.
For each $\delta\in S$, let
$$\mathcal P_\delta:=\bigcup\{{}^{[C_\delta\setminus\epsilon]^n}\delta\mid \epsilon\in C_\delta\}$$
and note that
$$|\mathcal P_\delta|\le\Lambda^{\chi}\le \Lambda^\sigma= \Lambda<\Lambda^+ =\kappa.$$
For each $\delta\in S$, since $\otp(C_\delta)\le\chi$ but $\cf(\delta)=\mu<\chi$, it is the case that $\otp(C_\delta)<\chi$.
Combining this with $\theta\cdot 2^{<\chi}=\sigma$, let us also fix an enumeration $\langle (\tau_{\delta,i},B_{\delta,i})\mid i<\sigma\rangle$ of all pairs $(\tau,B)$
such that $\tau<\theta$ and $B$ is a cofinal subset of $\nacc(C_\delta)$ of ordertype $\mu$.

Moving on, as $\kappa=2^\lambda=\theta^\lambda$, let us fix a bijection $\psi:{}^\lambda\theta\leftrightarrow\kappa$.
For every $\alpha<\lambda$, define a map $\psi_\alpha:\kappa\rightarrow\theta$ via
$$\psi_\alpha(\xi):=\psi^{-1}(\xi)(\alpha).$$
For every subset $C\s\kappa$, for every map $\eta:[C]^n\rightarrow\kappa$, for every $\alpha<\lambda$, we let $\eta^\alpha:=\psi_\alpha\circ\eta$, so that $\eta^\alpha$ is a function from $[C]^n$ to $\theta$.
We say that $f:[C]^n\rightarrow\theta$ is \emph{good}
iff there is some cofinal subset $H\s\nacc(C)$ such that $f\restriction[H]^n$ is constant.

Let $\delta\in S$ and $\eta\in\mathcal P_\delta$. So, for some $\epsilon \in C_\delta$, $\eta: [C_\delta\setminus\epsilon]^n \rightarrow \delta$.
As $\im(\eta)\s\kappa$, for every $\alpha<\lambda$, $\eta^\alpha:[C_\delta\setminus\epsilon]^n\rightarrow \theta$,
so we let $A_\eta:=\{\alpha<\lambda\mid \eta^\alpha\text{ is good}\}$.
Pick a map $h_\eta:\lambda\rightarrow\sigma$ such that for every $\alpha\in A_\eta$,
$i:=h_\eta(\alpha)$ satisfies that $B_{\delta,i}$ is a cofinal subset of $\nacc(C_\delta\setminus\epsilon)$ for which $\eta^\alpha\restriction[B_{\delta,i}]^n$ is constant with value $\tau_{\delta,i}$.

\begin{claim}\label{claim1061} Let $\delta\in S$. There exists a function $g_\delta:\lambda\rightarrow\sigma$ such that,
for every $\eta\in\mathcal P_\delta$ with $A_\eta\in I^+$,
there exists an $\alpha\in A_\eta$ with $h_\eta(\alpha)= g_\delta(\alpha)$.
\end{claim}
\begin{why} Fix a colouring $c:\lambda\times\kappa\rightarrow\sigma$ witnessing $\onto(I^+,[\kappa]^{<\sigma},\sigma)$.
Let $\eta\in\mathcal P_\delta$ such that $A_\eta\in I^+$.
Note that the set $B_\eta:=\{\beta<\kappa\mid \forall\alpha\in A_\eta\,[h_\eta(\alpha)\neq c(\alpha,\beta)]\}$ has size less than $\sigma$.
Indeed, otherwise, $B_\eta\in[\kappa]^{\sigma}$,
and so since $A_\eta\in I^+$,
we may pick an $\alpha\in A_\eta$ such that $c[\{\alpha\}\times B_\eta]=\sigma$,
and in particular, $h_\eta(\alpha) \in c[\{\alpha\}\times B_\eta]$.

As $|\mathcal P_\delta|<\kappa$, it follows that we may pick $\beta\in \kappa\setminus\bigcup\{B_\eta\mid \eta\in\mathcal P_\delta\ \&\ A_\eta\in I^+\}$.
Define $g_\delta:\lambda\rightarrow\sigma$ via $g_\delta(\alpha):=c(\alpha,\beta)$. Then $g_\delta$ is as sought.
\end{why}

Let $\langle g_\delta\mid\delta\in S\rangle$ be given by the preceding claim.

\begin{claim}  Let $j<\kappa$. There exists an $\alpha<\lambda$ such that for every function $f:[\kappa]^n\rightarrow\theta$,
for every club $D\s\kappa$, there is a $\delta\in S_j$ such that $B_{\delta,g_\delta(\alpha)}\s D\cap T$ and $f``[B_{\delta,g_\delta(\alpha)}]^n=\{\tau_{\delta,g_\delta(\alpha)}\}$.
\end{claim}
\begin{why} Suppose not. Then, for every $\alpha<\lambda$, we may fix a function $f_\alpha:[\kappa]^n\rightarrow\theta$ and a club $D_\alpha\s\kappa$
such that for every $\delta\in S_j$,
either $B_{\delta,g_\delta(\alpha)}\nsubseteq D_\alpha\cap T$ or $f_\alpha``[B_{\delta,g_\delta(\alpha)}]^n\neq\{\tau_{\delta,g_\delta(\alpha)}\}$.
Using that $\psi:{}^\lambda\theta\leftrightarrow\kappa$ is a bijection define a map $\eta:[\kappa]^n\rightarrow\kappa$ via:
$$\eta(\gamma_1,\ldots,\gamma_n):=\psi(\langle f_\beta(\gamma_1,\ldots,\gamma_n)\mid \beta<\lambda\rangle).$$

Consider the club $D:=\{\delta\in\bigcap_{\alpha<\lambda}D_\alpha\mid \eta``[\delta]^n\s\delta\}$.
Pick $\rho\in R_j$ and a large enough $\epsilon\in C_\rho$ such that $\nacc(C_\rho\setminus\epsilon)\s D\cap T$.
As $\otp(\nacc(C_\rho\setminus\epsilon))=\chi$, recalling that $\chi\rightarrow(\mu)^n_\theta$ holds,
for every $\alpha<\lambda$,
we may pick a subset $H_\alpha\s\nacc(C_\rho\setminus\epsilon)$ of order-type $\mu$ such that $f_\alpha\restriction[H_\alpha]^n$ is constant,
and clearly $\delta_\alpha:=\sup(H_\alpha)$ is an element of $\acc(C_\rho\setminus \epsilon)\cap E^\kappa_\mu$, and hence of $S_j\cap D$ too.
As $|C_\rho|=\chi\le\sigma$ and $I$ is a proper $\sigma^+$-complete ideal on $\lambda$,
let us now pick a $\delta\in S_j\cap D$ such that $A^*:=\{\alpha<\lambda\mid\delta_\alpha=\delta\}$ is in $I^+$.

Since $\delta\in\acc(C_\rho)$, it is the case that $C_\delta=C_\rho\cap\delta$, and hence, for every $\alpha\in A^*$, $H_\alpha$ witnesses that $f_\alpha\restriction[C_\delta\setminus\epsilon]^n$ is good.

Since $\delta\in D$,
we know that $\bar\eta:=\eta\restriction[C_\delta\setminus\epsilon]^n$ is in $\mathcal P_\delta$.
For every $\alpha<\lambda$, recall that $\bar\eta^\alpha$ is defined as $\psi_\alpha\circ\bar\eta$, so that, for every $(\gamma_1,\ldots,\gamma_n)\in[C_\delta\setminus\epsilon]^n$,
$$\bar\eta^\alpha(\gamma_1,\ldots,\gamma_n)=\psi_\alpha(\psi(\langle f_\beta(\gamma_1,\ldots,\gamma_n)\mid \beta<\lambda\rangle))=f_\alpha(\gamma_1,\ldots,\gamma_n).$$
That is, $\bar\eta^\alpha=f_\alpha\restriction[C_\delta\setminus\epsilon]^n$ for every $\alpha<\lambda$,
so $A_{\bar\eta}$ is equal to $\{\alpha<\lambda\mid f_\alpha\restriction[C_\delta\setminus\epsilon]^n\text{ is good}\}$ and it covers the $I^+$-set $A^*$,
and hence $A_{\bar\eta}\in  I^+$.
Recalling that $g_\delta$ was given to us by Claim~\ref{claim1061}, we may now pick an $\alpha\in A_{\bar\eta}$ with $h_{\bar\eta}(\alpha)= g_\delta(\alpha)$.
By the definition of $h_{\bar\eta}$, this means that $B_{\delta,g_\delta(\alpha)}$ is a cofinal subset of $\nacc(C_\delta\setminus\epsilon)$ for which $\bar\eta^\alpha\restriction[B_{\delta,g_\delta(\alpha)}]^n$ is constant with value $\tau_{\delta,g_\delta(\alpha)}$.
But
$$B_{\delta, g_\delta(\alpha)} \s\nacc(C_\delta\setminus\epsilon)\s\nacc(C_\rho\setminus\epsilon)\s D\cap T\s D_\alpha\cap T$$ and $\bar\eta^\alpha\restriction [B_{\delta,g_\delta(\alpha)}]^n=f_\alpha\restriction [B_{\delta,g_\delta(\alpha)}]^n$,
contradicting the choice of $D_\alpha$ and $f_\alpha$.
\end{why}

For every $j<\kappa$, let $\alpha_j<\lambda$ be given by the preceding claim,
and then for every $\delta\in S_j$, let $B_\delta:=B_{\delta,g_\delta(\alpha_j)}$ and $g(\delta):=\tau_{\delta,g_\delta(\alpha_j)}$.
Then $\langle B_\delta\mid\delta\in S\rangle$, $\langle S_j\mid j<\kappa\rangle$ and $g:E^\kappa_\mu\rightarrow\theta$ are as sought
modulo the fact that $S=\bigcup_{j<\kappa}S_j$ may possibly be a proper subset of $E^\kappa_\mu$, but this can be mitigated  by allocating all the left-out points to $S_0$
and defining $B_\delta$ and $g(\delta)$ arbitrarily over these points.
\end{proof}
\begin{remark}\label{remark38} An inspection of the preceding proof shows that
if $n>1$, then we may as well take $\sigma$ to be $\chi$ since in this case, $\theta\cdot\chi^\mu=\chi$. Thus, for $n>1$, the proof yields the same conclusion for $\kappa:=\beth_2((\beth_{n-1}(\theta))^+)$
(instead of $\beth_{n+2}(\theta)$) assuming it is smaller than $\Omega^{+\omega}$.
\end{remark}
\newpage
\begin{remark} In the special case $\mu=\theta$, assuming $\diamondsuit(E^\kappa_\mu)$, it is possible to arrange a $\mu$-bounded ladder system $\vec L=\langle A_\delta\mid\delta\in E^\kappa_\mu\rangle$
and a sequence $\langle f_\delta \mid \delta \in E^\kappa_\mu\rangle$ witnessing $\diamondsuit^n(\vec L, \theta)$
as in Definition~\ref{defdioN}
such that, for some `wild' function $f:[\kappa]^n\rightarrow\theta$, for every $\delta$ in the stationary set $\{\delta\in E^\kappa_\mu\mid f \restriction [A_\delta]^n = f_\delta\}$, it is the case that $f\restriction[A_\delta]^n$ is a \emph{bijection}.
For the purposes of our application of Theorem~\ref{thm32} in Corollary~\ref{cor47} however this is no good,
and we need at least a small portion of the Ramsey-theoretic feature provided by the function $g$.
The reason can be gleaned from considering the negation of Clause~(3) of Fact~\ref{thm31}.
\end{remark}
We conclude this section with pointing out that similar to Remark~\ref{steppingupremark},
by using standard stepping up methods
one can transfer Corollary~\ref{cor33} to $\aleph_{\omega+1}$, as follows.

\begin{cor} Suppose that $\aleph_\omega$ is a strong limit.
For every positive integer $n$ and every $\theta<\aleph_\omega$,
there exist a ladder system $\vec L=\langle A_\delta\mid\delta<\aleph_{\omega+1}\rangle$ and a function $g:\aleph_{\omega+1}\rightarrow\theta$
such that for every function $f:[\aleph_{\omega+1}]^n\rightarrow\theta$,
for every uncountable cardinal $\mu<\aleph_\omega$,
$\{\delta\in E^{\aleph_{\omega+1}}_\mu\mid f``[A_\delta]^n=\{g(\delta)\}\}$ is a reflecting stationary set. \qed
\end{cor}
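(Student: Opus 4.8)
The plan is to carry out the standard stepping-up announced in Remark~\ref{steppingupremark}, importing the guessing supplied by Theorem~\ref{thm32} at a cardinal below $\aleph_\omega$ through a scale on $\aleph_{\omega+1}$. First I would fix $n$ and $\theta$ and reduce to a single uncountable regular $\mu<\aleph_\omega$: since the sets $E^{\aleph_{\omega+1}}_\mu$ are pairwise disjoint and the conclusion quantifies over $\mu$ separately, it suffices to define $\vec L$ and $g$ on each $E^{\aleph_{\omega+1}}_\mu$ in turn (declaring $A_\delta$ to be an arbitrary cofinal subset of $\delta$ and $g(\delta):=0$ elsewhere). Write $\chi:=(\beth_{n-1}(\theta))^+$, so that the Erd\H{o}s--Rado theorem gives $\chi\rightarrow(\mu)^n_\theta$, and $\kappa:=\beth_{n+2}(\theta)$, which is a regular cardinal below $\aleph_\omega$ by the strong limit hypothesis. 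The architecture mirrors the two-tier structure inside the proof of Theorem~\ref{thm32}: I would single out \emph{mother} points of cofinality $\chi$ carrying coherent clubs of order-type $\chi$, and below each such club extract, via $\chi\rightarrow(\mu)^n_\theta$, a cofinal monochromatic ladder at a point of cofinality $\mu$.

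Second, I would record the downstairs data. Applying Theorem~\ref{thm32} at $\kappa=\beth_{n+2}(\theta)$ with $\Omega:=\omega$ and $T:=\kappa$ yields a colouring $g_\kappa:E^\kappa_\mu\rightarrow\theta$, a partition $\langle S_j\mid j<\kappa\rangle$ of $E^\kappa_\mu$ into stationary sets, and a $\mu$-bounded ladder system with the club-relative guessing property. What I actually need to export is not the literal ladder system but the guessing engine: the ability to anticipate, among fewer than $\kappa$ candidates, the restriction of an arbitrary colouring to an order-type-$\chi$ block. In the proof of Theorem~\ref{thm32} this is precisely what the sets $\mathcal P_\delta$ together with the principle $\onto(I^+,[\kappa]^{<\sigma},\sigma)$ accomplish, and the abundance provided by the partition $\langle S_j\rangle$ gives the robustness needed to diagonalise.

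Third comes the transfer itself. Since $\aleph_\omega$ is a strong limit, I would fix a scale $\langle h_\alpha\mid\alpha<\aleph_{\omega+1}\rangle$, eventually-increasing and cofinal in $\prod_{k<\omega}\rho_k$ for an increasing sequence of regular cardinals $\rho_k\nearrow\aleph_\omega$ with $\rho_0>\kappa$, and --- crucially --- a \emph{good} scale, so that on a club of $\aleph_{\omega+1}$ the relevant points are approachable (this is the step at which $\cf(\aleph_\omega)=\omega$ lying outside the finite bad set of Remark~\ref{steppingupremark} is used). The good scale replaces the coherent $C$-sequence of Claim~\ref{claim351}, which cannot be produced directly at $\aleph_{\omega+1}$ because $\aleph_\omega^\chi>\aleph_\omega$; instead it furnishes, on a club of $\rho\in E^{\aleph_{\omega+1}}_\chi$, a coherent club $C_\rho$ of order-type $\chi$ whose non-accumulation points can be steered into any prescribed club $D$. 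Along such a $C_\rho$ the exact-upper-bound structure of the scale lets me read off the restriction of any $f:[\aleph_{\omega+1}]^n\rightarrow\theta$ to $[C_\rho]^n$ as an object of size at most $2^{<\chi}<\aleph_\omega$ that the imported downstairs machinery can guess. Feeding a guessed block into $\chi\rightarrow(\mu)^n_\theta$ produces, at the point $\delta:=\sup(H)\in E^{\aleph_{\omega+1}}_\mu$ for a monochromatic $H\s\nacc(C_\rho)$ of order-type $\mu$, a cofinal ladder $A_\delta:=H$ on which $f``[A_\delta]^n$ is constant; I set $g(\delta)$ to be that constant value, transported from the downstairs $g_\kappa$.

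Finally, stationarity and reflection are read off the two-tier structure. For a fixed $f$, the club-relative feature guarantees that for every club $D\s\aleph_{\omega+1}$ there is a mother $\rho$ with $\nacc(C_\rho)\s D$, hence a good $\delta\in D$; thus $G_{f,\mu}:=\{\delta\in E^{\aleph_{\omega+1}}_\mu\mid f``[A_\delta]^n=\{g(\delta)\}\}$ meets every club and is stationary. Moreover, running the extraction inside any prescribed tail or sub-club of a capturing $C_\rho$ yields stationarily-in-$\rho$ many good $\delta<\rho$ of cofinality $\mu$; since $\cf(\rho)=\chi>\omega$, this witnesses that $G_{f,\mu}$ reflects at the (stationarily many) points $\rho$ of uncountable cofinality $\chi$, so $G_{f,\mu}$ is a reflecting stationary set. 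I expect the main obstacle to be the coding step of the third paragraph: arranging the scale so that the restriction of an arbitrary $n$-dimensional colouring of $[\aleph_{\omega+1}]^n$ to an order-type-$\chi$ mother club is faithfully and \emph{uniformly} encoded as an object guessable by the single-cardinal machinery of Theorem~\ref{thm32}, while keeping the cofinalities $\chi$ and $\mu$ and the coherence of the clubs $C_\rho$ aligned with the exact upper bounds of the scale. Everything after that is the bookkeeping routine to stepping-up arguments.
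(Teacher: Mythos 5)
The paper offers no proof of this corollary --- it is stated with a \qed{} and an appeal to ``standard stepping up methods'' in the spirit of Remark~\ref{steppingupremark} --- so your sketch can only be judged on its own terms. Its skeleton is right: the reduction to a single uncountable regular $\mu$, the two-tier architecture of mother points of cofinality $\chi=(\beth_{n-1}(\theta))^+$ with coherent clubs feeding the partition relation $\chi\rightarrow(\mu)^n_\theta$, and the correct diagnosis that Claim~\ref{claim351} cannot be reproduced verbatim at $\aleph_{\omega+1}$ because $\aleph_\omega^{\chi}>\aleph_\omega$ (so that approachability at successors of singulars and Shelah-style club guessing must substitute for the $\Lambda^{\chi}=\Lambda$ counting). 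But the two steps you flag as ``the main obstacle'' and ``bookkeeping'' are where the actual content lives, and neither is bridged. First, ``importing the downstairs guessing engine'' from Theorem~\ref{thm32} at $\kappa=\beth_{n+2}(\theta)$ does not make sense as stated: that machinery guesses colourings of $[\kappa]^n$, and there is no pushforward or pullback turning an arbitrary $f:[\aleph_{\omega+1}]^n\rightarrow\theta$ into a colouring of $[\kappa]^n$ whose downstairs guesses yield upstairs guesses. Note also that the \emph{wide} diamond is not the issue at $\aleph_{\omega+1}$: since $|{}^{[C_\delta]^n}\theta|\le 2^{\chi}<\aleph_\omega$, one may simply take $\mathcal P_\delta$ to be all candidate restrictions. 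What genuinely needs replacing is the \emph{selection} step (Claim~\ref{claim1061} and the final claim of Theorem~\ref{thm32}), which hinges on $\kappa=2^\lambda$ with $\lambda^\sigma=\lambda$ and the bijection $\psi:{}^\lambda\theta\leftrightarrow\kappa$ used to diagonalise against $\lambda$-many potential counterexamples $(f_\alpha,D_\alpha)$ at once; no cardinal $\lambda$ with $2^\lambda=\aleph_{\omega+1}$ is available, and your scale is not shown to supply a substitute coding.

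Second, the reflection argument conflates two different things. Running Erd\H{o}s--Rado inside a tail or sub-club of a capturing $C_\rho$ shows only that a good \emph{candidate} pair $(\tau,B)$ exists there; the ladder $A_\delta$ and the colour $g(\delta)$ are fixed in advance, and the entire difficulty of the theorem is making that advance selection succeed. The mechanism of Theorem~\ref{thm32} quantifies over clubs $D\s\kappa$ and, for each such $D$, produces \emph{one} good $\delta$ at \emph{some} mother $\rho$ depending on $D$; it does not produce, for a single fixed $\rho$ of cofinality $\chi$, a good $\delta$ inside every club of $\rho$, which is what stationarity of $G_{f,\mu}\cap\rho$ in $\rho$ (hence reflection) requires. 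Getting this localisation --- e.g.\ by relativising the selection machinery to each mother point, exploiting that $f\restriction[\nacc(C_\rho)]^n$ ranges over only $2^{\chi}<\aleph_\omega$ possibilities --- is an additional argument that the sketch does not supply. As it stands, the proposal is a plausible plan with the two essential steps left open, one of them (reflection) argued by a reasoning that would not survive scrutiny.
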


\section{Ladder systems and topological spaces}\label{sec1}

In this section, we give the first topological application of diamonds on ladder systems to topology.
While not stated explicitly so far, we shall want the topological spaces constructed in this paper to at least be Hausdorff. Thus,
we shall need the following folklore fact.
\begin{fact}\label{fact32} For a ladder system $\vec L=\langle A_\delta\mid\delta\in S\rangle$, all of following are equivalent:
\begin{enumerate}
\item $X_{\vec L}$ is Hausdorff;
\item $X_{\vec L}$ is Hausdorff and regular;
\item for every pair $\gamma<\delta$ of ordinals from $S$, $\sup(A_\gamma\cap A_\delta)<\gamma$.
\end{enumerate}
\end{fact}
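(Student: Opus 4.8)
The plan is to prove the cycle $(2)\implies(1)\implies(3)\implies(2)$, the first implication being trivial. Before starting, I would fix convenient notation for the basic neighborhoods: for $\delta\in S$ and $\epsilon<\delta$, write $N^\delta_\epsilon:=((A_\delta\setminus\epsilon)\times\{0\})\cup\{(\delta,1)\}$, so that $\{N^\delta_\epsilon\mid\epsilon<\delta\}$ is a neighborhood basis at the non-isolated point $(\delta,1)$, while every point $(\alpha,0)$ is isolated with $\{(\alpha,0)\}$ its own basic neighborhood. The single structural observation driving everything is that, for distinct $\gamma,\delta\in S$, the basic neighborhoods $N^\gamma_{\epsilon}$ and $N^\delta_{\epsilon'}$ are disjoint precisely when the end segments $A_\gamma\setminus\epsilon$ and $A_\delta\setminus\epsilon'$ of the two ladders are disjoint, since the apexes $(\gamma,1)$ and $(\delta,1)$ differ.

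For $(1)\implies(3)$ I would argue the contrapositive. Suppose (3) fails, so there are $\gamma<\delta$ in $S$ with $\sup(A_\gamma\cap A_\delta)\ge\gamma$; as $A_\gamma\s\gamma$ forces $\sup(A_\gamma\cap A_\delta)\le\gamma$, this means $A_\gamma\cap A_\delta$ is cofinal in $\gamma$. Consequently every end segment of $A_\gamma$ meets every end segment of $A_\delta$ in a tail of $A_\gamma\cap A_\delta$. Since any open neighborhoods of $(\gamma,1)$ and $(\delta,1)$ respectively contain some $N^\gamma_\epsilon$ and some $N^\delta_{\epsilon'}$, by the observation above they must intersect, so $(\gamma,1)$ and $(\delta,1)$ cannot be separated and $X_{\vec L}$ is not Hausdorff.

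For $(3)\implies(2)$ I would first establish Hausdorffness by inspecting the three types of pairs: two isolated points are separated by singletons; an isolated $(\alpha,0)$ and a non-isolated $(\delta,1)$ are separated by $\{(\alpha,0)\}$ and any $N^\delta_\epsilon$ with $\epsilon>\alpha$ (or any $N^\delta_\epsilon$ when $\alpha\ge\delta$, since then $\alpha\notin A_\delta$); and two non-isolated points $(\gamma,1),(\delta,1)$ with $\gamma<\delta$ are separated by $N^\gamma_\epsilon$ and $N^\delta_\epsilon$ for $\epsilon>\sup(A_\gamma\cap A_\delta)$, which is legitimate by (3). Regularity I would deduce from zero-dimensionality: the plan is to show that under (3) every $N^\delta_\epsilon$ is in fact \emph{clopen}. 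Openness is built in, so the work is to show the complement is open, i.e.\ that every other non-isolated point $(\delta',1)$ has a basic neighborhood missing $N^\delta_\epsilon$; for this I would invoke (3) to get $\sup(A_\delta\cap A_{\delta'})<\min(\delta,\delta')$ and then take $N^{\delta'}_{\epsilon'}$ with $\epsilon'>\sup(A_\delta\cap A_{\delta'})$, so that $A_{\delta'}\setminus\epsilon'$ is disjoint from $A_\delta$ and hence from $A_\delta\setminus\epsilon$. Together with the fact that singletons of isolated points are clopen, this gives a clopen basis; since $X_{\vec L}$ is also $T_1$, it is regular.

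The routine parts are the two trivial separation cases and the bookkeeping with end segments; the genuine content, and the one step I expect to be the main obstacle, is the verification that the basic neighborhoods $N^\delta_\epsilon$ are closed. This is exactly the place where hypothesis (3) is used in its full strength — to simultaneously push every competing ladder $A_{\delta'}$ off of $A_\delta$ — and it is what upgrades mere Hausdorffness to zero-dimensionality and hence to regularity.
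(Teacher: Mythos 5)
Your cycle $(2)\Rightarrow(1)$ and $(3)\Rightarrow(2)$ is sound, and the $(3)\Rightarrow(2)$ leg is the one that matters: the paper states this Fact as folklore with no proof at all, and the only direction it ever invokes is that (3) yields a regular Hausdorff space (applied to $\mu$-bounded systems over subsets of $E^\kappa_\mu$, where (3) holds automatically because $A_\delta\cap\gamma$ is a proper initial segment of $A_\delta$ of order type $<\mu=\cf(\gamma)$). Your route there --- showing under (3) that every basic neighbourhood $N^\delta_\epsilon$ is clopen, so that the space is zero-dimensional and hence regular --- is correct, including the uniformity point you flag: (3) lets you push \emph{every} competing ladder $A_{\delta'}$ off an end segment of itself, which is exactly what closedness of $N^\delta_\epsilon$ requires.

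The leg $(1)\Rightarrow(3)$, however, has a genuine gap. You assert that if $A_\gamma\cap A_\delta$ is cofinal in $\gamma$ (for $\gamma<\delta$ in $S$) then every end segment of $A_\gamma$ meets every end segment of $A_\delta$. This is false: since $A_\delta$ is cofinal in $\delta>\gamma$, the set $A_\delta\setminus\gamma$ is a nonempty end segment of $A_\delta$, and it is disjoint from $A_\gamma\subseteq\gamma$; hence $N^\delta_\gamma$ and $N^\gamma_0$ are disjoint and the two apex points are separated regardless of condition (3). Indeed, running your own three-case analysis with this choice in the third case shows that $X_{\vec L}$ is Hausdorff for \emph{every} ladder system, so no repair of $(1)\Rightarrow(3)$ along these lines is possible: the separation property that genuinely hinges on (3) is regularity (equivalently, closedness of the basic neighbourhoods), not Hausdorffness, and even for regularity the failure of (3) at a single pair $\gamma<\delta$ is harmless (one simply shrinks to $N^\delta_{\epsilon}$ with $\epsilon\ge\gamma$); only a cofinal-in-$\delta$ family of bad $\gamma$'s obstructs it. So the one direction your write-up identifies as ``routine'' is in fact the broken one, while the direction you identify as the main obstacle is correct and is the only part of the Fact the paper uses.
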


In particular, if $\vec L$ is $\omega$-bounded, then $X_{\vec L}$ is Hausdorff and regular. More generally,
for every $\mu$-bounded ladder system $\vec L$ over a subset of $E^\kappa_\mu$,  it is the case that $X_{\vec L}$ is Hausdorff and regular.
A second basic fact will be needed.
Namely, by \cite[Proposition~4.1]{leiderman2023deltaspaces}
and a straight-forward generalisation of \cite[Claim~1]{MR2099600}, we have the following characterisation.

\begin{fact}\label{thm31} For a ladder system $\vec L=\langle A_\delta\mid\delta\in S\rangle$, all of following are equivalent:
\begin{enumerate}
\item $X_{\vec L}$ is a $\Delta$-space;
\item $X_{\vec L}$ is countably metacompact;
\item For every function $g:S \rightarrow \omega$, there is a function $f: \kappa \rightarrow \omega$, such that, for every $\delta \in S$, $\sup\{\alpha\in A_\delta \mid f(\alpha) \leq g(\delta)\} <\delta$.
\end{enumerate}
\end{fact}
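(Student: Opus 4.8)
The plan is to establish the cycle of implications $(2)\Rightarrow(3)\Rightarrow(1)\Rightarrow(2)$, treating the equivalence $(1)\iff(2)$ as the ``soft'' part (it is the cited \cite[Proposition~4.1]{leiderman2023deltaspaces}, since every $\Delta$-space is countably metacompact by definition, and for spaces of the form $X_{\vec L}$ the converse holds), and concentrating the real work on the combinatorial reformulation $(3)$. The key observation driving the whole argument is a dictionary between the topological data and the arithmetic of a single function $f\colon\kappa\to\omega$: an open neighborhood of an isolated point $(\alpha,0)$ is just $\{(\alpha,0)\}$, while a basic neighborhood of a limit point $(\delta,1)$ is determined by an end segment of $A_\delta$. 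Thus a decreasing sequence of open sets whose $n$-th term must contain a prescribed family of limit points translates precisely into a ``tail-catching'' condition on the ladder rungs.

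First I would prove $(2)\Rightarrow(3)$. Given $g\colon S\to\omega$, I set $D_n:=\{(\delta,1)\mid \delta\in S,\ g(\delta)\ge n\}$. These are closed in $X_{\vec L}$ (a limit point's every neighborhood meets the uncountably many isolated points below it, so no isolated point lies in the closure, and the relevant limit points are already included), they are decreasing in $n$, and their intersection is empty because $g(\delta)<\omega$ for each $\delta$. Countable metacompactness yields decreasing open $U_n\supseteq D_n$ with $\bigcap_n U_n=\emptyset$. For each isolated point $(\alpha,0)$, emptiness of the intersection gives a least $n$ with $(\alpha,0)\notin U_n$; define $f(\alpha):=n$. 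The point is that $(\delta,1)\in D_{g(\delta)}\s U_{g(\delta)}$, so $U_{g(\delta)}$ contains an end segment of $A_\delta\times\{0\}$; every $\alpha$ in that end segment therefore satisfies $f(\alpha)>g(\delta)$, which forces $\sup\{\alpha\in A_\delta\mid f(\alpha)\le g(\delta)\}<\delta$, exactly clause $(3)$.

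For $(3)\Rightarrow(1)$ I would run the construction in reverse. Given a decreasing sequence $\langle D_n\mid n<\omega\rangle$ of subsets of $X_{\vec L}$ with empty intersection, the isolated points cause no trouble (a singleton isolated point is open, so I can absorb them directly); the content is in the limit points. For each $\delta\in S$ I let $g(\delta)$ be the least $n$ such that $(\delta,1)\notin D_n$ (which exists since $\bigcap_n D_n=\emptyset$), and I apply $(3)$ to obtain $f\colon\kappa\to\omega$ with $\sup\{\alpha\in A_\delta\mid f(\alpha)\le g(\delta)\}<\delta$ for all $\delta$. I then define $U_n$ to consist of all isolated $(\alpha,0)$ with $f(\alpha)\ge n$ that lie in some $D_m$, together with all limit $(\delta,1)$ with $g(\delta)> n$, equipped at each such $(\delta,1)$ with the end segment of $A_\delta$ beyond the bound supplied by $(3)$; one checks the $U_n$ are open, decreasing, contain $D_n$, and have empty intersection. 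The main obstacle here is bookkeeping the neighborhoods so that $U_n$ is genuinely open (each included limit point must carry a full basic neighborhood inside $U_n$) while simultaneously keeping $D_n\s U_n$; this is where clause $(3)$ is used in an essential way, since it is precisely the statement that the ``bad'' tail of each rung can be cut off below $\delta$, leaving room for a legitimate open neighborhood. I expect the verification that $\bigcap_n U_n=\emptyset$ to be the most delicate point, as it requires that no isolated point survives in all $U_n$ (guaranteed by $f(\alpha)<\omega$) and no limit point survives (guaranteed by $g(\delta)<\omega$).

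Because the whole statement is attributed to existing work (\cite{leiderman2023deltaspaces} and a generalization of \cite[Claim~1]{MR2099600}), I would keep the presentation brief, citing those sources for $(1)\iff(2)$ and giving the explicit $g\leftrightarrow f$ translation only for $(2)\Rightarrow(3)$ and $(3)\Rightarrow(2)$, which is the reformulation actually used later in the topological applications.
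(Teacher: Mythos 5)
The paper does not actually prove this Fact; it is quoted with citations to \cite[Proposition~4.1]{leiderman2023deltaspaces} and to a generalisation of \cite[Claim~1]{MR2099600}, so the only basis for assessment is the internal correctness of your argument. Your overall plan --- $(1)\Rightarrow(2)$ trivially from the definitions, $(2)\Rightarrow(3)$ by encoding $g$ as the closed decreasing sequence $D_n=\{(\delta,1)\mid g(\delta)\ge n\}$ and reading $f(\alpha)$ off as the least $n$ with $(\alpha,0)\notin U_n$, and $(3)\Rightarrow(1)$ by the reverse translation --- is the right one, and the $(2)\Rightarrow(3)$ direction is correct as written.

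There is, however, a concrete flaw in your definition of $U_n$ in the $(3)\Rightarrow(1)$ direction: you place in $U_n$ only those isolated points $(\alpha,0)$ with $f(\alpha)\ge n$ that lie in some $D_m$, and with that definition the required containment $D_n\subseteq U_n$ fails, since $D_n$ may well contain isolated points $(\alpha,0)$ with $f(\alpha)<n$ (the function $f$ produced by clause~$(3)$ knows nothing about the isolated points of the $D_n$'s). The repair is exactly the move you gesture at when you say isolated points can be ``absorbed directly'': take
\[
U_n:=\bigl(D_n\cap(\kappa\times\{0\})\bigr)\cup\{(\alpha,0)\mid f(\alpha)>n\}\cup\{(\delta,1)\mid g(\delta)>n\}.
\]
Then $D_n\subseteq U_n$ (a limit point $(\delta,1)$ lies in $D_n$ iff $g(\delta)>n$, by minimality of $g(\delta)$ and the monotonicity of the $D_n$); the sequence is decreasing; $U_n$ is open because for $(\delta,1)$ with $g(\delta)>n$ the end segment of $A_\delta$ above $\sup\{\alpha\in A_\delta\mid f(\alpha)\le g(\delta)\}$ consists of points with $f(\alpha)>g(\delta)>n$; and $\bigcap_n U_n=\emptyset$ because $f$ and $g$ take finite values and $\bigcap_n D_n=\emptyset$. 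With this correction your proof goes through.
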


\begin{remark} The above characterisation makes it clear that an $\omega$-bounded ladder system $\vec L$ over $\omega_1$ for which $X_{\vec L}$ is not countably metacompact
can be constructed from a gallery of hypotheses. To mention just two, by \cite[Theorem~3.7]{paper23}
such a ladder system exists assuming $\clubsuit$,
and by the proof of \cite[Corollary~4.6]{paper54} such a ladder system exists assuming $\diamondsuit(\mathfrak b)$.
\end{remark}

\begin{lemma}\label{midapp} Suppose that $\mu<\kappa$ is a pair of regular uncountable cardinals,
and that $\vec L$ is a $\mu$-bounded ladder system over $E^\kappa_\mu$ such that $\Phi(\vec L,\omega,\omega)$ holds. Then $X_{\vec L}$ is a regular Hausdorff space that is not countably metacompact.
\end{lemma}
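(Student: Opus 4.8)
The plan is to lean entirely on the combinatorial characterisation of countable metacompactness in Fact~\ref{thm31}. That $X_{\vec L}$ is regular and Hausdorff is immediate from the paragraph following Fact~\ref{fact32}, since $\vec L$ is a $\mu$-bounded ladder system over a subset of $E^\kappa_\mu$. It therefore remains to refute Clause~(3): I must produce a single function $g:E^\kappa_\mu\to\omega$ such that for \emph{every} $f:\kappa\to\omega$ there is some $\delta\in E^\kappa_\mu$ for which $\{\alpha\in A_\delta\mid f(\alpha)\le g(\delta)\}$ is cofinal in $\delta$ (equivalently, has supremum $\delta$).

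The key observation is a pigeonhole fact that exploits the gap between $\mu$ and $\omega$. Since $\vec L$ is $\mu$-bounded and each $A_\delta$ is cofinal in $\delta\in E^\kappa_\mu$, we have $\otp(A_\delta)=\mu$; as $\mu$ is uncountable and regular, for every $\bar f\in{}^{A_\delta}\omega$ there is a least $n<\omega$ such that $\{\alpha\in A_\delta\mid \bar f(\alpha)=n\}$ has size $\mu$, and any subset of $A_\delta$ of size $\mu$ is automatically cofinal in $\delta$ (a bounded subset would have order type, hence size, less than $\mu$). I would accordingly define a colouring $F:\bigcup_{\delta\in E^\kappa_\mu}{}^{A_\delta}\omega\to\omega$ by letting $F(\bar f)$ be this least $n$; note that the relevant $\delta$ is recoverable as $\sup(\dom(\bar f))$, so $F$ is genuinely well defined on its domain.

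Applying $\Phi(\vec L,\omega,\omega)$ to $F$ furnishes a function $g:E^\kappa_\mu\to\omega$ such that for every $f:\kappa\to\omega$ there are stationarily many---in particular, at least one---$\delta$ with $F(f\restriction A_\delta)=g(\delta)$. For such a $\delta$, writing $n:=g(\delta)=F(f\restriction A_\delta)$, the definition of $F$ gives that $\{\alpha\in A_\delta\mid f(\alpha)=n\}$ is cofinal in $\delta$, hence so is its superset $\{\alpha\in A_\delta\mid f(\alpha)\le g(\delta)\}$. This exhibits the failure of Clause~(3) for this particular $g$, so $X_{\vec L}$ is not countably metacompact. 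The only delicate point is the totality of $F$, which rests squarely on $\mu>\omega$ being regular together with the recovery of $\delta$ from $\dom(\bar f)$; everything else is bookkeeping, and only the nonemptiness (not the full stationarity) of the guessing set provided by $\Phi$ is used.
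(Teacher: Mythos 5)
Your proposal is correct and follows essentially the same route as the paper: regularity and Hausdorffness from Fact~\ref{fact32}, then defining a colouring $F$ on $\bigcup_{\delta}{}^{A_\delta}\omega$ that picks out a colour whose fiber is cofinal in $\delta$ (guaranteed by $\cf(\delta)=\mu>\omega$), applying $\Phi(\vec L,\omega,\omega)$ to get $g$, and refuting Clause~(3) of Fact~\ref{thm31}. The only cosmetic difference is that the paper defines $F(\bar f)$ as the least $n$ whose fiber has supremum $\delta$ rather than the least $n$ whose fiber has size $\mu$; for a $\mu$-bounded ladder over $E^\kappa_\mu$ these conditions coincide.
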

\begin{proof}
Since $\vec L$ is a $\mu$-bounded ladder system over $E^\kappa_\mu$,
Fact~\ref{fact32} implies that $X_{\vec L}$ is regular and Hausdorff.
Write $\vec L$ as $\langle A_\delta \mid \delta \in E^\kappa_\mu\rangle$.
Define a function $F:(\bigcup_{\delta\in S}{}^{A_\delta}\omega)\rightarrow\omega$
be letting for all $\delta\in S$ and $ f:A_\delta\rightarrow\omega$,
$$F( f):=\min\{n<\omega\mid \sup\{\alpha\in A_\delta \mid f(\alpha) =n\} =\delta\}.$$
Since $\Phi(\vec L,\omega,\omega)$ holds,
we may now fix a function $g:S\rightarrow\omega$ such that, for every function $f:\kappa\rightarrow\omega$,
there are stationarily many $\delta\in S$ such that $F(f\restriction A_\delta)= g(\delta)$.
In particular,  for every function $f:\kappa\rightarrow\omega$ there are stationarily many $\delta\in S$ such that $\sup\{\alpha\in A_\delta \mid f(\alpha) =g(\delta)\} =\delta$.
So, by Fact~\ref{thm31}, $X_{\vec L}$ is not countably metacompact.
\end{proof}

We are now ready to prove Theorem~\ref{thma}.
Indeed, it follows by taking $\Lambda=\lambda=\beth_\omega$ in the next theorem.

\begin{cor}\label{thm34}
Suppose that $\Lambda\le\lambda$ is a pair of uncountable cardinals such that $\Lambda$ is a strong limit.
Denote $\kappa := \cf(2^\lambda)$.
Then there are co-boundedly many $\mu\in\reg(\Lambda)$ such that $E^\kappa_\mu$
carries a $\mu$-bounded ladder system ${\vec L}$ such that  $X_{\vec L}$ is a regular Hausdorff space that is not countably metacompact.
\end{cor}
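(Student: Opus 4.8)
The plan is to chain Corollary~\ref{cor28} with Lemma~\ref{midapp}, the only genuine work being to reconcile the guessing parameters and to pass from a ladder system defined on a stationary $S\s E^\kappa_\mu$ to one defined on all of $E^\kappa_\mu$. Since König's theorem gives $\kappa=\cf(2^\lambda)>\lambda\ge\Lambda$, the cardinal $\kappa$ is regular and uncountable, and every $\mu\in\reg(\Lambda)$ satisfies $\mu<\Lambda<\kappa$. Discarding the single value $\mu=\omega$ costs nothing, so it suffices to produce the desired ladder system for co-boundedly many \emph{uncountable} $\mu\in\reg(\Lambda)$.

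First I would invoke Corollary~\ref{cor28} to fix, for co-boundedly many $\mu\in\reg(\Lambda)$, a $\mu$-bounded $C$-sequence $\vec C=\langle C_\delta\mid\delta\in S\rangle$ over a stationary $S\s E^\kappa_\mu$ with $\diamondsuit(\vec C,\mu)$. By the implication $\diamondsuit(\vec L,\mu)\implies\Phi(\vec L,\mu,\theta)$ recorded just before Lemma~\ref{l210}, taking $\theta:=\omega$ yields $\Phi(\vec C,\mu,\omega)$. Next I would use that $\Phi$ is downward monotone in its first coordinate: given $F:\bigcup_{\delta\in S}{}^{C_\delta}\omega\rightarrow\omega$, extend it to $\hat F:\bigcup_{\delta\in S}{}^{C_\delta}\mu\rightarrow\omega$ by setting $\hat F=F$ on inputs with range $\s\omega$ and $\hat F=0$ otherwise, apply $\Phi(\vec C,\mu,\omega)$ to $\hat F$ to obtain $g:S\rightarrow\omega$, and observe that any $f:\kappa\rightarrow\omega$ is in particular a map into $\mu$ with $\hat F(f\restriction C_\delta)=F(f\restriction C_\delta)$. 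Hence $\Phi(\vec C,\omega,\omega)$ holds.

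I would then extend $\vec C$ to a $\mu$-bounded ladder system $\vec L=\langle A_\delta\mid\delta\in E^\kappa_\mu\rangle$ by putting $A_\delta:=C_\delta$ for $\delta\in S$ and, for $\delta\in E^\kappa_\mu\setminus S$, letting $A_\delta$ be any cofinal subset of $\delta$ of order-type $\le\mu$ (such exists since $\cf(\delta)=\mu$). To see $\Phi(\vec L,\omega,\omega)$, take $F:\bigcup_{\delta\in E^\kappa_\mu}{}^{A_\delta}\omega\rightarrow\omega$, restrict it to $\bigcup_{\delta\in S}{}^{C_\delta}\omega$, apply $\Phi(\vec C,\omega,\omega)$ to get $g:S\rightarrow\omega$, and extend $g$ arbitrarily to $E^\kappa_\mu$. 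For any $f:\kappa\rightarrow\omega$ there are stationarily many $\delta\in S$ with $F(f\restriction A_\delta)=g(\delta)$; since a stationary subset of $S$ is a stationary subset of $\kappa$ contained in $E^\kappa_\mu$, this is precisely the conclusion of $\Phi(\vec L,\omega,\omega)$.

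Finally, as $\mu$ and $\kappa$ are regular uncountable with $\mu<\kappa$, and $\vec L$ is a $\mu$-bounded ladder system over $E^\kappa_\mu$ for which $\Phi(\vec L,\omega,\omega)$ holds, Lemma~\ref{midapp} gives that $X_{\vec L}$ is a regular Hausdorff space that is not countably metacompact, completing the argument. There is no real obstacle beyond bookkeeping: all the substance sits in Corollary~\ref{cor28} (hence in the revised $\gch$ and $\onto$ machinery of Section~\ref{subsec21}) and in Lemma~\ref{midapp}. The step meriting the most care is the passage from $\diamondsuit(\vec C,\mu)$ to $\Phi(\vec L,\omega,\omega)$, where one must confirm both the downward monotonicity of $\Phi$ in its parameters and that extending the ladder system off $S$ does not spoil the guessing property.
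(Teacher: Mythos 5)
Your proposal is correct and follows essentially the same route as the paper: chain Corollary~\ref{cor28} with Lemma~\ref{midapp}, passing through $\Phi(\vec L,\omega,\omega)$. The paper compresses the bookkeeping (it quotes Corollary~\ref{cor28} as already giving a ladder system on all of $E^\kappa_\mu$ and goes via $\diamondsuit(\vec L,\omega)\implies\Phi(\vec L,\omega,\omega)$ rather than via monotonicity of $\Phi$ in its first parameter), but your explicit treatment of the extension off the stationary set $S$ and of the parameter adjustments is exactly the content being elided.
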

\begin{proof}  By Corollary~\ref{cor28},
there are co-boundedly many uncountable $\mu\in\reg(\Lambda)$,
for which there exists a $\mu$-bounded ladder system $\vec L$ over $E^\kappa_\mu$ such that $\diamondsuit(\vec L, \omega)$ holds, in particular, $\Phi(\vec L, \omega,\omega)$ holds. Now, appeal to Lemma~\ref{midapp}.
\end{proof}

\begin{thm} If $\kappa:=2^{2^{\aleph_1}}$ is smaller than $\aleph_{\omega_1}$,
then there exists an $\omega_1$-bounded
ladder system $\vec L$
over $E^\kappa_{\omega_1}$
for which $X_{\vec L}$ is a regular Hausdorff space that is not countably metacompact.
\end{thm}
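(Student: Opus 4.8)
The plan is to run the machinery of Section~\ref{sec2} with the parameters $\mu:=\aleph_1$, $\lambda:=2^{\aleph_1}$ and $\theta:=\aleph_1$, so that $\kappa=2^\lambda=2^{2^{\aleph_1}}$, but to stop short of extracting a full narrow diamond. The reason is that Lemma~\ref{midapp} asks only for $\Phi(\vec L,\omega,\omega)$, and aiming at this weaker target is precisely what lets us work one exponential below Theorem~\ref{thm211}. The governing arithmetical observation is that, since $\cf(\aleph_1)=\aleph_1$, we have $\aleph_1^{+\cf(\aleph_1)}=\aleph_{\aleph_1}=\aleph_{\omega_1}$, so the hypothesis $\kappa<\aleph_{\omega_1}$ places every cardinal in sight safely inside the interval on which Fact~\ref{meetingfact} applies with $\theta=\aleph_1$.

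First I would record that $\kappa$ is regular: writing $\kappa=\aleph_\beta$ with $\beta<\omega_1$ (using $\kappa<\aleph_{\omega_1}$), a limit $\beta$ would force $\cf(\kappa)=\cf(\beta)\le\aleph_0$, contradicting K\"onig's lemma, which gives $\cf(\kappa)=\cf(2^\lambda)>\lambda$; hence $\beta$ is a successor and $\kappa$ is regular, exactly as in Theorem~\ref{thm211}. Next, for every infinite $\varkappa<\kappa$ we have $\aleph_1\le\varkappa<\aleph_{\omega_1}=\aleph_1^{+\cf(\aleph_1)}$, so Fact~\ref{meetingfact} yields $m(\varkappa,\aleph_1)=\varkappa<\kappa=2^\lambda$. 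This is exactly the working hypothesis isolated right after Claim~\ref{c261} (that $\mu:=\aleph_1$ is regular below $\lambda$ and $m(\varkappa,\mu)<2^\lambda$ for all $\varkappa<2^\lambda$), so Lemma~\ref{countinglemma} supplies an $\aleph_1$-bounded $C$-sequence $\vec C$ over a stationary $S\s E^\kappa_{\omega_1}$ with $\diamondsuit^*(\vec C,2^\lambda,2^\lambda)$, and in particular $\diamondsuit(\vec C,2^\lambda,2^\lambda)$.

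Now I would turn on the colouring principle with the crucial choice $\theta=\aleph_1$ rather than $\theta=2^\mu$. Since $2^{\aleph_1}=\lambda$ and $\aleph_1\le\lambda<\aleph_{\omega_1}=\aleph_1^{+\cf(\aleph_1)}$, Fact~\ref{meetingfact} gives $m(\lambda,\aleph_1)=\lambda$, whence $2^\theta=2^{\aleph_1}=\lambda\le m(\lambda,\theta)=\lambda$, and Lemma~\ref{lemma114} yields $\onto(\{\lambda\},[2^\lambda]^{\le\lambda},\aleph_1)$. Feeding $\diamondsuit(\vec C,2^\lambda,2^\lambda)$ together with this instance of $\onto$ into Lemma~\ref{lemma115} (whose remaining hypothesis $\lambda<\kappa$ is Cantor's theorem) produces $\Phi(\vec C,2^\lambda,\aleph_1)$. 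By the routine monotonicity of $\Phi$ in both parameters, $\Phi(\vec C,2^\lambda,\aleph_1)$ implies $\Phi(\vec C,\omega,\omega)$.

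Finally, I would extend $\vec C$ to an $\aleph_1$-bounded ladder system $\vec L$ over all of $E^\kappa_{\omega_1}$ by attaching to each point of $E^\kappa_{\omega_1}\setminus S$ an arbitrary cofinal subset of order type $\omega_1$; since the guessing in $\Phi$ ranges over the stationary set $S$, the resulting $\vec L$ still satisfies $\Phi(\vec L,\omega,\omega)$. As $\mu=\omega_1<\kappa$ are regular and uncountable and $\vec L$ is $\mu$-bounded over $E^\kappa_\mu$, Lemma~\ref{midapp} concludes that $X_{\vec L}$ is a regular Hausdorff space that is not countably metacompact. The main obstacle --- and really the whole point --- is the choice of parameters: one must notice that a full $\diamondsuit(\vec L,\aleph_1)$ is unavailable at $\kappa=2^{2^{\aleph_1}}$ (it would require guessing $\aleph_1$-valued functions on $\aleph_1$-sized ladders, i.e.\ $2^{\aleph_1}$ colours in the intermediate $\Phi$ via Lemma~\ref{l210}), whereas the far coarser $\Phi(\vec L,\omega,\omega)$ demanded by Lemma~\ref{midapp} is reachable only because taking $\theta=\aleph_1$ keeps $m(\lambda,\aleph_1)=\lambda$ valid for $\lambda$ all the way up to $\aleph_{\omega_1}$.
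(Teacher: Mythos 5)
Your proposal is correct and follows essentially the same route as the paper's own proof: establish the working hypothesis $m(\varkappa,\aleph_1)=\varkappa<2^\lambda$ via Fact~\ref{meetingfact}, obtain $\diamondsuit^*(\vec C,2^\lambda,2^\lambda)$ as after Claim~\ref{c261}, feed $\onto(\{\lambda\},[2^\lambda]^{\le\lambda},\aleph_1)$ from Lemma~\ref{lemma114} into Lemma~\ref{lemma115} to get $\Phi(\vec C,2^\lambda,\aleph_1)$, and finish with Lemma~\ref{midapp}. The only cosmetic differences are that the paper justifies the hypothesis of Lemma~\ref{lemma114} by $\lambda^{\aleph_1}=\lambda$ rather than by a second appeal to Fact~\ref{meetingfact}, and that your quantifier ``every infinite $\varkappa<\kappa$'' should be restricted to $\varkappa\in[\aleph_1,2^\lambda)$ as in the paper.
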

\begin{proof}  Denote $\mu:=\aleph_1$ and $\lambda:=2^\mu$. Note that $\aleph_1<\lambda<\cf(\kappa)\le\kappa<\aleph_{\omega_1}$ and hence $\kappa$ is regular.
As $2^\lambda<\mu^{+\mu}$, for every cardinal $\varkappa\in[\mu,2^\lambda)$, it is the case that $\mu\le\varkappa<\mu^{+\mu}$,
and so Fact~\ref{meetingfact} implies that $m(\varkappa, \mu) = \varkappa<2^\lambda$.
As made clear right after Claim~\ref{c261},
we then get a $\mu$-bounded $C$-sequence $\vec C$ over some stationary $S\s E^\kappa_\mu$ such that $\diamondsuit^*(\vec C, 2^\lambda, 2^\lambda)$ holds.
In particular, there is a $\mu$-bounded $C$-sequence $\vec C$ over $E^\kappa_\mu$ such that $\diamondsuit(\vec C, 2^\lambda, 2^\lambda)$ holds.
As $\lambda^\mu=\lambda$, Lemma~\ref{lemma114} implies that $\onto(\{\lambda\},[2^\lambda]^{\le\lambda},\mu)$ holds.
Then, by Lemma~\ref{lemma115}, $\Phi(\vec C,2^\lambda,\mu)$ holds.
Now, appeal to Lemma~\ref{midapp}.
\end{proof}

We are also in a condition to prove Theorem~\ref{thmb}.
\begin{cor}\label{cor47}
If $\kappa:=2^{2^{2^{\aleph_0}}}$ is smaller than $\aleph_\omega$,
then there exists an $\omega$-bounded
ladder system $\vec L$
over $E^\kappa_{\omega}$
for which $X_{\vec L}$ is a regular Hausdorff space that is not countably metacompact.
\end{cor}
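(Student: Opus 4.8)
The plan is to prove this by invoking Theorem~\ref{thm32} with $n:=1$ and $\Omega:=\mu:=\theta:=\omega$, rather than routing through $\Phi(\vec L,\omega,\omega)$ and Lemma~\ref{midapp}. The point is that Lemma~\ref{midapp} is restricted to \emph{uncountable} $\mu$ for a genuine reason: when $\mu=\omega$ a function $f\colon\kappa\to\omega$ may tend to infinity along an entire $\omega$-ladder $A_\delta$, and then \emph{no} finite value $g(\delta)$ can make $\{\alpha\in A_\delta\mid f(\alpha)\le g(\delta)\}$ cofinal in $\delta$. Hence the $\liminf$-style definition of the function $F$ in the proof of Lemma~\ref{midapp} is not even well defined at $\omega$, and a bare instance of $\diamondsuit(\vec L,\omega)$ (equivalently $\Phi(\vec L,\omega,\omega)$) does not suffice. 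What rescues the argument is exactly the Ramsey-theoretic feature of Theorem~\ref{thm32}, already flagged in the remark preceding Section~\ref{sec1}: there the guessing is by \emph{constant} maps, so that on the relevant $\delta$ the whole of $A_\delta$ is mapped by $f$ to the single value $g(\delta)$, and the $\liminf$ obstruction evaporates.

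Concretely, with the above parameters one computes $\kappa=\beth_{n+2}(\theta)=\beth_3(\omega)=2^{2^{2^{\aleph_0}}}$, and the hypothesis $\kappa<\Omega^{+\omega}=\aleph_\omega$ of Theorem~\ref{thm32} is precisely the standing assumption $2^{2^{2^{\aleph_0}}}<\aleph_\omega$. Taking the stationary set $T:=\kappa$, Theorem~\ref{thm32} then hands us a map $g\colon E^\kappa_\omega\to\omega$, a partition $\langle S_j\mid j<\kappa\rangle$ of $E^\kappa_\omega$ into stationary sets, and an $\omega$-bounded ladder system $\vec L=\langle A_\delta\mid\delta\in E^\kappa_\omega\rangle$ such that for every club $D\s\kappa$ and every $f\colon\kappa\to\omega$ there is a $\delta\in S_0$ with $A_\delta\s D$ and $f``A_\delta=\{g(\delta)\}$. (We need only a single piece $S_0$ of the partition here; the full strength of clause~(2) is not used.)

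The remaining step is to read off the failure of clause~(3) of Fact~\ref{thm31} for this very $g$. Fix any $f\colon\kappa\to\omega$ and any club $D\s\kappa$; the displayed conclusion yields $\delta\in S_0\s E^\kappa_\omega$ with $A_\delta\s D$ and $f``A_\delta=\{g(\delta)\}$, and since $A_\delta$ is cofinal in $\delta$ and $D$ is closed we get $\delta\in D$. Thus $\{\delta\in E^\kappa_\omega\mid f``A_\delta=\{g(\delta)\}\}$ meets every club and is stationary, and for each such $\delta$ the equality $f\restriction A_\delta\equiv g(\delta)$ gives $\{\alpha\in A_\delta\mid f(\alpha)\le g(\delta)\}=A_\delta$, so that $\sup\{\alpha\in A_\delta\mid f(\alpha)\le g(\delta)\}=\delta$. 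Hence no $f$ satisfies the requirement of Fact~\ref{thm31}(3) for this $g$, and by that fact $X_{\vec L}$ is not countably metacompact; since $\vec L$ is $\omega$-bounded over $E^\kappa_\omega$, Fact~\ref{fact32} simultaneously guarantees that $X_{\vec L}$ is regular and Hausdorff.

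The crux I would flag is solely the use of the constant-map guessing of Theorem~\ref{thm32}; everything after it is bookkeeping. This is also the reason one must appeal to Theorem~\ref{thm32} directly rather than to its Corollary~\ref{cor33}: the latter carries the blanket assumption that $\aleph_\omega$ is a strong limit, which is strictly stronger than the arithmetic hypothesis $2^{2^{2^{\aleph_0}}}<\aleph_\omega$ that is all we are given here.
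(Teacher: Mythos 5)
Your proposal is correct and follows essentially the same route as the paper: the paper likewise proves Corollary~\ref{cor47} by applying Theorem~\ref{thm32} with $(n,\Omega,\mu,\theta)=(1,\omega,\omega,\omega)$ and then concluding via Facts~\ref{fact32} and~\ref{thm31}. Your additional remarks on why the constant-map (Ramsey-theoretic) guessing is indispensable for $\mu=\omega$ accurately reflect the paper's own commentary in the remark preceding Section~\ref{sec1}.
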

\begin{proof} Suppose that $\kappa:=2^{2^{2^{\aleph_0}}}$ is smaller than $\aleph_\omega$.
Appealing to Theorem~\ref{thm32} with $(n,\Omega,\mu,\theta):=(1,\omega,\omega,\omega)$,
we obtain an $\omega$-bounded ladder system $\vec L=\langle A_\delta\mid\delta\in E^\kappa_\omega\rangle$
and a map $g:\kappa\rightarrow\omega$
such that for every function $f:\kappa\rightarrow\omega$, there are stationarily many $\delta\in E^\kappa_\omega$ such that
$f`` A_\delta=\{g(\delta)\}$.
Since $\vec L$ is a $\mu$-bounded ladder system over $E^\kappa_\mu$,
Fact~\ref{fact32} implies that $X_{\vec L}$ is regular and Hausdorff.
In addition, by Fact~\ref{thm31}, $X_{\vec L}$ is not countably metacompact.
\end{proof}

We conclude this paper by providing a proof of Theorem~\ref{thmc}.
The definition of a regressive tree may be found in \cite[Definition~2.14]{paper48},
and the fact that coherent trees are regressive is easy to be seen.
\begin{thm} If there exists a $\kappa$-Souslin tree $\mathbf T$,
then there exists a ladder system $\vec L$ over some stationary subset of $\kappa$
for which $X_{\vec L}$ is a regular Hausdorff space that is not countably metacompact.
If the tree $\mathbf T$ is regressive, then $\dom(\vec L)=E^\kappa_\omega$ so that $X_{\vec L}$ is moreover first countable.
\end{thm}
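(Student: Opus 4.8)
The plan is to read the required ladder system off of the tree $\mathbf T$ and to refute countable metacompactness through Fact~\ref{thm31}, by showing that any function $f:\kappa\rightarrow\omega$ witnessing Clause~(3) would manufacture either a cofinal branch or a $\kappa$-sized antichain in $\mathbf T$, both of which are forbidden by Souslinity. Throughout, I would first replace $\mathbf T$ by a normal form — a splitting, pruned $\kappa$-Souslin tree whose underlying set is $\kappa$, with $\mathrm{ht}(x)\le x$ for every node $x$ and with $C:=\{\delta<\kappa\mid T\restriction\delta=\delta\}$ a club — these reductions costing nothing and, in the regressive case, being arrangeable so as to preserve regressiveness. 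For a node $t$ at a limit level I write $b_t:=\{t\restriction\beta\mid\beta<\mathrm{ht}(t)\}$ for its branch of predecessors; the normalization $\mathrm{ht}(x)\le x$ together with $T\restriction\delta=\delta$ guarantees that $b_t$ is a \emph{cofinal} subset of $\mathrm{ht}(t)$, viewed as a set of ordinals.

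For the index set I would take, in the regressive case, $S:=E^\kappa_\omega\cap C$ (the finitely-many excess points of $E^\kappa_\omega$ being absorbed at the end). For each $\delta\in S$ I use the regressive (resp.\ coherent) structure to select a \emph{canonical} node $x_\delta\in T_\delta$ together with a cofinal $e_\delta\s\delta$ of order type $\omega$, and set the thinned branch
$$A_\delta:=\{x_\delta\restriction\beta\mid\beta\in e_\delta\}.$$
Then $\vec L=\langle A_\delta\mid\delta\in S\rangle$ is an $\omega$-bounded ladder system over $E^\kappa_\omega$. Here Hausdorffness comes for free: for $\gamma<\delta$ in $S$ the set $A_\delta\cap\gamma$ is finite, since $A_\delta$ enumerates increasingly to $\delta$, whence $A_\gamma\cap A_\delta\s A_\delta\cap\gamma$ is finite and $\sup(A_\gamma\cap A_\delta)<\gamma$; so by Fact~\ref{fact32} the space $X_{\vec L}$ is regular and Hausdorff, and it is first countable because the ladders have order type $\omega$. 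In the non-regressive case the same recipe is run over a stationary set of limit ordinals in $C$ furnished by the (non-canonical) node selection, at the cost of no longer controlling the cofinality, which is exactly why only ``some stationary subset of $\kappa$'' is claimed there.

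It remains to choose $g:S\rightarrow\omega$ and to verify that Clause~(3) of Fact~\ref{thm31} fails. For $\delta\in S$, since $\cf(\delta)=\omega$, for every $f:\kappa\rightarrow\omega$ some least color appears cofinally in $f$ along $b_{x_\delta}$, and $g(\delta)$ is chosen — using the tree to make the choice robust under the canonical selection of the $x_\delta$ — to target this value. To verify non-metacompactness, suppose toward a contradiction that $f:\kappa\rightarrow\omega$ witnesses Fact~\ref{thm31}(3), so that for every $\delta\in S$ the set $\{\beta\in e_\delta\mid f(x_\delta\restriction\beta)\le g(\delta)\}$ is finite; fix $\beta_\delta<\delta$ bounding it. As $\delta\mapsto\beta_\delta$ is regressive, Fodor's lemma together with two further pigeonhole thinnings yields a stationary $S''\s S$, a node $x^*$ (at some level $\beta^*$) and a value $n^*<\omega$ such that $g\restriction S''\equiv n^*$, $x_\delta\restriction\beta^*=x^*$ for all $\delta\in S''$, and $f(x_\delta\restriction\beta)>n^*$ for every $\delta\in S''$ and every $\beta\in e_\delta$ above $\beta^*$. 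Now $\{x_\delta\mid\delta\in S''\}$ consists of $\kappa$-many nodes above $x^*$ at cofinally many distinct levels, so they cannot be pairwise comparable (that would be a cofinal branch) and, being $\kappa$-many, cannot be an antichain; the crux is to exploit the canonical choice of the $x_\delta$ and of $g$ so that this configuration — $\kappa$-many nodes through $x^*$ on all of which $f$ stays above $n^*$ — is refined into a genuine $\kappa$-antichain of $\mathbf T$, the desired contradiction.

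The main obstacle is precisely this last step: \emph{committing to $g$ before seeing $f$}. In contrast with the proofs of Theorems~\ref{thma} and~\ref{thmb}, there is here no diamond available to predict the cofinal color, so all the predictive power must be extracted from the tree. I expect the correct device is to let the coherent/regressive structure pin down the nodes $x_\delta$ (and the levels $e_\delta$) in a way that is \emph{recognizable along every branch}, so that the pressed-down data $(x^*,n^*)$ returned by Fodor's lemma determines an antichain of full size $\kappa$; this is also where the regressiveness forces the construction to concentrate on $E^\kappa_\omega$, delivering first countability, whereas in the general case the loss of uniform recognizability is what confines us to an unspecified stationary set.
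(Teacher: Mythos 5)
There is a genuine gap, and you have correctly located it yourself: the proposal never actually produces the colouring $g$ in advance of $f$. Your candidate definition of $g(\delta)$ as ``the least colour appearing cofinally along $b_{x_\delta}$'' depends on $f$, which is exactly the circularity you flag, and the closing dichotomy --- that $\kappa$-many nodes above $x^*$ must form either a cofinal branch or a $\kappa$-antichain --- is false: a $\kappa$-sized subset of a Souslin tree need be neither a chain nor an antichain, so no contradiction with Souslinity is reached. The Fodor/pressing-down skeleton is reasonable, but the ``crux'' step you defer is the entire content of the theorem, and the tree-theoretic device you hope for is not supplied.

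The paper resolves the commit-to-$g$-first problem not by extracting predictive power from branches directly, but by invoking a guessing principle already distilled from Souslin trees: by \cite[Theorem~2.29]{paper48}, a $\kappa$-Souslin tree yields $\clubsuit_{\ad}(\mathcal S,1,1)$ for a $\kappa$-sized pairwise disjoint family $\mathcal S$ of stationary sets (with $\dom(\vec L)=E^\kappa_\omega$ in the regressive case by \cite[Corollary~2.25(2)]{paper48}). Grouping $\mathcal S$ into countably many pieces $\langle S_n\mid n<\omega\rangle$ and setting $g(\delta):=n$ for $\delta\in S_n$ makes $g$ manifestly independent of $f$: given any $f:\kappa\rightarrow\omega$, some colour class $A:=f^{-1}\{n\}$ is cofinal in $\kappa$, and the guessing property of the ladders indexed by $S_n$ produces $\delta\in S_n$ with $\sup(A_\delta\cap A)=\delta$, refuting Clause~(3) of Fact~\ref{thm31}. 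The almost-disjointness clause of $\clubsuit_{\ad}$ (the subscript $\ad$) delivers $\sup(A_\gamma\cap A_\delta)<\gamma$ and hence regularity and Hausdorffness via Fact~\ref{fact32}. The moral difference is that the ``prediction'' is not of the cofinal colour of $f$ along a fixed ladder, but of which of $\omega$ many disjoint stationary families will catch the cofinal set $f^{-1}\{n\}$ --- a partition trick your sketch is missing and which no amount of canonical node selection will substitute for.
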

\begin{proof}By \cite[Theorem~2.29]{paper48}, the existence of a $\kappa$-Souslin tree $\mathbf T$ implies that $\clubsuit_{\ad}(\mathcal S,1,1)$ holds for some $\kappa$-sized pairwise disjoint family $\mathcal S$ of stationary subsets of $\kappa$.
Denote $S:=\biguplus\mathcal S$.
By \cite[Corollary~2.25(2)]{paper48},  if $\mathbf T$ is regressive, then we may moreover secure that $S=E^\kappa_\omega$.
As $\mathcal S$ is infinite, let $\langle S_n\mid n<\omega\rangle$ be a partition of $S$ in such a way that each $S_n$ covers some set from $\mathcal S$,
so that $\clubsuit_{\ad}(\{S_n\mid n<\omega\},1,1)$ holds.
Finally, the latter means that there exists a ladder system $\vec L=\langle A_\delta\mid \delta\in S\rangle$ such that the following two hold:
\begin{enumerate}[(i)]
\item for every cofinal $A\s\kappa$, for every $n<\omega$, there exists a $\delta\in S_n$ such that $\sup(A_\delta\cap A)=\delta$;
\item for every pair $\gamma<\delta$ of ordinals from $S$, $\sup(A_{\gamma}\cap A_{\delta})<\gamma$.
\end{enumerate}

Now letting $g:S\rightarrow\omega$ describe the partition of $S$,
we get that for every function $f: \kappa \rightarrow \omega$, by picking $n<\omega$ such that $A:=f^{-1}\{n\}$ is cofinal in $\kappa$,
we may find $\delta\in S_n$ such that $\sup(A_\delta\cap A)=\delta$,
and hence $\sup\{\alpha\in A_\delta \mid f(\alpha) =g(\delta)\} =\delta$.
So Clause~(i) implies that $X_{\vec L}$ is not countably metacompact by Fact~\ref{thm31},
and Clause~(ii) ensures that $X_{\vec L}$ is a regular Hausdorff space by Fact~\ref{fact32}.
\end{proof}

\section{Acknowledgments}
We thank Ido Feldman for the combinatorial proof of Claim~\ref{invclaim}. 
We thank Jing Zhang for a discussion on this paper and in particular for isolating Question~\ref{jingquestion}.

The first author was supported by the European Research Council (grant agreement ERC-2018-StG 802756).
The second author was supported by the Israel Science Foundation (grant agreement 665/20).
The third author was partially supported by the Israel Science Foundation (grant agreement 203/22)
and by the European Research Council (grant agreement ERC-2018-StG 802756).
\newcommand{\etalchar}[1]{$^{#1}$}

\end{document}